\documentclass[a4paper]{amsart}
\usepackage{amsmath,amsthm,amssymb,latexsym,epic,bbm,comment,mathrsfs}
\usepackage{graphics,enumerate,stmaryrd,color,tikz}
\usepackage[all,2cell]{xy}
\xyoption{2cell}
\usepackage[most]{tcolorbox}

\newtheorem{theorem}{Theorem}
\newtheorem{lemma}[theorem]{Lemma}
\newtheorem{corollary}[theorem]{Corollary}
\newtheorem{proposition}[theorem]{Proposition}

\newtheorem{remark}[theorem]{Remark}
\usepackage[all]{xy}
\usepackage[active]{srcltx}
\usepackage[parfill]{parskip}
\usepackage{enumerate}

\usepackage{pgfplots}
\usepgfplotslibrary{groupplots}
\pgfplotsset{compat=1.18}

\begin{document}
\title[On the number of extension closed additive subcategories]{On the 
number of extension closed additive subcategories for
uniformly oriented $A_n$ quivers}

\author[V.~Mazorchuk]
{Volodymyr Mazorchuk}

\begin{abstract}
We provide a recurrence for computing the 
terms of the OEIS sequence A393920, 
introduced in \cite{KS}. 
We also describe a surprising connection between 
A393920 and the Fibonacci sequence A000045, 
obtain non-trivial lower and upper exponential 
bounds for its growth, and investigate connections 
with partial orders, Catalan numbers, and convex 
topologies on finite chains.  For the representation-theoretic
lattice underlying A393920, we describe its atoms,
coatoms, join-irreducible and meet-irreducible elements.
\end{abstract}

\maketitle

\section{Description of the results and history of the paper}\label{s1}

\subsection{Combinatorial model}\label{s1.1}

Let $n$ be a non-negative integer. Define $\mathcal{Q}(n)$ as the set of
all pairs $(x,y)\in\mathbb{Z}^2$ satisfying the following conditions:
\begin{displaymath}
0\leq x,\qquad 0\leq y,\qquad x+y\leq n.
\end{displaymath}
Here is a visual depiction of $\mathcal{Q}(n)$
(the points are in {\color{blue}blue}), for $n=0,1,2,3$:

\begin{center}
\begin{tikzpicture}[scale=0.65]
\newcommand{\drawQ}[3]{%
  \begin{scope}[shift={(#2,#3)}]
    \def\n{#1}   
    \node[font=\small] at (1,\n+1) {$\mathcal{Q}(#1)$};
    \draw[->] (-0.15,0) -- (\n+0.7,0);
    \draw[->] (0,-0.15) -- (0,\n+0.7);
    \draw[gray!60,dashed] (0,\n) -- (\n,0);
    \foreach \x in {0,...,#1} {
      \foreach \y in {0,...,#1} {
        \pgfmathtruncatemacro{\s}{\x+\y}
        \ifnum\s>#1
        \else
          \fill[blue] (\x,\y) circle (1.8pt);
        \fi
      }
    }
    \foreach \k in {0,...,#1} {
      \draw (\k,0.04) -- (\k,-0.04);
      \draw (0.04,\k) -- (-0.04,\k);
    }
  \end{scope}
}

\drawQ{0}{0}{0}
\drawQ{1}{3}{0}
\drawQ{2}{6}{0}
\drawQ{3}{10}{0}
\end{tikzpicture}
\end{center}

Define the set $\mathcal{R}(n)$ as the set of all subsets $Y$
of $\mathcal{Q}(n)$ satisfying the following condition,
which we refer to as Condition~$(\star)$ in the paper:
\begin{itemize}
\item[($\star$)] For any $(a,b),(c,d)\in Y$ such that 
$\max(a,c)+\max(b,d)\leq n+1$, we have
$(\min(a,c),\min(b,d))\in Y$
and, additionally, $(\max(a,c),\max(b,d))\in Y$
as soon as $(\max(a,c),\max(b,d))\in \mathcal{Q}(n)$.
\end{itemize}
Set $R_n$ to be the cardinality of $\mathcal{R}(n)$.

For example, $\mathcal{R}(0)=\boldsymbol{2}^{\mathcal{Q}(0)}=\{\varnothing,\{(0,0)\}\}$ while
the only subset of $\mathcal{Q}(1)$ which does not belong to
$\mathcal{R}(1)$ is $\{(0,1),(1,0)\}$. The latter is explained by
the following observation:
$\max(0,1)+\max(1,0)\leq 1+1$ but 
$(\min(0,1),\min(1,0))\not\in \{(0,1),(1,0)\}$.
Therefore $R_0=2$ and $R_1=7$.

Define
\begin{displaymath}
\mathcal{P}(n):=\{X\in \mathcal{R}(n)\,:\, (0,0)\in X\} 
\end{displaymath}
and set $P_n:=|\mathcal{P}(n)|$. For example, $P_0=1$ while $P_1=4$.

\subsection{Recurrence}\label{s1.2}

Define two functions $a,b:\mathbb{Z}^2\to \mathbb{Z}$ recursively.
As the initial conditions for $a$, we set 
\begin{itemize}
\item $a(n,i)=0$ if $i<0$ or $n<-2$;
\item $a(n,i)=0$ if $n\geq 0$ and $i>n+1$;
\item $a(n,i)=0$ if $n<0$ and $i\neq 0$;
\item $a(-1,0)=a(-2,0)=a(0,0)=a(0,1)=1$.
\end{itemize}
As the initial conditions for $b$, we set 
\begin{itemize}
\item $b(n,i)=0$ if $i\leq 0$ or $i>n+1$ or $n<0$;
\item $b(0,1)=1$.
\end{itemize}
Consequently, the two functions are supported on a triangular cone.

The recurrences interlace the two functions:
\begin{equation}\label{recurrence:a1}
a(n,0)=\sum_{i=0}^{n}a(n-1,i),
\end{equation}
for $n>0$, and
\begin{equation}\label{recurrence:a2}
a(n,k)=b(n,k)+\sum_{j=0}^{n-1} b(j,k)\sum_{i=0}^{n-j-1}a(n-j-2,i),
\end{equation}
for $n>0$ and $k>0$. Furthermore,
\begin{equation}\label{recurrence:b}
b(n,k)=a(n-1,k-1)+\sum_{m=1}^{n}\sum_{r=0}^{\min(m-1,k-1)} a(m-2,r)\sum_{q=k-1-r}^{n-m+1} b(n-m,q),
\end{equation}
for $n>0$ and $1\leq k\leq n+1$.

Define two functions $A,B:\mathbb{Z}\to \mathbb{Z}$ as
\begin{equation}\label{abvsAB}
A(n):=\sum_{i\in\mathbb{Z}}a(n,i)\qquad\text{ and }\qquad
B(n):=\sum_{i\in\mathbb{Z}}b(n,i).
\end{equation}

\subsection{Main result}\label{s1.3}

The main result of the paper is the following:

{\bf Theorem A}. For $n\geq 0$, we have $R_n=A(n)$ and $P_n=B(n)$.

Due to the easy recursive nature of both $A(n)$ and $B(n)$, this 
allows for very efficient computations of the values of 
$R_n$ (the OEIS sequence A393920) and $P_n$ (the OEIS sequence A234268).

\subsection{Other results}\label{s1.4}

We also explore various connections between $R_n$ and $P_n$
and several integer sequences:

\begin{itemize}
\item In Section~\ref{s3} we define a certain quotient of 
the restriction graph for $\mathcal{R}(n)$ and in 
Section~\ref{s4} we show that vertices in this quotient
graph are enumerated by the classical Fibonacci numbers.
\item In Subsection~\ref{s9.3} we show that 
$a(n,n+1)$ is the classical Catalan sequence.
\item In Subsection~\ref{s9.5} we show that $P_n$
enumerates convex topologies on a finite totally ordered set,
which is the sequence A234268  in \cite{OEIS}.
\end{itemize}

In Section~\ref{s7}, we provide some estimates for the
growth of $R_n$. This includes an asymptotic 
exponential lower bound with base $9$ 
and an asymptotic exponential upper bound with base
$\approx 12.75$.

The set $\mathcal{R}(n)$ is a lattice with respect to inclusions.
In Section~\ref{s12}, we study some properties related to this structure.
In particular, we describe atoms, coatoms, join-irreducible and
meet-irreducible elements. Some of these are very easy, 
some other are not.

\subsection{How this paper originated and was written
with help of Microsoft Copilot}\label{s1.5}

The sequence A393920 was added to \cite{OEIS} very recently.
It computes the number of extension closed, additive, idempotent 
split subcategories of the category of finite dimensional 
representations of a uniformly oriented $A_{n+1}$ quiver, see \cite{KS}. 
This is exactly the sequence $R_n$ defined above.

Henning Krause mentioned  A393920 during his talk at Uppsala 
University in May 2026
which got me interested. Despite of an  elementary and easy 
description, it seemed that computation of the elements of this 
sequence was a challenging problem. In \cite{KS}, the values were 
computed up to $n=6$, with two more values
added by Fr{\'e}d{\'e}ric Chapoton, as can be seen on 
the corresponding page of \cite{OEIS}.

Since the task 
sounded elementary but computationally challenging, 
I first tried to discuss the sequence with
Microsoft Copilot. After a few back and forth, Copilot proposed an 
effectivization for the naive algorithm for computation of the sequence,
computed a few new values and made an interesting observation of 
potential connection to the Fibonacci sequence A000045.
The present note originated as an attempt of a rigorous write-up of these 
discussions with Copilot.

The point of Copilot's effectivization was the following: the naive approach
to compute the sequence would be to go through all subsets of our 
triangular array of integer points in the plane and check
Condition~$(\star)$ for all these subsets. 
A very conservative estimate of necessary time
suggested that it would require thousands of years or more to go beyond $n=8$.
Copilot's idea was to look instead into a certain rooted graph, whose vertices
are constructed recursively level by level, and where the original 
sequence appears as the number of paths from vertices at level $n$
to the root. The total number of vertices at level $n$ seemed to be
growing much slower than the sequence itself, which suggested 
possibility for more efficient computations and, especially, for 
information storage. As a matter of fact,
Copilot observed that this number of vertices at level $n$, for small 
values of $n$, was given by the Fibonacci numbers with odd indices
and conjectured that this should hold in general.
With some effort, I managed to find a strategy to
prove this conjecture using an unusual recurrence 
for Fibonacci numbers, see Theorem~\ref{thm3}.
After this strategy was identified, 
Copilot was able to help with several steps of its execution.

When one cannot compute the elements of a sequence explicitly,
the question of growth of this sequence 
becomes especially relevant. Having computed the first $12$
terms of the sequence, it looked very plausible that
the growth of the sequence is exponential with some base
between $9$ and $11$. With substantial help of Copilot,
it turned out to be possible to find an exponential lower
bound with base $\approx 8.33$ and a factorial upper bound.
Here the contributions were distributed as follows:
\begin{itemize}
\item I looked for combinatorial lower or upper bounds which 
satisfy ``easy'' recurrence;
\item Copilot analyzes the recurrence using analytic tools
and establishes its growth rate (for example, in the case
of exponential growth this means: Copilot determines 
a polynomial relation satisfied by the generating function
and obtains the base of the exponential growth as the inverse of the
smallest positive root).
\end{itemize}
After a few refinements for the combinatorial lower or upper bounds,
I got convinced that $R_n$ can be defined recursively, but not in a
very trivial way. For combinatorial lower and upper bounds I used
statistics of the elements in $\mathcal{R}(n)$ with respect to the
number of points in the first column. Written properly, this
essentially gives the sequences $a(n,k)$, $b(n,k)$, $A(n)$
and $B(n)$ introduced above. The only problem was to guess the
correct recurrence. It took me quite a while to figure out
that the second term in \eqref{recurrence:b} should be a triple
and not a double sum. After that, it took many hours of computations
with the help of Copilot to correctly guess the summation
boundaries, with the final touch, namely $\min(m-1,k-1)$, in fact,
suggested by Copilot. After it was confirmed that the sequence
computes exactly what it should (which Copilot could compare
with the brute-forced combinatorial picture for small $n\leq 8$), 
to prove that this is the right thing was just some technical work.
The major consequence: it is now easy and fast to compute $R_n$ 
for the values of $n$ at least up to $100$. Copilot can
certainly do more as it has access to computational facilities
that significantly exceed anything I have access to.

With the exact recurrence at hand, it was not too difficult to
obtain significantly better lower and, especially, 
upper bounds for growth, where
the absolute majority of the heavy job was done by Copilot
(who apparently has much better knowledge of analytic
combinatorics than I have).

As was also suggested by Henning Krause, I looked a little bit
in the lattice structure of $\mathcal{R}(n)$. Here some basic
things, for example, description of atoms or join-irreducible
elements, were very easy. On the other hand, their duals
(coatoms and meet-irreducible elements) turned out to be
non-trivial. Here it was interesting to observe that Copilot
can easily compute small rank examples and even formulate
conjectures, but sometimes has serious difficulties in producing a
complete proof. Especially for the description of 
meet-irreducible elements, the original proof by Copilot contained
many gaps and shortcuts. It required a significant 
(combinatorial) effort from my side to make it work at the end.

\subsection*{Acknowledgements}
The author is partially supported by the Swedish Research Council.
Some computation in the paper were done using SageMath,
\cite{Sage}.

\subsection*{AI statement}

Many of the results and ideas in this paper arose 
from extensive exploratory discussions with Microsoft 
Copilot. Copilot’s role was really substantial 
at the exploratory and computational stages. 
It suggested computational strategies, helped identify 
patterns, assisted in formulating conjectural recurrences, 
and contributed to parts of the analytic-combinatorial 
analysis. These contributions are therefore explicitly 
acknowledged. However, all mathematical statements, proofs, 
corrections, and final formulations are the responsibility of the author.

\vspace{5mm}

\section{Representation theoretic background}\label{s2}

\subsection{Uniformly oriented quivers of type $A_n$}\label{s2.1}

Let $n$ be a positive integer. Consider the following quiver,
which we denote by $\mathtt{A}_n$:
\begin{displaymath}
\xymatrix{
1\ar[r]^{\alpha_1}&2\ar[r]^{\alpha_2}&
3\ar[r]^{\alpha_3}&\dots\ar[r]^{\alpha_{n-1}}&n
}
\end{displaymath}
Note that the underlying unoriented graph of $\mathtt{A}_n$
is a Dynkin diagram of type $A_n$.

\subsection{Representations}\label{s2.2}

Let us work over the field $\mathbb{C}$ of complex numbers.
A (finite dimensional) representation $V$ of $\mathtt{A}_n$ is an assignment
\begin{itemize}
\item of a finite dimensional (complex) vector space $V(i)$
to each vertex $i$ of $\mathtt{A}_n$;
\item and a linear operator $V(\alpha_i):V(i)\to V(i+1)$ to each 
arrow $\alpha_i$ of $\mathtt{A}_n$.
\end{itemize}
Given two representation $V$ and $V'$ of $\mathtt{A}_n$, a morphism
$f:V\to V'$ is an assignment of a linear map $f(i):V(i)\to V'(i)$, for
every vertex $i$, such that the following diagram commutes:
\begin{displaymath}
\xymatrix{
V(1)\ar[rr]^{V(\alpha_1)}\ar[d]_{f(1)}&&V(2)\ar[rr]^{V(\alpha_2)}\ar[d]_{f(2)}&&
V(3)\ar[rr]^{V(\alpha_3)}\ar[d]_{f(3)}&&
\dots\ar[rr]^{V(\alpha_{n-1})}\ar@{..}[d]&&V(n)\ar[d]_{f(n)}\\
V'(1)\ar[rr]^{V'(\alpha_1)}&&V'(2)\ar[rr]^{V'(\alpha_2)}&&
V'(3)\ar[rr]^{V'(\alpha_3)}&&\dots\ar[rr]^{V'(\alpha_{n-1})}&&V'(n)\\
}
\end{displaymath}
All (finite dimensional)
representations of $\mathtt{A}_n$ and their morphisms form a category,
denoted by $\mathtt{A}_n$-mod. The category $\mathtt{A}_n$-mod
is abelian, idempotent split, Krull-Schmidt, 
it has enough projectives and every object 
of this category has finite length.

\subsection{Indecomposable representations}\label{s2.3}

For $1\leq a\leq b\leq n$, denote by $\mathbf{V}_{a,b}$ the representation of
$\mathtt{A}_n$ defined as follows:
\begin{displaymath}
\mathbf{V}_{a,b}(i):=
\begin{cases}
\mathbb{C},& a\leq i\leq b;\\
0, & \text{else};
\end{cases}
\qquad
\mathbf{V}_{a,b}(\alpha_i):=
\begin{cases}
\mathrm{Id}_\mathbb{C},& a\leq i\leq b-1;\\
0, & \text{else}.
\end{cases}
\end{displaymath}
Gabriel's Theorem, see \cite{Ga}, says that the set
$\{\mathbf{V}_{a,b}\,:\, 1\leq a\leq b\leq n\}$
is a complete and irredundant list of representatives of the
isomorphism classes of indecomposable representations of 
$\mathtt{A}_n$. Consequently, any (finite dimensional)
representations of $\mathtt{A}_n$ can be written as a direct sum
of the $\mathbf{V}_{a,b}$'s uniquely, up to isomorphism.

\subsection{Extensions}\label{s2.4}

The following is easy to check, see, for example, \cite[Lemma~5]{KS}:
Unless $a< c\leq b+1$ and $b< d$, we have 
$\mathrm{Ext}^1(\mathbf{V}_{a,b},\mathbf{V}_{c,d})=0$.
If $a<c\leq b+1$ and $b< d$, 
then we have $\mathrm{Ext}^1(\mathbf{V}_{a,b},\mathbf{V}_{c,d})\cong\mathbb{C}$
and there is a non-split short exact sequence of the form
\begin{displaymath}
0\to  \mathbf{V}_{c,d}\to 
\mathbf{V}_{\min(a,c),\max(b,d)}\oplus
\mathbf{V}_{\max(a,c),\min(b,d)}
\to \mathbf{V}_{a,b} \to 0,
\end{displaymath}
where, by convention, $\mathbf{V}_{\max(a,c),\min(b,d)}=0$ provided that
$\max(a,c)>\min(b,d)$.

\subsection{Extension closed additive subcategories}\label{s2.5}

In \cite[Section~3]{KS}, among other things, a problem to compute the
number of full, additive, idempotent split subcategories of 
$\mathtt{A}_n$-mod that are closed under taking extensions is raised. 
Such a subcategory $\mathcal{X}$ is uniquely determined by the
indecomposable modules $\mathbf{V}_{a,b}$ that it contains.
By Subsection~\ref{s2.4}, being extension closed amounts to the following
condition:
if $\mathcal{X}$ contains $\mathbf{V}_{a,b}$ and $\mathbf{V}_{c,d}$,
for some $a< c\leq b+1$ and $b<d$, then 
$\mathcal{X}$ must contain $\mathbf{V}_{\min(a,c),\max(b,d)}$ as well as
$\mathbf{V}_{\max(a,c),\min(b,d)}=0$ (the latter, provided that
$\max(a,c)\leq \min(b,d)$). In \cite[Section~3]{KS}, we can find 
that the number of such subcategories for small values of $n$ is given by:
\begin{displaymath}
 2, 7, 34, 199, 1308, 9300.
\end{displaymath}
The two next terms $69978$ and $549559$ can be found in A393920, see \cite{OEIS},
computed by Fr{\'e}d{\'e}ric Chapoton.

\subsection{Connection}\label{s2.9}

For $1\leq a\leq b\leq n+1$, we associate the module $\mathbf{V}_{a,b}$
to the point $(n+1-b,a-1)\in \mathcal{Q}(n)$. This correspondence is bijective.
Given $1\leq a\leq b\leq n+1$ and $1\leq c\leq d\leq n+1$, the conditions
$a< c$, $c\leq b+1$ and $b<d$ are equivalent to
the conditions $a-1< c-1$, $(c-1)+(n+1-b)\leq n+1$ and  
$n+1-d< n+1-b$, respectively.
Finally, if these are satisfied,
the condition $\max(a,c)\leq \min(b,d)$ is equivalent to the
condition $\max(a-1,c-1)+\max(n+1-b,n+1-d)\leq n$.
Therefore $R_n$ counts the number of full, additive, idempotent split 
subcategories of  $\mathtt{A}_{n+1}$-mod that are closed under taking extensions.

\subsection{Conventions}\label{s2.95}

Let $\mathbf{2}^{\mathcal{Q}(n)}$ denote the boolean of $\mathcal{Q}(n)$.
For us, it will be convenient to 
identify a subset $Y\subset \mathcal{Q}(n)$ with the collection
$(Y_n,Y_{n-1},\dots,Y_0)$ of the first coordinates of its rows,
indexed by the second coordinate. This means that 
$Y_i$ is a subset of $\{0,1,\dots,n-i\}$ consisting of all those 
$x$ for which  $(x,i)\in Y$. For example, the subset
$\{(1,0),(2,0),(2,1),(0,3)\}$ of $\mathcal{Q}(3)$ is identified 
with $(\{0\},\varnothing,\{2\},\{1,2\})$.

We depict $Y\subset \mathcal{Q}(n)$ as a triangular
$0$-$1$ matrix in the obvious way. For example
$Y=(\{{\color{teal}0}\},\{{\color{orange}1}\},
\{{\color{magenta}0,1}\},\{{\color{violet}2,3}\})
\in \mathcal{Q}(3)$ will be depicted as follows:
\begin{displaymath}
\xymatrix@R=0.3mm@C=0.3mm{
{\color{teal}1}&&&\\
0&{\color{orange}1}&&\\
{\color{magenta}1}&{\color{magenta}1}&0&\\
0&0&{\color{violet}1}&{\color{violet}1}\\
}
\end{displaymath}
Here the color indicates which elements appear where in the picture.

\section{Proof of Main Result}\label{s3new}

\subsection{First column}\label{s3new.1}

Let $n\geq 0$. For $0\leq k \leq n+1$, consider the sets $\mathcal{R}(n)_k$
and $\mathcal{P}(n)_k$ consisting of all elements of 
$\mathcal{R}(n)$ and $\mathcal{P}(n)$, respectively, which 
contain exactly $k$ points of the form $(0,i)$.
Note that, geometrically, these points belong to the
first (leftmost) column of $\mathcal{Q}(n)$.
We set $r(n,k):=|\mathcal{R}(n)_k|$ and 
$p(n,k):=|\mathcal{P}(n)_k|$.
From
\begin{displaymath}
\mathcal{R}(n)=\coprod_{k=0}^{n+1}\mathcal{R}(n)_k
\quad\text{ and }\quad
\mathcal{P}(n)=\coprod_{k=0}^{n+1}\mathcal{P}(n)_k, 
\end{displaymath}
using the Rule of Sum, we have
\begin{equation}\label{rpvsRP}
R_n=\sum_{k=0}^{n+1} r(n,k) 
\quad\text{ and }\quad
P_n=\sum_{k=0}^{n+1} p(n,k). 
\end{equation}

Directly from the definitions, we have $p(n,0)=0$,
for all $n\geq 0$. Furthermore, for $n\geq 0$ and $k\not\in\{0,1,\dots,n+1\}$,
we have both  $p(n,k)=r(n,k)=0$. We also have
$p(0,1)=r(0,1)=r(0,0)=1$.

We will consider $r$ and $p$ as functions from $\mathbb{Z}^2$
to $\mathbb{Z}$ using the following conventions
(compare with Subsection~\ref{s1.2}):
\begin{itemize}
\item $r(n,i)=0$ if $i<0$ or $n<-2$;
\item $r(n,i)=0$ if $n<0$ and $i\neq 0$;
\item $r(-1,0)=r(-2,0)=1$;
\item $p(n,i)=0$ if $i\leq 0$ or $i>n+1$.
\end{itemize}

\subsection{Recurrence}\label{s3new.2}

\begin{proposition}\label{prop:rec1}
For $n>0$, we have
\begin{equation}\label{prop:rec1.1}
r(n,0)=\sum_{i=0}^{n}r(n-1,i).
\end{equation}
Moreover, for $n>0$ and $1\leq k\leq n+1$, we have
\begin{equation}\label{prop:rec1.2}
r(n,k)=p(n,k)+\sum_{j=0}^{n-1} p(j,k)\sum_{i=0}^{n-j-1}r(n-j-2,i).
\end{equation} 
\end{proposition}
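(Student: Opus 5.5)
The plan is to prove both identities by explicit bijections, decomposing each $Y\in\mathcal{R}(n)$ according to its points in the first column. First I will record one observation. If $(a,b)$ and $(c,d)$ are comparable in the product order, then $(\min(a,c),\min(b,d))$ and $(\max(a,c),\max(b,d))$ are just the two given points. So Condition~$(\star)$ only needs checking for incomparable pairs, say $a<c$ and $b>d$. For such a pair the hypothesis reads $c+b\le n+1$. The conclusion is that $(a,d)\in Y$, and also $(c,b)\in Y$ whenever $c+b\le n$.

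For \eqref{prop:rec1.1}, take $Y\in\mathcal{R}(n)_0$, so every point of $Y$ has $x\ge 1$. I will show that the shift $(x,y)\mapsto(x-1,y)$ is a bijection from $\mathcal{R}(n)_0$ onto $\mathcal{R}(n-1)$. Under this shift an incomparable pair with $a<c$, $b>d$ has hypothesis $c+b\le n+1$, which becomes $(c-1)+b\le (n-1)+1$; this is exactly the hypothesis of $(\star)$ in $\mathcal{Q}(n-1)$. The point $(a,d)$ has $a\ge1$, so its shift lies in $\mathcal{Q}(n-1)$. Finally, $(c,b)\in\mathcal{Q}(n)$ holds iff $(c-1,b)\in\mathcal{Q}(n-1)$. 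Hence $r(n,0)=R_{n-1}=\sum_{i=0}^{n}r(n-1,i)$.

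For \eqref{prop:rec1.2}, fix $k\ge1$ and let $(0,s)$ be the lowest first-column point of $Y$. If $s=0$, then $Y\in\mathcal{P}(n)_k$, which gives the term $p(n,k)$. Otherwise put $j:=n-s\in\{0,\dots,n-1\}$. The key step is to show that $Y$ splits into two independent pieces.

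\emph{Upper piece.} This is $Y\cap\{y\ge s\}$. Shifting by $(0,-s)$ sends it into $\mathcal{Q}(j)$, takes $(0,s)$ to $(0,0)$, and contains all $k$ first-column points.

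\emph{Lower piece.} This is $Y\cap\{y<s\}$. Take $(c,d)\in Y$ with $d<s$, so $c\ge1$ by minimality of $s$. If $c+s\le n+1$, then the incomparable pair $(0,s),(c,d)$ would force $(0,d)\in Y$, which is impossible. Hence $c\ge j+2$. The shift $(x,y)\mapsto(x-j-2,y)$ then sends the lower piece into $\mathcal{Q}(n-j-2)$, where the inequality $y\le s-1$ is automatic.

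Next I will check that no instance of $(\star)$ couples the two pieces. An upper point $(a,b)$ has $a\le n-b\le j$, and a lower point $(c,d)$ has $c\ge j+2$. So any pair of this kind is incomparable with $\max$-sum $c+b\ge j+2+s=n+2>n+1$, and $(\star)$ imposes nothing. Within each piece, the same translation bookkeeping as in the first part shows that $(\star)$ in $\mathcal{Q}(n)$ is equivalent to $(\star)$ in $\mathcal{Q}(j)$, respectively $\mathcal{Q}(n-j-2)$. In particular the threshold $n+1$ becomes $j+1$, respectively $n-j-1$, and membership of the $\max$-point in the ambient triangle is preserved.

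Conversely, take any element of $\mathcal{P}(j)_k$ and any element of $\mathcal{R}(n-j-2)$, where for $j=n-1$ the latter set is $\{\varnothing\}$, matching the convention $r(-1,0)=1$. Shifting them back and gluing produces an element of $\mathcal{R}(n)_k$ whose lowest first-column point is $(0,n-j)$. Summing over $j$ gives \eqref{prop:rec1.2}, using $R_{n-j-2}=\sum_{i=0}^{n-j-1}r(n-j-2,i)$.

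The main obstacle is the careful verification that the lower piece is confined to $x\ge j+2$ and decouples completely from the upper piece, together with the edge cases $j=n-1$ and $j=n-2$ handled by the negative-index conventions. Once this is checked, everything else is routine translation bookkeeping.
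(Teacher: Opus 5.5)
Your proof is correct and follows the same route as the paper. The empty-first-column case is a left shift onto $\mathcal{R}(n-1)$, and for $k\ge 1$ you split by the lowest first-column point $(0,s)$ into an upper block in $\mathcal{P}(n-s)_k$ and a decoupled lower block in $\mathcal{R}(s-2)$, exactly as the paper does with $m=s$ and $j=n-m$; you supply the explicit verifications that the paper leaves to its picture. One small correction: only $j=n-1$ actually uses a negative-index convention, namely $r(-1,0)=1$, while for $j=n-2$ the lower block lives in the genuine $\mathcal{Q}(0)$.
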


\begin{proof}
For $Y\in \mathcal{R}(n)_0$, deleting the (empty) first column and shifting
one step to the left produces an element of $\mathcal{R}(n-1)$. Moreover,
this procedure is, clearly, reversible. This means that there is a bijection
between $\mathcal{R}(n)_0$ and $\mathcal{R}(n-1)$ which implies that
$|\mathcal{R}(n)_0|=|\mathcal{R}(n-1)|$. This translates into
$r(n,0)=R_{n-1}$ and hence \eqref{prop:rec1.1}
follows from \eqref{rpvsRP}.

To prove \eqref{prop:rec1.2}, we split $\mathcal{R}(n)_k$ 
according to the minimal second coordinate for an element 
in the first column. Note that, since $k>0$, this first column
is not empty. Let $(0,m)$, where $0\leq m\leq n$, be the 
element with the minimal second coordinate in the first column. 
Then Condition~$(\star)$ implies that the corresponding 
$Y\in \mathcal{R}(n)$ must have the form
\begin{displaymath}
\xymatrix@R=0.3mm@C=0.3mm{
{\color{teal}\bullet}&&&&&&&&&\\
{\color{teal}\bullet}&{\color{teal}\bullet}&&&&&&&&\\
{\color{teal}1}&{\color{teal}\bullet}&{\color{teal}\bullet}&&&&&&&&\\
0&0&0&0&&&&&&\\
0&0&0&0&{\color{violet}\bullet}&&&&&\\
0&0&0&0&
{\color{violet}\bullet}&{\color{violet}\bullet}&&&&\\
0&0&0&0&{\color{violet}\bullet}&{\color{violet}\bullet}&{\color{violet}\bullet}&&&\\
0&0&0&0&
{\color{violet}\bullet}&{\color{violet}\bullet}&{\color{violet}\bullet}&
{\color{violet}\bullet}&&\\
0&0&0&0&{\color{violet}\bullet}&{\color{violet}\bullet}&
{\color{violet}\bullet}&{\color{violet}\bullet}&{\color{violet}\bullet}&
}
\end{displaymath}
For $m=0$, we only have the {\color{teal}teal} part which is an element
of $\mathcal{P}(n,k)$. Therefore this case contributes $p(n,k)$.
For $m=1$, we also only have the {\color{teal}teal} part which is an element
of $\mathcal{P}(n-1,k)$. Therefore this case contributes $p(n-1,k)$.
Note that we can also write $p(n-1,k)=p(n-1,k)r(-1,0)$ as $r(-1,0)=1$.
The makes the case $m=1$ consistent with the case $m>1$ below.

If $m>1$, the {\color{teal}teal}  and the {\color{violet}violet} parts 
do not interact with respect to Condition~$(\star)$.
Therefore the {\color{teal}teal} part is in bijection with 
$\mathcal{P}(n-m,k)$ and the  {\color{violet}violet} part is
independently in
bijection with $\mathcal{R}(m-2)$. By the Rule of Product, the
total contribution is $p(n-m,k)R_{m-2}$. 

Finally, setting $j=n-m$, using \eqref{rpvsRP}, 
and adding up over all $j$, we get \eqref{prop:rec1.2}.
\end{proof}

The following proposition is the crucial observation for
the proof of the main result.

\begin{proposition}\label{prop:rec2}
For $n>0$ and $1\leq k\leq n+1$, we have
\begin{equation}\label{prop:rec2.1}
p(n,k)=r(n-1,k-1)+
\sum_{m=1}^{n}\sum_{r=0}^{\min(m-1,k-1)} 
r(m-2,r)\sum_{q=k-1-r}^{n-m+1} p(n-m,q).
\end{equation} 
\end{proposition}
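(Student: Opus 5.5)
We will prove \eqref{prop:rec2.1} by a bijective decomposition of $\mathcal{P}(n)_k$, in the same spirit as the proof of Proposition~\ref{prop:rec1}, but now splitting according to the bottom row $Y_0$ rather than the first column. Every $Y\in\mathcal{P}(n)_k$ contains $(0,0)$. Note that $(0,0)$ never creates a new requirement in Condition~$(\star)$: paired with $(c,d)$, the minimum is $(0,0)$ and the maximum is $(c,d)$, both already in $Y$. We split into two cases.

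\textbf{Case $Y_0=\{0\}$.} We claim that deleting row $0$ and shifting the remaining rows down by one gives a bijection onto $\mathcal{R}(n-1)_{k-1}$. Indeed, consider two points $(a,b),(c,d)$ with $b,d\geq1$. Their minimum and maximum again have second coordinate at least $1$. Moreover, the inequality $\max(a,c)+\max(b,d)\leq n+1$ becomes, after the shift, exactly the inequality for $\mathcal{Q}(n-1)$. Pairs involving $(0,0)$ impose nothing, as noted above. The first column loses exactly one point, namely $(0,0)$. This case therefore contributes $r(n-1,k-1)$.

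\textbf{Case $Y_0\neq\{0\}$.} Let $m$, with $1\leq m\leq n$, be minimal such that $(m,0)\in Y$. We aim to show that $Y$ decomposes into three independent pieces.

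\emph{Step 1.} Take a point $(x,y)\in Y$ with $0<x<m$ and $y\geq1$. Pairing it with $(m,0)$ gives the minimum $(x,0)\notin Y$, so $(\star)$ forces $m+y>n+1$. Hence the strip $0<x<m$ is empty below height $n+2-m$.

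\emph{Step 2 (the top-left triangle).} The points of $\mathcal{Q}(n)$ with $x\leq m-1$ and $y\geq n+2-m$ form, after the shift $y\mapsto y-(n+2-m)$, a copy of $\mathcal{Q}(m-2)$. We will check that it interacts with the rest only trivially: any pair that mixes it with a point in the region $x\geq m$ violates the inequality in $(\star)$. Its restriction is then an arbitrary element of $\mathcal{R}(m-2)_r$, where $r$ is the number of its first-column points, so $0\leq r\leq\min(m-1,k-1)$. For $m=1$ this piece is empty, matching the convention $r(-1,0)=1$.

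\emph{Step 3 (the right part).} The region $x\geq m$, shifted left by $m$, is a copy of $\mathcal{Q}(n-m)$. The restriction of $Y$ to it contains $(m,0)$, and we will verify that it is an element of $\mathcal{P}(n-m)_q$, where $q$ is the number of its points in column $m$.

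\emph{Step 4 (column $0$ at heights $1,\dots,n+1-m$).} It remains to understand the first-column points at heights $1,\dots,n+1-m$. There are $k-1-r$ of them. Pairing such a point $(0,i)$ with $(m,0)$ forces $(m,i)\in Y$ whenever $m+i\leq n$. Pairing it with points $(m,d)$ for $d<i$ forces further points, and the minima produced lie in column $0$.

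The main obstacle is Step 4. We must prove that, once the $\mathcal{P}(n-m)_q$ piece and the $\mathcal{R}(m-2)_r$ piece are fixed, the set of these $k-1-r$ low first-column points is uniquely determined, and that such a configuration exists exactly when $q\geq k-1-r$. We will first try to show that these points must form a specific segment determined by column $m$, with the boundary height $n+1-m$ explaining the upper limit $q\leq n-m+1$. For example, they might be the lowest $k-1-r$ heights occupied in column $m$, or the lowest such heights reflected through the $(m,0)$ constraint. Once this is established, the remaining verification is that all cross-pairs between column $0$, the triangle, and the right part satisfy $(\star)$. This is a routine case check on which coordinates realise the maxima. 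Summing $r(m-2,r)\,p(n-m,q)$ over $m$, $r$ and $q\geq k-1-r$, and adding the first case, gives \eqref{prop:rec2.1}.
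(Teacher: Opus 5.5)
There is a genuine gap, and it sits in Step~4, which is the actual content of the proposition. Your case $Y_0=\{0\}$ and Steps~1--3 are correct and match the paper's decomposition: $(m,0)$ is the second point of the bottom row, the strip $0<x<m$, $y\le n+1-m$ is forced to be empty, the top-left copy of $\mathcal{Q}(m-2)$ contributes $r(m-2,r)$, and the region $x\ge m$ gives an element of $\mathcal{P}(n-m)_q$.

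For Step~4, however, you only state what must be shown and then guess at its shape. The guess does not identify the mechanism. Reading the low first-column points off the occupied heights of column $m$ misses the configurations with $(0,n+1-m)\in Y$. Those configurations are exactly the summand $q=k-1-r$; without them you would obtain the sum starting at $q=k-r$. You also misattribute the role of the boundary height. The bound $q\le n-m+1$ is just the length of column $m$. The cell $(0,n+1-m)$ is what produces the extra lowest term.

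The missing argument runs as follows. For $1\le i\le n-m$, pairing $(0,i)$ with $(m,0)$ forces $(m,i)\in Y$. Pairing $(0,i)$ with $(m,d)$, $d<i$, triggers $(\star)$ because $m+i\le n+1$, and forces $(0,d)\in Y$. So the occupied heights of column $0$ in $[0,n-m]$ form an initial segment of the occupied heights of column $m$ (the paper's prefix property).

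The top cell $(0,n+1-m)$ behaves differently. Since $(m,n+1-m)\notin\mathcal{Q}(n)$, nothing is forced in column $m$. Still, pairing it with any $(m,d)\in Y$ triggers $(\star)$, because $m+(n+1-m)=n+1$, and this forces $(0,d)\in Y$. Hence the top cell can be present only if column $0$ copies column $m$ exactly on heights $0,\dots,n-m$.

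Consequently, for a right part with $q$ points in column $m$, the number of admissible choices with $k-r$ points in column $0$ at heights $0,\dots,n+1-m$ is as follows. There is exactly one when $q\ge k-r$: the prefix of length $k-r$, with the top cell empty. There is exactly one when $q=k-1-r$: the full copy plus the top cell. There are none otherwise.

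For the converse you must also handle the pairs $(0,i)$, $(c,d)$ with $c>m$, $d<i$, $c+i\le n+1$, which genuinely trigger $(\star)$, so your ``routine'' check needs this argument. Here $i\le n-m$, so $(m,i)\in Y$. Closure of the right part then gives $(c,i)\in Y$ (if $c+i\le n$) and $(m,d)\in Y$, and the prefix property gives $(0,d)\in Y$. This yields $\sum_{q=k-1-r}^{n-m+1}p(n-m,q)$.
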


\begin{proof}
The elements of $\mathcal{Q}(n)$ of the form $(i,0)$
form what we will call the {\em bottom row}.
We split $\mathcal{P}(n)_k$ according to the second 
minimal first coordinate for an element in this bottom row.
If the bottom row is a singleton, deleting it gives an element in 
$\mathcal{R}(n-1)_{k-1}$ which contributes $r(n-1,k-1)$, the first
summand of \eqref{prop:rec2.1}.

If the bottom row is not a singleton, let $(m,0)$ be the 
element with the second minimal first coordinate in this bottom row. 
Then Condition~$(\star)$ implies that the corresponding 
$Y\in \mathcal{P}(n)$ must have the form
\begin{displaymath}
\xymatrix@R=0.3mm@C=0.3mm{
{\color{teal}\bullet}&&&&&&&&&\\
{\color{teal}\bullet}&{\color{teal}\bullet}&&&&&&&&\\
{\color{teal}\bullet}&{\color{teal}\bullet}&{\color{teal}\bullet}&&&&&&&&\\
{\color{orange}\bullet}&0&0&0&&&&&&\\
{\color{cyan}\bullet}&0&0&0&{\color{magenta}\bullet}&&&&&\\
{\color{cyan}\bullet}&0&0&0&
{\color{magenta}\bullet}&{\color{violet}\bullet}&&&&\\
{\color{cyan}\bullet}&0&0&0&
{\color{magenta}\bullet}&{\color{violet}\bullet}&{\color{violet}\bullet}&&&\\
{\color{cyan}\bullet}&0&0&0&
{\color{magenta}\bullet}&{\color{violet}\bullet}&{\color{violet}\bullet}&
{\color{violet}\bullet}&&\\
{\color{cyan}1}&0&0&0&{\color{magenta}1}&{\color{violet}\bullet}&
{\color{violet}\bullet}&{\color{violet}\bullet}&{\color{violet}\bullet}&
}
\end{displaymath}
Here the {\color{teal}teal}  part is 
an element of $\mathcal{R}(m-2,r)$, for some
$0\leq r\leq\min(m-1,k-1)$, and hence can be chosen
in $r(m-2,r)$ different ways under these restrictions on $r$.

The union of the {\color{violet}violet}
and {\color{magenta}magenta} parts is an element of
$\mathcal{P}(n-m)$. The union of the 
{\color{violet}violet}, {\color{magenta}magenta},
{\color{cyan}cyan} and {\color{orange}orange} parts is 
an element of $\mathcal{P}(n-m+1)$.  Unfortunately,
we see that this element of $\mathcal{P}(n-m+1)$
is not arbitrary as its bottom row starts with two $1$'s
and also its first column should contain exactly
$k-r$ elements.
However, we can in any case say that
Condition~$(\star)$ cannot be triggered by an interaction of
the {\color{teal}teal} part with any of the remaining four
colored parts (that is, if we ignore the black $0$'s).
In particular, the choice of an appropriate 
element of $\mathcal{P}(n-m+1)$ for the four-color part
is independent from the {\color{teal}teal} part.

It remains to show that the set of all elements of 
$\mathcal{P}(n-m+1)$ whose bottom row starts with two $1$'s
and which have exactly $k-r$ elements in the first column
has cardinality
\begin{equation}\label{eq:rewrite}
\sum_{q=k-1-r}^{n-m+1} p(n-m,q).
\end{equation}
If we can prove this, \eqref{prop:rec2.1} follows by 
summing up everything above and applying the
Rules of Sum and Product.

Let us take a closer look at the {\color{magenta}magenta}
and {\color{cyan}cyan} columns. Due to the $1$'s 
in the origin and at position $(m,0)$,
any $1$ in the {\color{cyan}cyan} column yields, by
Condition~$(\star)$, a $1$ at the same 
height in the {\color{magenta}magenta}
column. Moreover, if we have $1$'s at heights $i<j$
and the {\color{magenta}magenta}
column and $1$ at height $j$ in {\color{cyan}cyan} column,
Condition~$(\star)$ forces $1$ at height $i$
in the {\color{cyan}cyan} column. In other words, 
if we read the heights for positions of $1$'s in the 
{\color{cyan}cyan} column in the natural order, 
we get a prefix of the similar  word for the {\color{magenta}magenta}
column. We refer to this as the {\em prefix property}. 
Finally, if the {\color{cyan}cyan}
and {\color{magenta}magenta} columns coincide, 
the {\color{orange}orange} element can be made equal to $1$.
If the {\color{cyan}cyan} and {\color{magenta}magenta} 
columns are different,  the {\color{orange}orange} element 
must be $0$.

Conversely, pick any element of $\mathcal{P}(n-m)$
for the union of  {\color{violet}violet} and {\color{magenta}magenta} 
parts. Now choose as the {\color{cyan}cyan} column any column with the
prefix property with respect to the {\color{magenta}magenta} 
column. Set  the {\color{orange}orange} element 
to be $0$. We obtain an element in $\mathcal{P}(n-m+1)$.
Additionally, if, as the {\color{cyan}cyan} column, we choose
the {\color{magenta}magenta} column, we can also make
the {\color{orange}orange} element $1$ and we still 
obtain an element in $\mathcal{P}(n-m+1)$.

Now let us remember that in our element in $\mathcal{P}(n-m+1)$
we need to have exactly $k-r$ points in the first column
at the end of the day. Therefore we need to start either with an 
element of $\mathcal{P}(n-m)$ for which the 
{\color{magenta}magenta} column has at least $k-r$ points to start
with (and we take as the {\color{cyan}cyan} column 
the unique prefix with exactly $k-r$ points and
set the {\color{orange}orange} element to be $0$), or
we start with an element in which the
{\color{magenta}magenta} column has $k-r-1$ points,
we copy it to the {\color{cyan}cyan} place and
we set the {\color{orange}orange} to be $1$.
Using the Rule of Sum, gives us \eqref{eq:rewrite} and completes the proof.
\end{proof}

\subsection{Proof of Theorem A}\label{s3new.3}

By definition, the initial values of $r(n,k)$
coincide with those of $a(n,k)$ and the initial values
of $p(n,k)$ coincide with those of $b(n,k)$.
Comparing \eqref{recurrence:a1},
\eqref{recurrence:a2} and \eqref{recurrence:b}
with \eqref{prop:rec1.1}, \eqref{prop:rec1.2}
and \eqref{prop:rec2.1}, respectively, we see that
the pair $r(n,k)$ and $p(n,k)$ satisfies the same recurrence
as the pair $a(n,k)$ and $b(n,k)$. Consequently,
$a(n,k)=r(n,k)$ and $b(n,k)=p(n,k)$, for all $n$ and $k$.

The claim of Theorem A now follows by comparing the definition
\eqref{abvsAB} with \eqref{rpvsRP}.\hfill\qed

\section{Computations}\label{s5}

Microsoft Copilot wrote an executable SageMath code (see Appendix)
to compute $a$, $b$, $A$ and $B$. Below are some results of this
computation.

Values of $a(n,k)$, up to $n=10$:

\resizebox{\textwidth}{!}{
$
\begin{array}{ll}
 a(0,\cdot) &= (1,1),\\
 a(1,\cdot) &= (2,3,2),\\
 a(2,\cdot) &= (7,11,11,5),\\
 a(3,\cdot) &= (34,52,58,41,14),\\
 a(4,\cdot) &= (199,295,338,280,154,42),\\
 a(5,\cdot) &= (1308,1891,2174,1925,1288,582,132),\\
 a(6,\cdot) &= (9300,13188,15113,13805,10178,5754,2211,429),\\
 a(7,\cdot) &= (69978,97752,111480,103460,80486,51324,25212,8437,1430),\\
 a(8,\cdot) &= (549559,758559,860838,805848,647528,443688,250404,108966,32318,4862),\\
 a(9,\cdot) &= (4462570,6100345,6891414,6483165,5319384,3817464,2349600,1192191,466180,124202,16796),\\
 a(10,\cdot) &= (37223311,50479363,56793341,53584575,44595474,33030522,21543918,12068628,5569850,1978834,478686,58786).
\end{array}
$
}

Values of $b(n,k)$, up to $n=10$:

\resizebox{\textwidth}{!}{
$
\begin{array}{ll}
 b(0,\cdot) &= (0,1),\\
 b(1,\cdot) &= (0,2,2),\\
 b(2,\cdot) &= (0,7,9,5),\\
 b(3,\cdot) &= (0,34,45,36,14),\\
 b(4,\cdot) &= (0,199,261,234,140,42),\\
 b(5,\cdot) &= (0,1308,1692,1584,1120,540,132),\\
 b(6,\cdot) &= (0,9300,11880,11331,8680,5130,2079,429),\\
 b(7,\cdot) &= (0,69978,88452,85104,68110,44820,22869,8008,1430),\\
 b(8,\cdot) &= (0,549559,688581,665334,546672,383400,222453,100100,30888,4862),\\
 b(9,\cdot) &= (0,4462570,5550786,5374296,4491480,3281796,2062368,1073072,432432,119340,16796),\\
 b(10,\cdot) &= (0,37223311,46016793,44592651,37702560,28331370,18779607,10724714,5065632,1849770,461890,58786).
\end{array}
$
}

The values of $A(n)$ and $B(n)$, up to $n=20$, can be found in
Figure~\ref{fig22}.

\begin{figure}
\resizebox{!}{3cm}{
$
\begin{array}{r|r|r}
 n & A(n) & B(n)\\
\hline
 0 & 2 & 1\\
 1 & 7 & 4\\
 2 & 34 & 21\\
 3 & 199 & 129\\
 4 & 1308 & 876\\
 5 & 9300 & 6376\\
 6 & 69978 & 48829\\
 7 & 549559 & 388771\\
 8 & 4462570 & 3191849\\
 9 & 37223311 & 26864936\\
 10 & 317405288 & 230807084\\
 11 & 2756819108 & 2017470636\\
 12 & 24321036896 & 17895818664\\
 13 & 217459259352 & 160769210256\\
 14 & 1967105229930 & 1460332969869\\
 15 & 17976521266263 & 13394243081415\\
 16 & 165766191111606 & 123914091078435\\
 17 & 1540875893526165 & 1155197707111140\\
 18 & 14426380369147026 & 10843858784096205\\
 19 & 135942674690757903 & 102426415090389825\\
 20 & 1288538824523879508 & 972948700592516736
\end{array}
$
}
\caption{Values of $A(n)$ and $B(n)$, up to $n=20$}
\label{fig22}
\end{figure}

\section{States and state graph}\label{s3}

\subsection{States}\label{s3.3}

Define the map $\boldsymbol{\pi}_n:\mathbf{2}^{\mathcal{Q}(n+1)}\to
\mathbf{2}^{\mathcal{Q}(n)}$ as follows:
\begin{displaymath}
\boldsymbol{\pi}_n\big((Y_n,Y_{n-1},\dots,Y_1,Y_0)\big)=
(Y_n,Y_{n-1},\dots,Y_1).
\end{displaymath}
Note that $\boldsymbol{\pi}_n$ is, obviously, surjective. 
The map $\boldsymbol{\pi}_n$ restricts to a surjective map
$\boldsymbol{\pi}_n:\mathcal{R}(n+1)\to \mathcal{R}(n)$.

Given $Y\in \mathcal{R}(n)$, we define the {\em state 
$\mathbf{S}(Y)$ of $Y$} as the set of all possible subsets 
$A\subset \{0,1,\dots,n+1\}$ such that
$(Y_n,Y_{n-1},\dots,Y_0,A)\in \mathcal{R}(n+1)$.
The essential information contained in $\mathbf{S}(Y)$
is that of the preimage of $Y$ under $\boldsymbol{\pi}_n$. In more detail,
for $Y\in \mathcal{R}(n)$, mapping
$A\in \mathbf{S}(Y)$ to $(Y_n,Y_{n-1},\dots,Y_0,A)$
defines a bijection between $\mathbf{S}(Y)$
and $\boldsymbol{\pi}_n^{-1}(Y)$. For example, we have
\begin{displaymath}
\mathbf{S}\big((\varnothing)\big)=
\{\varnothing,\{0\},\{1\},\{0,1\}\}
\end{displaymath}
while
\begin{displaymath}
\mathbf{S}\big((\{0\})\big)=
\{\varnothing,\{0\},\{0,1\}\}.
\end{displaymath}
The latter is due to the fact that $(\{0\},\{1\})\not\in\mathcal{R}(1)$.

It will be convenient to set $\mathcal{Q}(-1):=\varnothing$,
so that $\mathbf{2}^{\mathcal{Q}(-1)}=\mathcal{R}(-1)=\{\varnothing\}$.
Also, we set $\mathbf{S}\big(\varnothing\big)=\{\varnothing,\{0\}\}$
(note the difference between $\mathbf{S}\big((\varnothing)\big)$ 
and $\mathbf{S}\big(\varnothing\big)$, where 
we have $(\varnothing)\in \mathcal{R}(0)$
while $\varnothing\in \mathcal{R}(-1)$).
Define $\boldsymbol{\pi}_{-1}$ from
$\mathbf{2}^{\mathcal{Q}(0)}=\mathcal{R}(0)$ to 
$\mathbf{2}^{\mathcal{Q}(-1)}=\mathcal{R}(-1)$
in the only possible way, that is, sending each element of 
$\mathbf{2}^{\mathcal{Q}(0)}$ to the only element of $\mathbf{2}^{\mathcal{Q}(-1)}$.

\subsection{State graph}\label{s3.4}

Consider the infinite edge labeled directed 
graph $\Psi$ defined as follows: 
\begin{itemize}
\item the set of vertices of $\Psi$ is 
$\displaystyle \coprod_{i\geq -1}\mathcal{R}(i)$;
\item for a vertex $v\in \mathcal{R}(i)$, where $i\geq 0$, we have one
oriented edge from  $v$ to  $\boldsymbol{\pi}_{i-1}(v)$
and this edge is labeled by $V_0$, provided that 
$v=(V_i,V_{i-1},\dots,V_0)$.
\end{itemize}
The graph $\Psi$ is a rooted tree with root 
$\{\varnothing\}\in \mathcal{R}(-1)$. Also, for $i\geq -1$,
the elements of  $\mathcal{R}(i)$ are exactly the elements which have 
distance $i+1$ from the root.

Now we want to define a quotient of $\Psi$: 
for $i\geq -1$ and $v,w\in \mathcal{R}(i)$, we 
say that $v\sim_i w$ provided that the set of labels of the 
edges pointing towards $v$ coincides with 
the set of labels of the  edges pointing towards $w$.
More precisely, $v\sim_i w$, provided that 
\begin{displaymath}
\bigcup_{\overset{u=(U_{i+1},U_{i},\dots,U_0)\in \mathcal{R}(i+1)}
{\boldsymbol{\pi}_{i}(u)=v}}\{ U_0\} \quad=\quad
\bigcup_{\overset{u=(U_{i+1},U_{i},\dots,U_0)\in \mathcal{R}(i+1)}
{\boldsymbol{\pi}_{i}(u)=w}}\{ U_0\}.
\end{displaymath}
Clearly, $\sim_i$ is an equivalence relation on $\mathcal{R}(i)$.
For $v\in \mathcal{R}(i)$, we will denote the $\sim_i$-equivalence
class containing $v$ by $v_{\sim_i}$.
Define the quotient $\overline{\Psi}$ as follows:
\begin{itemize}
\item the set of vertices of $\Psi$ is  
$\displaystyle \coprod_{i\geq -1}\left(\mathcal{R}(i)/\sim_i\right)$;
\item for $v\in \mathcal{R}(i)$ and $w\in \mathcal{R}(i-1)$, where $i\geq 0$, 
there is oriented edge from $v_{\sim_i}$ to $w_{\sim_{i-1}}$ if and only
if there exists $v'\in v_{\sim_i}$ and $w'\in w_{\sim_{i-1}}$
such that there is an oriented edge from $v'$ to $w'$
in $\Psi$. This edge from $v_{\sim_i}$ to $w_{\sim_{i-1}}$ has the same
label as the edge from  $v'$ to $w'$.
\end{itemize}

For a positive integer $n$, set $\underline{n}=\{1,2,\dots,n\}$.
We identify the subsets of $\underline{n}$ with $0$-$1$ sequences
of length $n$ in the usual way: a $0$-$1$ sequence
$\mathbf{a}:=a_1a_2\dots a_n$ corresponds to its support, that is, 
the subset $\mathrm{supp}(\mathbf{a})=\{i\,:\, a_i=1\}$ of $\underline{n}$.

Now we define a new infinite graph $\Sigma$ in a recursive way.
This graph will have the following properties:
\begin{itemize}
\item the set of vertices of $\Sigma$ is a disjoint union of the sets
$\Sigma_i$, for $i\geq -1$;
\item the elements of $\Sigma_i$ will be collections (sets) of subsets of 
$\underline{i+2}$, where each such subset is interpreted as a $0$-$1$ sequence
of length $i+2$;
\item oriented edges in $\Sigma$ will only exist from 
elements of $\Sigma_i$ to elements of $\Sigma_{i-1}$,
for $i\geq 0$.
\end{itemize}
We start with the basis of our recurrence:
\begin{itemize}
\item the set $\Sigma_{-1}$ is a singleton, namely, the only element
in $\Sigma_{-1}$ is the set $\{0,1\}$.
\end{itemize}
Now, assume that $\Sigma_i$ is constructed. Then 
we define $\Sigma_{i+1}$ and the edges
from $\Sigma_{i+1}$ to $\Sigma_i$ as follows: 
let $N\in \Sigma_i$ and  $\mathbf{a}=a_1a_2\dots a_{i+2}\in N$.
Let $Y\subset \underline{i+2}$ be the subset 
corresponding to $\mathbf{a}$. Then the following set $M$ belongs to
$\Sigma_{i+1}$ and $\Sigma$ has an edge from $M$ to $N$
labeled by $Y$ (alternatively, by $\mathbf{a}$): the set $M$ consists of  all
$0$-$1$ sequences $\mathbf{b}=b_1b_2\dots b_{n+3}$ which satisfy the following
three conditions:
\begin{enumerate}[$($A$)$]
\item\label{condA} if $b_{n+3}=1$, then, 
$\mathrm{supp}(\mathbf{a})\subset \mathrm{supp}(\mathbf{b})$;
\item\label{condB} if $b_s=1$, for some $s<n+3$, and 
$a_t=1$, for some $t<s$, then $b_t=a_s=1$;
\item\label{condC} $b_1b_2\dots b_{n+2}\in N$.
\end{enumerate}
The part of $\Sigma$ corresponding to the union of the 
$\Sigma_i$, for $i=-1,0,2,3$, is shown in Figure~\ref{fig1}.
We note that the idea to consider $\Sigma$ is due to 
Microsoft Copilot.

\begin{figure}
\resizebox{\textwidth}{!}{
\begin{tikzpicture}[
  statebox/.style={draw=blue!60!black, rounded corners=2pt, fill=blue!5, align=center, inner sep=3pt},
  edgelabel/.style={fill=white, inner sep=1pt, font=\scriptsize},
  levellabel/.style={font=\bfseries\large, align=center}
]

\node[levellabel] at (0.00,15.4) {Level $-1$\\(1 state)};
\node[levellabel] at (5.00,15.4) {Level $0$\\(2 states)};
\node[levellabel] at (12.50,15.4) {Level $1$\\(5 states)};
\node[levellabel] at (31.00,15.4) {Level $2$\\(13 states)};

\node[statebox] (Nm1x0) at (0.00,7.00) {$\{0,1\}$};
\node[statebox] (N0x0) at (5.00,10.00) {{\ttfamily \{00, 10, 01, 11\}}};
\node[statebox] (N0x1) at (5.00,4.00) {{\ttfamily \{00, 10, 11\}}};
\node[statebox] (N1x0) at (12.50,12.50) {{\ttfamily \{000, 100, 010, 110, 001, 101, 011, 111\}}};
\node[statebox] (N1x1) at (12.50,9.50) {{\ttfamily \{000, 100, 101\}}};
\node[statebox] (N1x2) at (12.50,6.50) {{\ttfamily \{000, 100, 010, 110, 011, 111\}}};
\node[statebox] (N1x3) at (12.50,3.50) {{\ttfamily \{000, 100, 110, 111\}}};
\node[statebox] (N1x4) at (12.50,0.50) {{\ttfamily \{000, 100, 110, 001, 101, 111\}}};
\node[statebox] (N2x0) at (31.00,13.50) {{\ttfamily \{0000, 1000, 0100, 1100, 0010, 1010, 0110, 1110, 0001, 1001, 0101, 1101, 0011, 1011, 0111, 1111\}}};
\node[statebox] (N2x1) at (31.00,12.00) {{\ttfamily \{0000, 1000, 1001\}}};
\node[statebox] (N2x2) at (31.00,10.50) {{\ttfamily \{0000, 1000, 0100, 1100, 0101, 1101\}}};
\node[statebox] (N2x3) at (31.00,9.00) {{\ttfamily \{0000, 1000, 1100, 1101\}}};
\node[statebox] (N2x4) at (31.00,7.50) {{\ttfamily \{0000, 1000, 0100, 1100, 0010, 1010, 0110, 1110, 0011, 1011, 0111, 1111\}}};
\node[statebox] (N2x5) at (31.00,6.00) {{\ttfamily \{0000, 1000, 1010, 1011\}}};
\node[statebox] (N2x6) at (31.00,4.50) {{\ttfamily \{0000, 1000, 0100, 1100, 0110, 1110, 0111, 1111\}}};
\node[statebox] (N2x7) at (31.00,3.00) {{\ttfamily \{0000, 1000, 1100, 1110, 1111\}}};
\node[statebox] (N2x8) at (31.00,1.50) {{\ttfamily \{0000, 1000, 1010, 0001, 1001, 1011\}}};
\node[statebox] (N2x9) at (31.00,0.00) {{\ttfamily \{0000, 1000, 0100, 1100, 0110, 1110, 0001, 1001, 0101, 1101, 0111, 1111\}}};
\node[statebox] (N2x10) at (31.00,-1.50) {{\ttfamily \{0000, 1000, 1100, 1110, 0001, 1001, 1101, 1111\}}};
\node[statebox] (N2x11) at (31.00,-3.00) {{\ttfamily \{0000, 1000, 1100, 0010, 1010, 1110, 0001, 1001, 1101, 0011, 1011, 1111\}}};
\node[statebox] (N2x12) at (31.00,-4.50) {{\ttfamily \{0000, 1000, 1100, 0010, 1010, 1110, 0011, 1011, 1111\}}};

\draw[<-, gray] (Nm1x0.east) -- (N0x0.west) node[midway, edgelabel] {{\ttfamily 0}};
\draw[<-, gray] (Nm1x0.east) -- (N0x1.west) node[midway, edgelabel] {{\ttfamily 1}};
\draw[<-, gray] (N0x0.east) -- (N1x0.west) node[midway, edgelabel] {{\ttfamily 00}};
\draw[<-, gray] (N0x0.east) -- (N1x1.west) node[midway, edgelabel] {{\ttfamily 10}};
\draw[<-, gray] (N0x0.east) -- (N1x2.west) node[midway, edgelabel] {{\ttfamily 01}};
\draw[<-, gray] (N0x0.east) -- (N1x3.west) node[near start, edgelabel] {{\ttfamily 11}};
\draw[<-, gray] (N0x1.east) -- (N1x1.west) node[near start, edgelabel] {{\ttfamily 10}};
\draw[<-, gray] (N0x1.east) -- (N1x3.west) node[near start, edgelabel] {{\ttfamily 11}};
\draw[<-, gray] (N0x1.east) -- (N1x4.west) node[midway, edgelabel] {{\ttfamily 00}};
\draw[<-, gray] (N1x0.east) -- (N2x0.west) node[midway, edgelabel] {{\ttfamily 000}};
\draw[<-, gray] (N1x0.east) -- (N2x1.west) node[midway, edgelabel] {{\ttfamily 100}};
\draw[<-, gray] (N1x0.east) -- (N2x2.west) node[midway, edgelabel] {{\ttfamily 010}};
\draw[<-, gray] (N1x0.east) -- (N2x3.west) node[near start, edgelabel] {{\ttfamily 110}};
\draw[<-, gray] (N1x0.east) -- (N2x4.west) node[near start, edgelabel] {{\ttfamily 001}};
\draw[<-, gray] (N1x0.east) -- (N2x5.west) node[near start, edgelabel] {{\ttfamily 101}};
\draw[<-, gray] (N1x0.east) -- (N2x6.west) node[near end, edgelabel] {{\ttfamily 011}};
\draw[<-, gray] (N1x0.east) -- (N2x7.west) node[near end, edgelabel] {{\ttfamily 111}};
\draw[<-, gray] (N1x1.east) -- (N2x1.west) node[midway, edgelabel] {{\ttfamily 100}};
\draw[<-, gray] (N1x1.east) -- (N2x5.west) node[near start, edgelabel] {{\ttfamily 101}};
\draw[<-, gray] (N1x1.east) -- (N2x8.west) node[near start, edgelabel] {{\ttfamily 000}};
\draw[<-, gray] (N1x2.east) -- (N2x1.west) node[near end, edgelabel] {{\ttfamily 100}};
\draw[<-, gray] (N1x2.east) -- (N2x2.west) node[near end, edgelabel] {{\ttfamily 010}};
\draw[<-, gray] (N1x2.east) -- (N2x3.west) node[near end, edgelabel] {{\ttfamily 110}};
\draw[<-, gray] (N1x2.east) -- (N2x6.west) node[near start, edgelabel] {{\ttfamily 011}};
\draw[<-, gray] (N1x2.east) -- (N2x7.west) node[near start, edgelabel] {{\ttfamily 111}};
\draw[<-, gray] (N1x2.east) -- (N2x9.west) node[near start, edgelabel] {{\ttfamily 000}};
\draw[<-, gray] (N1x3.east) -- (N2x1.west) node[very near start, edgelabel] {{\ttfamily 100}};
\draw[<-, gray] (N1x3.east) -- (N2x3.west) node[midway, edgelabel] {{\ttfamily 110}};
\draw[<-, gray] (N1x3.east) -- (N2x7.west) node[midway, edgelabel] {{\ttfamily 111}};
\draw[<-, gray] (N1x3.east) -- (N2x10.west) node[midway, edgelabel] {{\ttfamily 000}};
\draw[<-, gray] (N1x4.east) -- (N2x1.west) node[midway, edgelabel] {{\ttfamily 100}};
\draw[<-, gray] (N1x4.east) -- (N2x3.west) node[near start, edgelabel] {{\ttfamily 110}};
\draw[<-, gray] (N1x4.east) -- (N2x5.west) node[very near start, edgelabel] {{\ttfamily 101}};
\draw[<-, gray] (N1x4.east) -- (N2x7.west) node[near start, edgelabel] {{\ttfamily 111}};
\draw[<-, gray] (N1x4.east) -- (N2x11.west) node[midway, edgelabel] {{\ttfamily 000}};
\draw[<-, gray] (N1x4.east) -- (N2x12.west) node[midway, edgelabel] {{\ttfamily 001}};
\end{tikzpicture}
}
\caption{The initial part of the graph $\Sigma$}
\label{fig1}
\end{figure}

We aim to prove the following:

\begin{theorem}\label{thm1}
Sending $Y\in\mathcal{R}(i)$ to $\mathbf{S}(Y)$, for $i\geq -1$,
gives rise to an isomorphism between the labeled oriented
graphs $\overline{\Psi}$ and $\Sigma$.
\end{theorem}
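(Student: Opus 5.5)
The plan is to reduce Theorem~\ref{thm1} to a single local lemma. The lemma says that the state of a child of $Y$ in $\Psi$ is determined by the state $\mathbf{S}(Y)$ together with the label of the edge, via exactly the rule (A)--(C) that defines $\Sigma$. Throughout, I identify a subset $A\subset\{0,1,\dots,i+1\}$ with the $0$-$1$ sequence $a_1a_2\dots a_{i+2}$ with $a_{t+1}=1$ iff $t\in A$. This is the index shift between $\mathbf{S}(Y)$ and the elements of $\Sigma_i$. I also read the ``$n+3$'' in the definition of $\Sigma$ as $i+3$.

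First, by the very definition of $\sim_i$, the set of labels of edges into $v$ is exactly $\mathbf{S}(v)$. Hence $v\sim_i w$ iff $\mathbf{S}(v)=\mathbf{S}(w)$, so $v_{\sim_i}\mapsto\mathbf{S}(v)$ is well defined and injective on vertices of $\overline{\Psi}$. For the base case, $\mathcal{R}(-1)=\{\varnothing\}$ and $\mathbf{S}(\varnothing)=\{\varnothing,\{0\}\}$, which corresponds to the two sequences $0$ and $1$, i.e.\ to the unique element of $\Sigma_{-1}$.

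\textbf{Key Lemma.} Let $Y\in\mathcal{R}(n)$, let $A\in\mathbf{S}(Y)$, and set $Y'=(Y_n,\dots,Y_0,A)\in\mathcal{R}(n+1)$. Then $B\subset\{0,\dots,n+2\}$ lies in $\mathbf{S}(Y')$ if and only if the following three conditions hold:
\begin{enumerate}
\item[(A$'$)] if $n+2\in B$ then $A\subset B$;
\item[(B$'$)] if $s\in B$ with $s<n+2$ and $t\in A$ with $t<s$, then $t\in B$ and $s\in A$;
\item[(C$'$)] $B\cap\{0,\dots,n+1\}\in\mathbf{S}(Y)$.
\end{enumerate}

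To prove the lemma, consider $Z=(Y,A,B)\subset\mathcal{Q}(n+2)$, with row $0$ equal to $B$, row $1$ equal to $A$, and row $j\geq2$ equal to $Y_{j-2}$. I check Condition~$(\star)$ pair by pair.
\begin{itemize}
\item Pairs avoiding row $0$ are fine, since $Y'\in\mathcal{R}(n+1)$.
\item Pairs inside row $0$ are trivial, because the min and max points are the two given points.
\item For a pair $(s,0),(c,d)$ with $d\geq2$, shift row $d$ down by one to compare with $(Y,B\cap\{0,\dots,n+1\})\in\mathbf{S}(Y)$ viewed inside $\mathcal{Q}(n+1)$. The inequality $\max(s,c)+d\le n+3$ becomes $\max(s,c)+(d-1)\le n+2$. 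The min point stays in row $0$, and the max point lands in the same row $Y_{d-2}$ in both pictures. Membership in $\mathcal{Q}$ transforms the same way. Finally, $s=n+2$ makes the hypothesis vacuous in $Z$. So these pairs give exactly (C$'$).
\item For a pair $(s,0),(t,1)$, the case $t\geq s$ is automatic. If $t<s\le n+1$, the requirement is $t\in B$ and $s\in A$, which is (B$'$). If $s=n+2$, the point $(n+2,1)$ is outside $\mathcal{Q}$, and the requirement $t\in B$ for all $t\in A$ is (A$'$).
\end{itemize}
Translating via the index shift, the lemma says $\mathbf{S}(Y')$ is precisely the set $M$ constructed from $N=\mathbf{S}(Y)$ and $\mathbf{a}=A$ by (A)--(C).

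Now I run an induction on $i$, proving that $Y\mapsto\mathbf{S}(Y)$ maps $\mathcal{R}(i)$ onto $\Sigma_i$ and that edges correspond with labels. Edges of $\overline{\Psi}$ from level $i+1$ to level $i$ are exactly the pairs $(\mathbf{S}(Y'),\mathbf{S}(Y))$ with label $A$, where $Y'=(Y,A)$ and $A$ ranges over $\mathbf{S}(Y)$. By the lemma and the induction hypothesis, these are exactly the pairs $(M,N)$ with label $\mathbf{a}\in N$ that appear in the definition of $\Sigma$. This gives surjectivity onto $\Sigma_{i+1}$ and the bijection on labeled edges simultaneously. I expect the main obstacle to be the careful bookkeeping in the lemma: the row shift in (C$'$), including checking that the max point and the $\mathcal{Q}$-membership condition transform correctly, and the boundary case $s=n+2$ giving (A$'$).
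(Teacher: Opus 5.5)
Your proposal is correct and follows essentially the paper's route. Your Key Lemma is precisely the identity $N=\mathbf{S}(Y)$ underlying Lemmas~\ref{lem1} and~\ref{lem2}, and your induction packages those two lemmas together with the edge comparison from the proof of Theorem~\ref{thm1}, taking injectivity on vertices directly from $v\sim_i w$ if and only if $\mathbf{S}(v)=\mathbf{S}(w)$; your pair-by-pair check of Condition~$(\star)$ (the row shift giving (C$'$) and the boundary case $s=n+2$ giving (A$'$)) spells out details the paper only sketches.
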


To prove Theorem~\ref{thm1}, we need some preparation.

\begin{lemma}\label{lem1}
For any $i\geq -1$ and any $Y\in \mathcal{R}(i)$, we have
$\mathbf{S}(Y)\in \Sigma_i$.
\end{lemma}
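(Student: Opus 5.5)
We argue by induction on $i$, and we prove the stronger statement needed later for Theorem~\ref{thm1}: for $i\geq 0$ and $Y=(Y_i,\dots,Y_0)\in\mathcal{R}(i)$, put $Y':=\boldsymbol{\pi}_{i-1}(Y)\in\mathcal{R}(i-1)$, $N:=\mathbf{S}(Y')$ and $\mathbf{a}:=Y_0$. Then $\mathbf{S}(Y)$ equals the set $M$ built from $N$ and $\mathbf{a}$ by Conditions~(\ref{condA})--(\ref{condC}). Throughout, a subset of $\{0,1,\dots,i+1\}$ is identified with a $0$-$1$ sequence of length $i+2$ by the shift $x\mapsto x+1$. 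In particular, $Y_0\in N$ by definition of $N$, so $\mathbf{a}$ is a legitimate label. The base case $i=-1$ is the convention $\mathbf{S}(\varnothing)=\{\varnothing,\{0\}\}$, which corresponds to $\{0,1\}$, the unique element of $\Sigma_{-1}$. Granting the claim, the inductive step is immediate: $N\in\Sigma_{i-1}$ by induction and $\mathbf{a}\in N$, so $M\in\Sigma_i$ by the recursive definition of $\Sigma$.

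To prove the claim, fix $A\subset\{0,\dots,i+1\}$ and ask when $Z:=(Y_i,\dots,Y_0,A)\in\mathcal{R}(i+1)$. In $Z$ the old rows are raised by one and $A$ becomes the new bottom row. We sort the instances of Condition~$(\star)$ for $Z$ into three kinds.
\begin{itemize}
\item Pairs of points both in old rows. Raising both points by one leaves $\max+\max$ and membership in $\mathcal{Q}$ unchanged relative to $n+1$, so these conditions hold because $Y\in\mathcal{R}(i)$.
\item Pairs of points both in $A$. These are trivially satisfied.
\item Mixed pairs $(s,0)\in A$ and $(t,h)\in Y$ with $h\geq1$. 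When $s<t$, the required points are the given ones. When $s\geq t$ and $s+h\leq i+2$, we need $(t,0)\in A$ and, if it lies in $\mathcal{Q}(i+1)$, $(s,h)\in Y$.
\end{itemize}
Restricting the mixed pairs to $h=1$, i.e.\ interaction with the row $\mathbf{a}=Y_0$, produces exactly Condition~(\ref{condB}) for $s<i+1$ (with the index shift). For the extreme point $s=i+1$, only $t$ with $t\leq i+1$ in row $1$ is relevant, and it forces all of $\mathrm{supp}(\mathbf{a})$ into $A$; this is Condition~(\ref{condA}).

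The remaining mixed pairs have $h\geq2$. We want to show they are equivalent to Condition~(\ref{condC}), namely that $A$ with its last entry deleted, viewed as a bottom row of length $i+1$, lies in $\mathbf{S}(Y')$; that is, $(Y_i,\dots,Y_1,A\setminus\{i+1\})\in\mathcal{R}(i)$. We will first observe that the point $(i+1,0)$ has no interaction with rows of height $h\geq2$, since $\max(s,t)+h\leq i+2$ fails. For the other points, we compare the mixed inequality $\max(s,t)+h\leq i+2$ in $Z$ with the corresponding inequality for the row at height $h-1$ in the smaller configuration. We will also check that the row $Y_0$ itself imposes nothing extra once it is removed, using $Y\in\mathcal{R}(i)$ again.

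This comparison is the main obstacle. The heights differ by one, while the bound $n+1$ also drops by one, so the inequalities should match exactly; one must also check that the condition ``the max point lies in $\mathcal{Q}$'' transforms consistently. We will carry this out by writing both sets of conditions explicitly in coordinates and matching them term by term. Having established the three kinds of conditions, we obtain $A\in\mathbf{S}(Y)$ if and only if Conditions~(\ref{condA}), (\ref{condB}) and (\ref{condC}) hold, i.e.\ $\mathbf{S}(Y)=M$, which completes the induction.
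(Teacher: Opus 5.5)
Your proposal is correct and follows essentially the same route as the paper. Both arguments use induction on $i$ and prove the stronger claim that $\mathbf{S}(Y)$ equals the vertex of $\Sigma_i$ built from $\mathbf{S}(\boldsymbol{\pi}_{i-1}(Y))$ and the label $Y_0$. Both do this by sorting the instances of Condition~$(\star)$ for $(Y_i,\dots,Y_0,A)$ into three kinds: pairs within the old rows (automatic), pairs involving the row $Y_0$ (which give Conditions~(A) and~(B)), and pairs involving the higher rows (which give Condition~(C)).

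The term-by-term matching you defer for heights $h\geq 2$ does go through as you indicate, because the height and the bound both drop by one and the point $(i+1,0)$ never interacts with those rows. The paper simply asserts this step as well.
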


\begin{proof}
We use induction on $i$. The basis $i=-1$ follows directly from the
definitions. To prove the induction step, let 
$Y\in \mathcal{R}(i)$, for some $i>-1$. Then
$\boldsymbol{\pi}_{i-1}(Y)\in \mathcal{R}(i)$ and hence,
by induction, we have 
$\mathbf{S}(\boldsymbol{\pi}_{i-1}(Y))\in \Sigma_{i-1}$.
If $Y=(Y_i,Y_{i-1},\dots,Y_0)$, then, by definition,
$\boldsymbol{\pi}_{i-1}(Y)=(Y_i,Y_{i-1},\dots,Y_1)$ and 
$Y_0\in \mathbf{S}(\boldsymbol{\pi}_{i-1}(Y))$. By our recursive
construction of $\Sigma$, the set $\Sigma_i$ contains 
an element $N$ and an edge from $N$ to $\mathbf{S}(\boldsymbol{\pi}_{i-1}(Y))$
labeled by $Y_0$. This $N$ consists of all $0$-$1$ strings of length
$i+2$ which extend the strings from $\mathbf{S}(\boldsymbol{\pi}_{i-1}(Y))$
by adding on symbol on the right and are compatible with $Y_0$ in the
sense of Conditions~\eqref{condA} and \eqref{condB}. We claim that
$N=\mathbf{S}(Y)$, which implies the claim of the lemma.

To prove that $N=\mathbf{S}(Y)$, let $X$ be a subset of $\{0,1,\dots,i+1\}$ 
such that we have
$(Y_i,Y_{i-1},\dots,Y_0,X)\in \mathcal{R}(i+1)$. Note that
Condition~$(\star)$ applied to any $Y_s$ and $Y_t$ is automatic as 
$Y\in \mathcal{R}(i)$. Further, Condition~$(\star)$
applied to $Y_0$ and $X$ translates exactly into the compatibility described in
Conditions~\eqref{condA} and \eqref{condB}.
And, finally, Condition~$(\star)$ applied to any $Y_s$, for $s>0$,
and $X$ says exactly that, erasing the rightmost letter from $X$,
we get some element in $\mathbf{S}(\boldsymbol{\pi}_{i-1}(Y))$, 
which should be compared with Condition~\eqref{condC}.
Now $N=\mathbf{S}(Y)$ follows from the definitions.
\end{proof}

\begin{lemma}\label{lem2}
The set $\Sigma_i$ coincides with the set
$\{\mathbf{S}(Y)\,:\, Y\in \mathcal{R}(i)\}$.
\end{lemma}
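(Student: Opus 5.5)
Lemma~\ref{lem1} already gives the inclusion $\{\mathbf{S}(Y)\,:\,Y\in\mathcal{R}(i)\}\subseteq\Sigma_i$, so only the reverse inclusion $\Sigma_i\subseteq\{\mathbf{S}(Y)\,:\,Y\in\mathcal{R}(i)\}$ needs proof. I will show this by induction on $i$, following the recursive construction of $\Sigma$. For the base case $i=-1$, $\Sigma_{-1}$ consists of the single element $\{0,1\}$, which under our conventions is $\mathbf{S}(\varnothing)$ for the unique element $\varnothing\in\mathcal{R}(-1)$.

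For the induction step, take $M\in\Sigma_{i}$ with $i\geq 0$. By construction, $M$ arises from some $N\in\Sigma_{i-1}$ together with a string $\mathbf{a}\in N$. Concretely, $M$ is the set of all strings satisfying Conditions~\eqref{condA}, \eqref{condB} and \eqref{condC} with respect to $\mathbf{a}$ and $N$. By the induction hypothesis, $N=\mathbf{S}(Z)$ for some $Z=(Z_{i-1},\dots,Z_0)\in\mathcal{R}(i-1)$. Let $A\subset\{0,1,\dots,i\}$ be the subset corresponding to $\mathbf{a}$, shifting indices by one as in the identification used in Lemma~\ref{lem1}. Since $\mathbf{a}\in N=\mathbf{S}(Z)$, the definition of the state says exactly that
\begin{displaymath}
Y:=(Z_{i-1},\dots,Z_0,A)\in\mathcal{R}(i),
\end{displaymath}
and moreover $\boldsymbol{\pi}_{i-1}(Y)=Z$.

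The proof of Lemma~\ref{lem1} then identifies $\mathbf{S}(Y)$. It shows that $\mathbf{S}(Y)$ equals the element of $\Sigma_i$ obtained from $\mathbf{S}(\boldsymbol{\pi}_{i-1}(Y))=N$ and the label $Y_0=A$ via Conditions~\eqref{condA}--\eqref{condC}. That element is precisely $M$, so $M=\mathbf{S}(Y)$ and the induction is complete.

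The main point to check is that the recursive rule defining $M$ depends only on the pair $(N,\mathbf{a})$ and not on any choice of representative $Z$. This is what allows the argument to use whatever $Z$ the induction hypothesis supplies. It holds because Conditions~\eqref{condA}--\eqref{condC} refer only to $N$ and $\mathbf{a}$. The other thing to verify is the index bookkeeping between $0$-$1$ strings on $\underline{i+2}$ and subsets of $\{0,\dots,i+1\}$, which must match the convention already used in Lemma~\ref{lem1}.
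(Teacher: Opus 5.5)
Your proposal is correct and follows essentially the same route as the paper. The paper also argues by induction on $i$: it realizes the target vertex as $\mathbf{S}(Z)$ via the induction hypothesis, uses the label $\mathbf{a}\in\mathbf{S}(Z)$ to form $Y=(Z_{i-1},\dots,Z_0,A)\in\mathcal{R}(i)$, and concludes $M=\mathbf{S}(Y)$ by the same comparison of Conditions~(A)--(C) with Condition~$(\star)$ carried out at the end of the proof of Lemma~\ref{lem1}.
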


\begin{proof}
From Lemma~\ref{lem1}, we have 
$\{\mathbf{S}(Y)\,:\, Y\in \mathcal{R}(i)\}\subset \Sigma_i$.
We now prove that the set
$\Sigma_i\setminus \{\mathbf{S}(Y)\,:\, Y\in \mathcal{R}(i)\}$
is empty, by induction on $i$. The basis of the induction is clear.

To prove the induction step, let $N\in \Sigma_i$, for some  $i\geq 0$. 
Then, by construction of $\Sigma$, there is $M\in \Sigma_{i-1}$ and 
$Q\in M$ such that there is an oriented edge from $N$ to $M$
in $\Sigma$ labeled by $Q$. By induction, we know that 
$M=\mathbf{S}(X)$, for some element
$X=(X_{i-1},X_{i-2},\dots,X_0)\in \mathcal{R}(i-1)$. 
Since $Q\in\mathbf{S}(X)$, we have 
$(X_{i-1},X_{i-2},\dots,X_0,Q)\in \mathcal{R}(i)$.
We claim that $N=\mathbf{S}((X_{i-1},X_{i-2},\dots,X_0,Q))$.
Verification of this amounts to comparing the definition of 
$N$ with Condition~$(\star)$ similarly to the last 
paragraph in the proof of Lemma~\ref{lem1}.
\end{proof}

Due to Lemma~\ref{lem2}, the graph $\Sigma$ is called the {\em state graph}.

\begin{proof}[Proof of Theorem~\ref{thm1}]
By Lemma~\ref{lem1}, sending $Y\in\mathcal{R}(i)$ to $\mathbf{S}(Y)$, 
for $i\geq -1$, gives rise to a well-defined map $\mathbf{f}$ from the set of 
vertices of $\Psi$ to the set of vertices of $\Sigma$.
Given an edge from some $Y\in\mathcal{R}(i)$ to some $X\in\mathcal{R}(i-1)$
in $\Psi$, this edge is labeled by $Y_0$, if $Y=(Y_i,Y_{i-1},\dots,Y_0)$
and $X=(Y_i,Y_{i-1},\dots,Y_1)$. We therefore have $Y_0\in\mathbf{S}(X)$
and from the proof of Lemma~\ref{lem2} we deduce that there is 
an edge from $\mathbf{S}(Y)$  to $\mathbf{S}(X)$ in 
$\Sigma$ labeled by $Y_0$. Therefore $\mathbf{f}$ is 
a graph homomorphism from $\Psi$ to $\Sigma$ which is surjective
both at the level of vertices and at the level of edges. 

Directly from the definitions, we see that  $\mathbf{f}$ factors through 
$\overline{\Psi}$. We denote by $\overline{\mathbf{f}}$ the induced map from
$\overline{\Psi}$ to $\Sigma$. From Lemma~\ref{lem1}, it follows that 
$\overline{\mathbf{f}}$ is bijective at the level of vertices. 
From the previous paragraph, $\overline{\mathbf{f}}$ is surjective
at the level of labeled oriented edges. Since $\Sigma$ has at most
one oriented edge with a given label between two vertices,
it follows that $\overline{\mathbf{f}}$ is a graph isomorphism, as claimed.
\end{proof}

\begin{proposition}\label{prop4}
The underlying unlabeled oriented graph of $\Sigma$
is simple (i.e. has no double edges). 
\end{proposition}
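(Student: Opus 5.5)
The plan is to show that the label of any edge out of a vertex $M\in\Sigma_{i+1}$ is determined by $M$ alone. Then there can be at most one edge between $M$ and a given $N\in\Sigma_i$, since two such edges would have to carry distinct labels. Recall that in the recursive construction an edge from $M$ to $N$ arises from a pair $(N,\mathbf{a})$ with $\mathbf{a}\in N$, and that $M$ is then cut out by Conditions~\eqref{condA}, \eqref{condB}, \eqref{condC}. So it suffices to recover $\mathbf{a}$ from $M$.

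The key step is the claim that, for such $M$, the sequence $\mathbf{a}1$ (that is, $\mathbf{a}$ with the letter $1$ appended on the right) is the unique element of
\begin{displaymath}
M^{1}:=\{\mathbf{b}\in M\,:\, \text{the last letter of } \mathbf{b} \text{ is } 1\}
\end{displaymath}
whose support is minimal with respect to inclusion. In fact it will be the least element of $M^1$. This splits into two checks.

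\emph{Least-ness.} By Condition~\eqref{condA}, every $\mathbf{b}\in M^{1}$ satisfies $\mathrm{supp}(\mathbf{a})\subset\mathrm{supp}(\mathbf{b})$. Since $\mathbf{b}$ also has its last letter equal to $1$, it follows that $\mathrm{supp}(\mathbf{a}1)\subset\mathrm{supp}(\mathbf{b})$.

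\emph{Membership $\mathbf{a}1\in M$.} Condition~\eqref{condC} holds because $\mathbf{a}\in N$. Condition~\eqref{condA} holds trivially. For Condition~\eqref{condB}, suppose $b_s=1$ for some position $s$ other than the last one, and $a_t=1$ for some $t<s$. Since $\mathbf{b}=\mathbf{a}1$, we have $b_t=a_t=1$ and $a_s=b_s=1$, as required.

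Given the claim, $\mathbf{a}$ is obtained from $M$ by taking the least element of $M^{1}$ and deleting its last letter. Hence any two edges out of $M$ carry the same label $\mathbf{a}$. Their targets are then both the vertex $N$ from which $M$ was built, so the underlying unlabeled graph has no double edges. In fact this shows more: every vertex of $\Sigma_{i+1}$ has a single label on all its outgoing edges.

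I do not expect a real obstacle. The only point needing care is the verification of Condition~\eqref{condB} for $\mathbf{a}1$, which should go through by the direct check above.
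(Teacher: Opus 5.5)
Your proof is correct and is essentially the paper's. The paper recovers the label $q$ as the inclusion-minimum of $\Lambda=\{z \text{ in the target} : z1 \text{ in the source}\}$, using Condition~(A) and the fact that $q1$ lies in the source; since Condition~(C) makes the restriction to the target automatic, this is exactly your least element of $M^{1}$ with its last letter deleted (the paper states this source-only version as Corollary~\ref{cor8}).

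You should, however, delete the sentence asserting that all edges out of $M$ end at the vertex $N$ from which $M$ was built, because equal labels do not force equal targets. In Figure~\ref{fig1}, the vertex {\ttfamily \{0000, 1000, 1001\}} has edges labelled {\ttfamily 100} to all five vertices of $\Sigma_1$. Nothing depends on that sentence: your first paragraph already draws the right conclusion, namely that two edges from $M$ to the same $N$ would need distinct labels, which is impossible once the label is a function of $M$.
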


The following proof is, essentially, due to Microsoft Copilot.

\begin{proof}
Let $i\geq 0$, $N\in\Sigma_i$ and $M\in\Sigma_{i-1}$ be such that
we have an edge labeled by $q$ from $N$ to $M$. Then
$q\in M$ by construction. Consider the set $\Lambda$ of
all elements $z\in M$ such that $z1\in N$. 
Note that $q\in \Lambda$ since the pair $q$ and $q1$ clearly
satisfies Conditions~\eqref{condA} and \eqref{condB}
(note that Condition~\eqref{condC} is automatically satisfied
for all elements in $M$).

Since $z1$ ends with $1$, from Condition~\eqref{condA}
it follows that $\mathrm{supp}(q)\subset \mathrm{supp}(z)$,
for any $z\in\Lambda$. Therefore $q$ is the minimum 
element of $\Lambda$ with respect to inclusion of supports.
This means that we managed to recover the label of our edge 
from $N$ to $M$ purely in terms of $N$ and $M$
as the definition of $\Lambda$ does not use  our label $q$.
The claim of the proposition follows.
\end{proof}

\begin{corollary}\label{cor7}
For $n\geq 0$, the number $R_n$ coincides with the total number of
different oriented paths in $\Sigma$ from
vertices in $\Sigma_n$ to the unique vertex in $\Sigma_{-1}$.
\end{corollary}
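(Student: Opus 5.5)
The plan is to first prove the analogous statement for the tree $\Psi$, and then transfer it to $\Sigma$ using Theorem~\ref{thm1} and Proposition~\ref{prop4}.

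\textbf{Step 1: the tree $\Psi$.} In $\Psi$ every vertex $v\in\mathcal{R}(i)$, $i\geq 0$, has exactly one outgoing edge, namely to $\boldsymbol{\pi}_{i-1}(v)$. Hence from each $Y\in\mathcal{R}(n)$ there is exactly one oriented path to the root, of length $n+1$. So the number of such paths in $\Psi$ starting at level $n$ equals $|\mathcal{R}(n)|=R_n$.

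\textbf{Step 2: a bijection of paths.} I will match paths in $\Psi$ with paths in $\Sigma$ that start in $\Sigma_n$ and end at $\Sigma_{-1}$.

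A path in $\Psi$ from $Y=(Y_n,\dots,Y_0)$ to the root traverses edges labeled $Y_0,Y_1,\dots,Y_n$. By Theorem~\ref{thm1} (more precisely, the homomorphism $\mathbf{f}$ in its proof), this path maps to a path in $\Sigma$ from $\mathbf{S}(Y)$ to $\Sigma_{-1}$ with the same sequence of labels.

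The map is injective. The label sequence $(Y_0,\dots,Y_n)$ determines $Y$ completely, and labels are preserved by $\mathbf{f}$.

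For surjectivity, take a path $N_n\to N_{n-1}\to\dots\to N_{-1}$ in $\Sigma$. Read its labels $Q_n,\dots,Q_0$ backwards from the root and reconstruct $X\in\mathcal{R}(n)$ inductively, using the argument of Lemma~\ref{lem2}:
\begin{itemize}
\item if $N_{j-1}=\mathbf{S}(X')$ and the edge $N_j\to N_{j-1}$ is labeled $Q$, then $Q\in\mathbf{S}(X')$;
\item then $X'$ extended by $Q$ lies in $\mathcal{R}(j)$, and its state is $N_j$.
\end{itemize}
This produces a $\Psi$-path whose image is the given path.

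\textbf{Step 3: why labels suffice.} The delicate point is that paths in $\Sigma$ should be counted as sequences of edges, and that these are determined by their vertices. Proposition~\ref{prop4} guarantees exactly this: $\Sigma$ has no double edges, so each edge label is recovered from its endpoints. Consequently, a path in $\Sigma$ as a sequence of vertices determines its label sequence, and hence determines $X$.

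Combining Steps 1–3, the number of oriented paths in $\Sigma$ from $\Sigma_n$ to $\Sigma_{-1}$ equals $R_n$.

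I expect this surjectivity/injectivity bookkeeping in Step 3 to be the only real obstacle. It reduces to the inductive identification in the proof of Lemma~\ref{lem2}, namely that the state of $X'$ extended by $Q$ is uniquely determined by $\mathbf{S}(X')$ and $Q$.
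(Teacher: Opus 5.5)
Your argument is correct and is essentially the paper's: count the root-paths in the tree $\Psi$ (one for each element of $\mathcal{R}(n)$), then transport them to $\Sigma$ via the state map. The only difference is that you go directly through the homomorphism $\mathbf{f}$ rather than through $\overline{\Psi}\cong\Sigma$. In doing so you make explicit two points that the paper compresses into ``from the quotient construction'': the unique path-lifting, coming from the identification in the proof of Lemma~\ref{lem1} of the constructed vertex with $\mathbf{S}$ of the extended element, and the role of Proposition~\ref{prop4}.
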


\begin{proof}
It is clear from the definitions that the number $R_n$ 
coincides with the total number of different oriented paths in $\Psi$ from
vertices in $\mathcal{R}(n)$ to the unique vertex in $\mathcal{R}(-1)$.
From the quotient construction, it follows that $R_n$ 
coincides with the total number of different oriented paths in 
$\overline{\Psi}$ from vertices in $\mathcal{R}(n)/\sim_n$ to 
the unique vertex in $\mathcal{R}(-1)/\sim_{-1}$.
Since $\overline{\Psi}$ is isomorphic to $\Sigma$ by Theorem~\ref{thm1},
we get the claim.
\end{proof}

\begin{corollary}\label{cor8}
For any vertex in $\Sigma$, all arrows outgoing from this vertex
have the same label.
\end{corollary}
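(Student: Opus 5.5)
The plan is to exploit the fact that the label of an edge can be read off from its source vertex alone, not merely from the pair of endpoints as in Proposition~\ref{prop4}. Let $N\in\Sigma_i$, with $i\geq 0$, and suppose there is an edge $N\to M$ labeled by $q$. By construction, $N$ is determined by $M$ and $q$ through Conditions~\eqref{condA}--\eqref{condC}. The aim is to recover $q$ from $N$ alone.

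\emph{Step 1 (basic observations about $N$).} By Condition~\eqref{condC}, the set $M$ is exactly the set of prefixes obtained by deleting the last letter of the elements of $N$. Indeed, every $z\in M$ extends to $z0\in N$: Condition~\eqref{condA} is vacuous for $z0$, and Condition~\eqref{condB} concerns only positions $s$ smaller than the last one. Checking Condition~\eqref{condB} for $z0$ against $q$ is the subtle point; it is harmless when we look at $z1$ with $z$ ranging over supersets of $q$, as in the proof of Proposition~\ref{prop4}. So I would avoid relying on $z0\in N$ for all $z\in M$ and instead work with the set
\[
\Lambda(N):=\{z\,:\, z1\in N\},
\]
which is defined purely in terms of $N$.

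\emph{Step 2 (recovering the label).} The proof of Proposition~\ref{prop4} shows that $q\in\Lambda(N)$, namely $q1\in N$. It also shows, via Condition~\eqref{condA}, that $\mathrm{supp}(q)\subset\mathrm{supp}(z)$ for every $z$ with $z1\in N$. Hence $q$ is the unique inclusion-minimum of $\Lambda(N)$. That proof used $\Lambda$ as a subset of $M$, but membership $z\in M$ is automatic once $z1\in N$, by Condition~\eqref{condC}. Consequently $q=\min\Lambda(N)$ depends only on $N$.

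\emph{Step 3 (conclusion).} If $N$ has two outgoing edges, to $M$ labeled $q$ and to $M'$ labeled $q'$, then Step~2 gives $q=\min\Lambda(N)=q'$. So all outgoing arrows carry the same label.

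The only point needing care is Step~2: confirming that $q1$ really satisfies Conditions~\eqref{condA}--\eqref{condC} relative to $(M,q)$. Condition~\eqref{condA} holds trivially. Condition~\eqref{condB} asks that whenever $q_s=1$ and $q_t=1$ with $t<s$, we have $q_t=q_s=1$, which is immediate. Condition~\eqref{condC} holds because $q\in M$. This was already verified in Proposition~\ref{prop4}, so the corollary should follow directly from that argument.
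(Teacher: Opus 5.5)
Your argument is correct and is essentially the paper's: the label is recovered from the source vertex alone as the inclusion-minimal $z$ with $z1$ in that vertex, which is exactly what the paper reads off from the proof of Proposition~\ref{prop4}. The only blemish is the opening claim of your Step~1, that $M$ is the set of truncations of elements of $N$, which is false in general (e.g.\ the edge $\{000,100,101\}\to\{00,10,01,11\}$ labeled $10$ in Figure~\ref{fig1}), but your final argument never uses it.
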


\begin{proof}
From the proof of Proposition~\ref{prop4}, we see that this label
is the following: take our vertex $Q$ and look at all 
$0$-$1$-strings in $Q$ which end with $1$. Among these strings,
there is a unique one with minimal support. Take this string
and delete the rightmost $1$. This is our label.
\end{proof}

Corollary~\ref{cor8} says that the labeling information in $\Sigma$
is, in fact, redundant. One can easily check the assertion
of Corollary~\ref{cor8} on the example provided by Figure~1. 
 We observe that different vertices of $\Sigma$ might have the same
labels for the outgoing edges.

\section{Connection to the Fibonacci sequence}\label{s4}

\subsection{Fibonacci sequence}\label{s4.1}

Recall the Fibonacci sequence $F_n$, where $n\geq 0$,
defined by $F_0=0$, $F_1=1$ and $F_n=F_{n-1}+F_{n-2}$,
for $n\geq 2$, see sequence A000045 in \cite{OEIS}.

We would need the subsequence of the Fibonacci sequence
given by odd indices. Therefore, we define 
$G_n:=F_{2n+1}$, for $n\geq 0$. Note that 
$G_0=1$, $G_1=2$ and $G_n=3G_{n-1}-G_{n-2}$, for $n\geq 2$.

\subsection{Observation}\label{s4.2}

The following observation is due to Microsoft Copilot on
the basis of computations for small $n$: For $n\geq -1$, 
we have $|\Sigma_n|=G_{n+1}$. Our aim in this section is to
prove this for all $n$. The version of Microsoft Copilot
I used was not able to do that.

\subsection{Another recurrence for $G_n$}\label{s4.3}

We will need the following elementary observation:

\begin{proposition}\label{prop21}
We have $G_0=1$, $G_1=2$, $G_2=5$ and
\begin{displaymath}
G_n=2^n-2^{n-2}+G_{n-1}+\sum_{i=1}^{n-2}G_{i}\cdot 2^{n-2-i}, 
\end{displaymath}
for $n>2$.
\end{proposition}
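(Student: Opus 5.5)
Since $G_n=F_{2n+1}$ and $G_n=3G_{n-1}-G_{n-2}$ are already recorded above, I will prove the identity by induction on $n$. The first step is to rewrite it in a form that removes the growing sum. Set
\begin{displaymath}
T_n:=\sum_{i=1}^{n-2}G_i\,2^{n-2-i}, \qquad\text{so that}\qquad T_{n+1}=2T_n+G_{n-1}.
\end{displaymath}
The claim reads $G_n=3\cdot 2^{n-2}+G_{n-1}+T_n$ for $n>2$. I will check the base case $n=3$ directly: $3\cdot 2+G_2+G_1=6+5+2=13=F_7=G_3$.

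For the inductive step I assume the identity for some $n\ge 3$ and compute
\begin{displaymath}
3\cdot 2^{n-1}+G_n+T_{n+1} = 2\bigl(3\cdot 2^{n-2}+T_n\bigr)+G_n+G_{n-1}.
\end{displaymath}
By the induction hypothesis, $3\cdot 2^{n-2}+T_n=G_n-G_{n-1}$. Substituting gives
\begin{displaymath}
2(G_n-G_{n-1})+G_n+G_{n-1}=3G_n-G_{n-1}=G_{n+1},
\end{displaymath}
which is exactly the claim for $n+1$. The values $G_0=1$, $G_1=2$, $G_2=5$ are just $F_1,F_3,F_5$.

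No step here is a real obstacle. The only care needed is the index bookkeeping in $T_{n+1}=2T_n+G_{n-1}$: the new top term is $i=n-1$, with weight $2^{0}$. One also has to note that the induction uses the one-step recurrence $G_{n+1}=3G_n-G_{n-1}$, which holds for all $n\geq 1$.

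A cleaner alternative would be to subtract twice the identity at $n-1$ from the identity at $n$. This telescopes the sum and reduces everything to $G_n-3G_{n-1}+2G_{n-2}=G_{n-1}-\dots$, i.e. again to the Fibonacci recurrence. The direct induction above avoids this extra manipulation.
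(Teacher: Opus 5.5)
Your proof is correct and follows essentially the same route as the paper: induction from the base case $n=3$, combining $G_{n+1}=3G_n-G_{n-1}$ with the inductive hypothesis at the previous index, and your bookkeeping $T_{n+1}=2T_n+G_{n-1}$ makes the cancellation more transparent than the paper's explicit difference computation. One small slip in your optional aside: the telescoped relation should read $G_n-3G_{n-1}+2G_{n-2}=G_{n-2}$, though this does not affect the main argument.
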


\begin{proof}
We proceed by induction on $n$. The basis $n=3$ 
is easy to check,  indeed, we have
$G_3=13=8-2+5+2$. To prove the induction step, for $n>3$,  we use
$G_n=3G_{n-1}-G_{n-2}$ and the inductive assumption to 
write $G_n$ as
\begin{displaymath}
3\cdot\left(2^{n-1}-2^{n-3}+G_{n-2}+
\sum_{i=1}^{n-3}G_{i}\cdot 2^{n-3-i}\right)- G_{n-2}.
\end{displaymath}
The difference between this expression and our wannabe
formula for $G_n$ equals
\begin{displaymath}
2^{n-1}-2^{n-3}-G_{n-1}+2G_{n-2}+
\sum_{i=1}^{n-3}G_{i}\cdot 2^{n-3-i}-G_{n-2} .
\end{displaymath}
This, however, is equal to $0$ by the inductive assumption.
\end{proof}

\subsection{Vertices of $\Sigma$}\label{s4.4}

For a non-zero $0$-$1$-sequence $z$, we denote by $\mathbf{l}(z)$
the position of the leftmost $1$ in $z$.
For example, we have $\mathbf{l}(00{\color{violet}1}010110)=3$
because of the {\color{violet}violet} letter {\color{violet}$1$} in the sequence. 

\begin{proposition}\label{prop31}
Let $n\geq 0$.
\begin{enumerate}[$($a$)$]
\item\label{prop31.1} The level $\Sigma_n$ contains exactly 
$|\Sigma_{n-1}|$ vertices for which the label of the outgoing arrows
is the zero sequence.
\item\label{prop31.2} If $z$ is a non-zero $0$-$1$ 
sequence of length $n+1$ such that $\mathbf{l}(z)=1$, 
then the level $\Sigma_n$ contains a unique
vertex for which $z$ is the label of the outgoing arrows.
\item\label{prop31.3} If $z$ is a non-zero $0$-$1$ 
sequence of length $n+1$ such that $\mathbf{l}(z)>1$, 
then the level $\Sigma_n$ contains 
exactly $|\Sigma_{\mathbf{l}(z)-3}|$ 
vertices for which $z$ is the label of the outgoing arrows.
\end{enumerate}
\end{proposition}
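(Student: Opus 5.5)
The plan is to work throughout with the description of vertices of $\Sigma_n$ given by Lemma~\ref{lem2} and Theorem~\ref{thm1}. By Corollary~\ref{cor8}, a vertex of $\Sigma_n$ whose outgoing arrows are labeled by $z$ is exactly a set
\begin{displaymath}
M(N,z):=\{\mathbf{b}\,:\,\mathbf{b}\text{ satisfies \eqref{condA}, \eqref{condB}, \eqref{condC} with respect to } z \text{ and } N\},
\end{displaymath}
where $N\in\Sigma_{n-1}$ and $z\in N$. So in each of the three cases I need to count the distinct sets $M(N,z)$ as $N$ runs over those vertices of $\Sigma_{n-1}$ that contain $z$. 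To recover information about $N$ from $M(N,z)$, I would look at the length $n+1$ prefixes of the strings in $M(N,z)$ that end in $0$.

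\emph{Case \eqref{prop31.1}, $z=0$.} Conditions \eqref{condA} and \eqref{condB} are vacuous, so $M(N,0)=\{p0,p1\,:\,p\in N\}$. This set determines $N$ by truncation. Moreover, every $N$ contains the zero string, because the empty row can always be added to an element of $\mathcal{R}(n-1)$. Hence $N\mapsto M(N,0)$ is a bijection from $\Sigma_{n-1}$ onto the vertices with label $0$.

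\emph{Case \eqref{prop31.2}, $z_1=1$.} Write $z=a_1\dots a_{n+1}$ and let $\mathbf{b}\in M(N,z)$. Condition \eqref{condB} says that whenever $b_s=1$ with $s\le n+1$, we have $a_s=1$ and $b_t=1$ for every $t<s$ with $a_t=1$. Since $a_1=1$, this forces the support of $b_1\dots b_{n+1}$ to be an initial segment of $\mathrm{supp}(z)$, read in increasing order. I would call such a string a prefix of $z$; the zero string is the empty prefix. The key claim is that every prefix of $z$ lies in every $N\ni z$. To prove it, realize $N=\mathbf{S}(X)$ for some $X\in\mathcal{R}(n-1)$ and check Condition~$(\star)$ for the configuration in which the new bottom row is the truncation $A'$ of $A=\mathrm{supp}(z)$.
\begin{itemize}
\item Pairs of points inside $A'$ are harmless, since $A'$ contains the origin.
\item For a pair consisting of $(x,0)$ with $x\in A'$ and a point $(c,d)\in X$, Condition~$(\star)$ asks for two things. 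The point $(\min(x,c),0)$ must lie in the bottom row; it lies in $A$, and since it is at most $x$ and $A'$ is an initial segment, it also lies in $A'$. The point $(\max(x,c),d)$ must lie in $X$; this is exactly the same requirement as for $A$, which already holds.
\end{itemize}
Hence \eqref{condC} is automatic, and $M(N,z)$ depends only on $z$. Existence follows from taking for $N$ the state of the empty configuration, which contains every string.

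\emph{Case \eqref{prop31.3}, $\ell:=\mathbf{l}(z)>1$.} The same use of \eqref{condB} shows that a string $\mathbf{b}\in M(N,z)$ falls into one of two kinds.
\begin{itemize}
\item Either $b_\ell=1$ and the support of $b_1\dots b_{n+1}$ is $\{\text{positions }<\ell\}\cap\mathrm{supp}(\mathbf{b})$ together with a nonempty prefix of $z$. Here, as in case \eqref{prop31.2}, membership in $N$ should be automatic once the part in positions $<\ell$ is compatible.
\item Or $\mathbf{b}$ is supported in the positions $<\ell$ and in position $n+2$.
\end{itemize}
In both kinds the only dependence on $N$ is through the positions $1,\dots,\ell-1$. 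Geometrically this is the part of $\mathcal{Q}(n-1)$ lying strictly left of column $\ell-1$. As in the proof of Proposition~\ref{prop:rec1}, the presence of $(\ell-1,0)$ together with Condition~$(\star)$ should force all points of $X$ in the columns $0,\dots,\ell-2$ that interact with the bottom row to lie in the corner triangle $\mathcal{Q}(\ell-3)$. So the relevant part of $N$ is the state of an element of $\mathcal{R}(\ell-3)$, i.e.\ an element of $\Sigma_{\ell-3}$, padded by zeros. I would then prove two things:
\begin{enumerate}
\item every element of $\Sigma_{\ell-3}$ arises this way, by choosing $X$ with that corner and otherwise generic, for instance containing $z$-compatible columns;
\item distinct corner states give distinct $M(N,z)$, by reading off the strings of $M(N,z)$ supported in positions $<\ell$.
\end{enumerate}
This gives the count $|\Sigma_{\ell-3}|$, where $|\Sigma_{-1}|=1$ covers $\ell=2$.

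The main obstacle is this last case: making precise which part of $N$ survives in $M(N,z)$, showing that it is exactly an element of $\Sigma_{\ell-3}$, and establishing both surjectivity and injectivity of that correspondence. The prefix-closure argument from case \eqref{prop31.2} is the tool I would reuse there.
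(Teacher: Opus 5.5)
\textbf{Cases (a) and (b).} Your treatment of (a) is the paper's. In (b) you follow the paper's plan: Conditions (A) and (B) force $b_1\dots b_{n+1}$ to be a prefix of $z$, and every prefix of $z$ lies in $N$. You prove that prefix-closure differently, though. The paper proves Lemma~\ref{lemDW} by induction on the level, through the recursive construction of $\Sigma$. You instead write $N=\mathbf{S}(X)$ and check Condition~$(\star)$ for $(X,A')$ directly. Your route is shorter, and it extends to what (c) needs. One small correction: pairs of points in the bottom row are harmless because two points of one row are their own coordinatewise minimum and maximum. The reason is not that $A'$ contains the origin, and indeed $A'$ may be empty.

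\textbf{Case (c): what is missing.} Here you have the right decomposition, the same as the paper's. However, the two claims that produce the count are left as ``should'': that membership in $N$ is automatic for positions $\ge\ell$, and that the part in positions $<\ell$ is exactly an element of $\Sigma_{\ell-3}$. As written, (c) is a plan rather than a proof. Both claims follow from your own tools, as follows.

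\emph{Factorization.} Put $N_L:=\{u:\ u0^{n+2-\ell}\in N\}$, where $u$ has length $\ell-1$. If $\mathbf{b}\in M(N,z)$, then $\mathrm{supp}(b_1\dots b_{\ell-1})$ is an initial segment of $\mathrm{supp}(b_1\dots b_{n+1})$, so $b_1\dots b_{\ell-1}\in N_L$ by your prefix lemma. Conversely, let $u\in N_L$ and let $P$ be a prefix of $z$. Your check from (b), applied to the row $\mathrm{supp}(u)\cup\mathrm{supp}(P)$, goes through:
\begin{itemize}
\item for $x\in\mathrm{supp}(u)$, the requirements are those already met by $u0^{n+2-\ell}$;
\item for $x\in\mathrm{supp}(P)$, they are those met by $z$, and the required minimum lies in $\mathrm{supp}(z)$ and does not exceed $x$, hence lies in $\mathrm{supp}(P)$.
\end{itemize}
Since (A) and (B) only constrain positions $\ge\ell$, $M(N,z)$ consists of the concatenations $u\mathbf{t}$ with $u\in N_L$ and $\mathbf{t}$ in a set that depends only on $z$ and contains the zero string. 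So $M(N,z)$ and $N_L$ determine each other.

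\emph{Identifying $N_L$.} Take $N=\mathbf{S}(X)$ with $(X,z)\in\mathcal{R}(n)$. Applying Condition~$(\star)$ to $(\ell-1,0)$ empties the columns $<\ell-1$ in rows $1,\dots,n+2-\ell$ (rows of $\mathcal{Q}(n)$, with $z$ as row $0$). Hence the points of $X$ in these columns form a copy of some $W\in\mathcal{R}(\ell-3)$ in rows $\ge n+3-\ell$. Points $(c,d)\in X$ with $c\ge\ell-1$ impose nothing on $(x,0)$ for $x\le\ell-2$, because the required points are $(x,0)$ and $(c,d)$ themselves. Lowering $d$ by $n+2-\ell$ turns $\max(x,c)+d\le n+1$ and $\max(x,c)+d\le n$ into the triggering and membership conditions that define $\mathbf{S}(W)$. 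Therefore $N_L=\mathbf{S}(W)$.

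\emph{Surjectivity.} Conversely, place any $W\in\mathcal{R}(\ell-3)$ in that corner and leave $X$ otherwise empty. Then $(X,z)\in\mathcal{R}(n)$, because $W$ and the bottom row never trigger $(\star)$: there $\max(x,c)+d\ge n+2$. Lemma~\ref{lem2} then gives exactly $|\Sigma_{\ell-3}|$ vertices, including the case $\ell=2$.

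\textbf{Comparison with the paper.} With these steps filled in, your (c) is the paper's decomposition made explicit. The paper argues from a picture. It invokes (a) for the North-West triangle together with the zero row beneath it, invokes (b) for the South-East triangle, and then appeals to the multiplication rule. Your $N_L$ factorization is exactly the verification, left implicit in the paper, that the state splits in this way.
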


\begin{proof}
We start with Claim~\eqref{prop31.1}. From the perspective of 
the level $\Sigma_{n-1}$, each vertex at that level contains
the zero sequence $\mathbf{0}_{n+1}$ with $n+1$ letters.
Indeed, for any element $(Y_{n-1},Y_{n-2},\dots, Y_0)\in \mathcal{R}(n-1)$, 
we obviously have that the element 
$(Y_{n-1},Y_{n-2},\dots, Y_0,\mathbf{0}_{n+1})$ belongs to $\mathcal{R}(n)$.
Furthermore, for any $0$-$1$ sequence $u$ at our vertex,
the compatibility Conditions~\eqref{condA} and \eqref{condB}
between $\mathbf{0}_{n+1}$ and both $u0$ and $u1$ are obviously satisfied.
Therefore, starting from $Q\in\Sigma_{n-1}$ and 
going against the arrow labeled by $\mathbf{0}_{n+1}$,
we arrive at the vertex in $\Sigma_n$ which consists exactly of
all $0$-$1$ sequences $u0$ and $u1$, where $u$ is a $0$-$1$-sequence in $Q$.
Consequently, starting from two different vertices $Q$ and $Q'$ in 
$\Sigma_{n-1}$, going against the arrows labeled by 
$\mathbf{0}_{n+1}$ will bring us to two different vertices 
in $\Sigma_{n-1}$.  This implies Claim~\eqref{prop31.1}.

To prove Claim~\eqref{prop31.2}, let $z$ be a non-zero $0$-$1$ 
sequence of length $n+1$ whose leftmost letter is $1$. Write 
$\mathrm{supp}(z)=\{i_1=1,i_2,\dots,i_p\}$ such that 
$i_1<i_2<\dots <i_p$. We claim that the starting vertex $Q$
of $z$ consists of all $0$-$1$ sequences $u$ whose support
is an initial segment of $\{i_1,i_2,\dots,i_p,n+2\}$.
If we can prove that, we get Claim~\eqref{prop31.2} as we
manifestly show that $Q$ is uniquely determined by $z$.

Our first step is to show that the support of any $u\in Q$
is an initial segment of $\{i_1,i_2,\dots,i_p,n+2\}$. This 
follows from the compatibility Conditions~\eqref{condA}
and \eqref{condB} applied to $z$ and $u$. Indeed, if $u$
ends with $1$, then Condition~\eqref{condA} says exactly that 
$\mathrm{supp}(z)\subset\mathrm{supp}(u)$. If 
$j\in \mathrm{supp}(u)\setminus (\mathrm{supp}(z)\cup\{n+2\})$, 
then we can apply Condition~\eqref{condB} to position $1$ in $z$, 
where we have $1$, and position $j\in u$, where we also have $1$.
Since $j\neq n+2$ by our assumptions, we must have $1$ at  position
$j$ in $z$, a contradiction. Therefore, in this case, 
$\mathrm{supp}(u)$ is the union of $\mathrm{supp}(z)$
and $\{n+2\}$. 

If $u$ ends with $0$, take any $j\in \mathrm{supp}(u)$.
Since $1\in \mathrm{supp}(z)$,
from Condition~\eqref{condB} we have $j\in \mathrm{supp}(z)$,
so $\mathrm{supp}(u)\subset\mathrm{supp}(z)$. Let
$j'\in \mathrm{supp}(z)$ be such that $j'<j$. Then we 
apply Condition~\eqref{condB} again and get 
$j'\in \mathrm{supp}(u)$. This completes the proof of the claim
that the support of any $u\in Q$
is an initial segment of $\{i_1,i_2,\dots,i_p,n+2\}$.

Now let us show that every $0$-$1$ string of length $n+2$
whose support is an initial segment of $\{i_1,i_2,\dots,i_p,n+2\}$
belongs to $Q$. So, let $u$ be a $0$-$1$ string of length $n+2$
whose support is an initial segment of $\{i_1,i_2,\dots,i_p,n+2\}$. 
Such $u$ obviously satisfies  the compatibility 
Conditions~\eqref{condA} and \eqref{condB}. Deleting the last 
letter from $u$, we get a $0$-$1$ string, which we call $w$, 
of length $n+1$, whose support is an initial segment of $\mathrm{supp}(z)$.
We need to prove that $w$ belongs to the target vertex $Q'$ of our arrow
labeled by $z$. Note that, by definition, $z$ belongs to $Q'$.
Therefore, what we really need is the following lemma
(this lemma was suggested by Microsoft Copilot).

\begin{lemma}\label{lemDW}
Let $R$ be any vertex in $\Sigma$ and $x\in R$. 
Let $y$ be a $0$-$1$-string of the same length as $x$
such that $\mathrm{supp}(y)$ is an initial segment of 
$\mathrm{supp}(x)$. Then $y\in R$.
\end{lemma}

\begin{proof}
We proceed by induction on the level, with the basis of induction being
trivial. To prove the induction step, take any vertex $R'$ of $\Sigma$
and any $x'\in R'$. Let $y'$ be a $0$-$1$-string of the same length as $x'$
satisfying that $\mathrm{supp}(y')$ is an initial segment of 
$\mathrm{supp}(x')$. We assume that $y'$ is different from $x'$.

By our recursive definition of $\Sigma$, the vertex $R'$
was constructed from some vertex $R''$ of a smaller level and
some element $q\in R''$. Our element $x'$ has the form
$q'0$ or $q'1$, for some $q'\in R''$ such that 
Conditions~\eqref{condA} and \eqref{condB} are satisfied.

Deleting the last letter from $y'$, we get an element which 
we denote by $y''$. It is a $0$-$1$-string of the same length as $q'$
satisfying that $\mathrm{supp}(y'')$ is an initial segment of 
$\mathrm{supp}(q')$. By induction, $y''\in R''$.

If the last letter of $y'$ were $1$, the initial segment condition
would force $y'=x'$ and we excluded this case. Therefore
the last letter of $y'$ is $0$. Adding this $0$ to $y''$ 
on the right, on the one hand, outputs $y'$, on the other hand,
satisfies Conditions~\eqref{condA} and \eqref{condB} since 
$\mathrm{supp}(y'')$ is an initial segment of 
$\mathrm{supp}(q')$ and $q'$
satisfies Conditions~\eqref{condA} and \eqref{condB}.
Therefore $y'\in R'$.
The proof of our induction step is now complete.
\end{proof}

Application of Lemma~\ref{lemDW} finishes the proof of Claim~\eqref{prop31.2}.

Claim~\eqref{prop31.3} reduces to a combination of 
Claim~\eqref{prop31.1} and Claim~\eqref{prop31.2}.
Indeed, let $z$ be a non-zero $0$-$1$ sequence.
We assume that $z$ is the label of an arrow from some 
vertex $Q$ to some vertex $R$ in $\Sigma$. 
Let us look at the following pictorial representation of the 
elements in $\mathcal{R}(n)$ which can be obtained as paths starting
from the vertex $Q$:
\begin{displaymath}
\xymatrix@R=0.5mm@C=0.5mm{
{\color{teal}\ast}&&&&&&&\\
{\color{teal}\ast}&{\color{teal}\ast}&&&&&&\\
{\color{teal}\ast}&{\color{teal}\ast}&{\color{teal}\ast}&&&&&\\
0&0&0&0&&&&\\
0&0&0&0&{\color{violet}\diamond}&&&\\
0&0&0&0&{\color{violet}\diamond}&{\color{violet}\diamond}&&\\
0&0&0&0&{\color{violet}\diamond}&{\color{violet}\diamond}&{\color{violet}\diamond}\\
0&0&0&0&{\color{violet}1}&{\color{violet}\diamond}&
{\color{violet}\diamond}&{\color{violet}\diamond}
}
\end{displaymath}
Here we see our $z$ as the bottom row with the leftmost $1$
at position $\mathbf{l}(z)$. Because of this $1$, Condition~$(\star)$
forces the South-West rectangle filled with zeros. This splits the whole
picture into two parts: the {\color{teal}teal} North-West triangle and
the {\color{violet}violet} South-East triangle. We see that there is 
the zero sequence directly 
under the North-West triangle. This is exactly the situation
treated in Claim~\eqref{prop31.1}. It produces, for the North-West triangle, 
the  $|\Sigma_{\mathbf{l}(z)-3}|$ factor by directly applying
Claim~\eqref{prop31.1}.

We also see the $1$ in the South-West corner of the 
South-East triangle. This is exactly the situation
treated in Claim~\eqref{prop31.2}. It produces, for the
South-East triangle,  the 
factor $1$ by directly applying Claim~\eqref{prop31.2}.
Claim~\eqref{prop31.3} now follows by applying the
multiplication rule.
\end{proof}

\subsection{Proof of Copilot's conjecture}\label{s4.9}

\begin{theorem}\label{thm3}
For $n\geq -1$,  we have $|\Sigma_n|=G_{n+1}$.  
\end{theorem}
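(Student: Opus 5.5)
We argue by induction on $n$, combining Corollary~\ref{cor8} with Proposition~\ref{prop31} and Proposition~\ref{prop21}. The bases $n=-1,0,1$ are read off from Figure~\ref{fig1}: $|\Sigma_{-1}|=1=G_0$, $|\Sigma_0|=2=G_1$, $|\Sigma_1|=5=G_2$. For the induction step, take $n\geq 2$ and partition $\Sigma_n$ by the common label of the outgoing arrows, which is well defined by Corollary~\ref{cor8}. Every vertex of $\Sigma_n$ with $n\geq 0$ has at least one outgoing arrow, so each vertex lies in exactly one class. The label is a $0$-$1$ sequence $z$ of length $n+1$, and we count each class with Proposition~\ref{prop31}.

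There are three kinds of class.
\begin{enumerate}[$($i$)$]
\item The zero label contributes $|\Sigma_{n-1}|=G_n$ vertices, by Proposition~\ref{prop31}\eqref{prop31.1}.
\item Labels with $\mathbf{l}(z)=1$ contribute one vertex each, by Proposition~\ref{prop31}\eqref{prop31.2}. There are $2^n$ such $z$.
\item Labels with $\mathbf{l}(z)=\ell$, where $2\leq\ell\leq n+1$, contribute $|\Sigma_{\ell-3}|$ vertices each, by Proposition~\ref{prop31}\eqref{prop31.3}. There are $2^{n+1-\ell}$ such $z$.
\end{enumerate}
The only delicate point is that Proposition~\ref{prop31}\eqref{prop31.3} counts vertices having label $z$, so every $z$ is an actual label. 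I read the statement as allowing a count of $0$, so this needs no separate check. For $\ell=2$ the count is $|\Sigma_{-1}|=1$, which is consistent with the conventions. The three cases give
\begin{displaymath}
|\Sigma_n| = G_n + 2^n + \sum_{\ell=2}^{n+1} 2^{n+1-\ell}\,|\Sigma_{\ell-3}|
= G_n + 2^n + \sum_{j=-1}^{n-2} 2^{n-2-j}\,|\Sigma_j| ,
\end{displaymath}
where we substituted $j=\ell-3$.

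Now apply the induction hypothesis $|\Sigma_j|=G_{j+1}$ for $j\leq n-1$ and set $i=j+1$. This gives
\begin{displaymath}
|\Sigma_n| = G_n + 2^n + \sum_{i=0}^{n-1} 2^{n-1-i}G_i .
\end{displaymath}
We must show this equals $G_{n+1}$. Proposition~\ref{prop21} at index $n+1$ gives
\begin{displaymath}
G_{n+1} = 2^{n+1}-2^{n-1}+G_n+\sum_{i=1}^{n-1}G_i\,2^{n-1-i}.
\end{displaymath}
Subtracting, the two expressions differ by $2^{n+1}-2^{n-1}-2^n-2^{n-1}G_0$. Since $G_0=1$, this is $2^{n+1}-2^n-2^{n-1}-2^{n-1}=0$, which closes the induction. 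The index range is right: Proposition~\ref{prop21} requires index greater than $2$, and $n\geq 2$ gives $n+1\geq 3$.

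The main obstacle is the bookkeeping of the index shifts, and in particular checking that the $\ell=2$ term $|\Sigma_{-1}|$ absorbs exactly the $-2^{n-1}$ correction in Proposition~\ref{prop21}. The combinatorial content is entirely supplied by Proposition~\ref{prop31}.
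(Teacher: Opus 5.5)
Your proof is correct and is essentially the paper's argument. You partition $\Sigma_n$ by the common outgoing label (Corollary~\ref{cor8}), count the classes with Proposition~\ref{prop31} to get $|\Sigma_n|=|\Sigma_{n-1}|+2^n+\sum_{\ell=2}^{n+1}2^{n+1-\ell}|\Sigma_{\ell-3}|$, and compare this with Proposition~\ref{prop21}. You also carry out the reindexing that the paper leaves implicit, correctly checking that the $\ell=2$ term $2^{n-1}|\Sigma_{-1}|=2^{n-1}G_0$, together with $2^n$, reproduces the $2^{n+1}-2^{n-1}$ in Proposition~\ref{prop21}.
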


\begin{proof}
For small values of $n$, this is easy to check by hand.
Due to Corollary~\ref{cor8}, Propositions~\ref{prop31} implies that
$|\Sigma_n|$ satisfies the recurrence
\begin{displaymath}
|\Sigma_{n}|=
|\Sigma_{n-1}|+2^n+\sum_{i=2}^{n+1}|\Sigma_{i-3}|\cdot 2^{n+1-i}. 
\end{displaymath}
Therefore, the claim of our theorem follows by comparing the above recurrence with the
appropriately reindexed recurrence in  Proposition~\ref{prop21}.
\end{proof}

\section{Some growth estimates}\label{s7}

\subsection{Observations from our computations}\label{s7.1}

Based on our computations above (and similar computations
using SageMath for bigger values of $n$), here are the approximate
values for the ratios $\frac{R_{n+1}}{R_{n}}$, for 
$n=0,1,2,\dots,10,20,30,\dots,100$:

\resizebox{\textwidth}{!}{
$
\begin{array}{c||c|c|c|c|c|c|c|c|c|c|c}
n & 0 & 1 & 2 & 3 & 4 & 5 & 6 & 7 & 8 & 9 & 10 \\  \hline\hline
\resizebox{5mm}{!}{$\frac{R_{n+1}}{R_n}$} & 
3.50 & 4.86 & 5.85 & 6.57 & 7.11 & 7.52 
& 7.85 & 8.12 & 8.34 & 8.53 & 8.69 
\end{array}
$
}

\resizebox{\textwidth}{!}{
$
\begin{array}{c||c|c|c|c|c|c|c|c|c}
n & 20 & 30 & 40 & 50 & 60 & 70 & 80 & 90 & 100\\
\hline\hline
\resizebox{5mm}{!}{$\frac{R_{n+1}}{R_n}$}  & 9.53 & 9.87 & 10.05 & 10.17 
& 10.25 & 10.30 & 10.35 & 10.38 & 10.41
\end{array}
$
}

This sequence of ratios does look converging with the limit 
probably somewhere around $10.5$. It looks as an interesting
question to find some reasonable and non-trivial lower and upper
bounds for $R_n$. It seems that both problems are not really easy.

For comparison, the sequence of the approximate values of $\sqrt[n]{R_n}$
is as follows:

\resizebox{\textwidth}{!}{
$
\begin{array}{c||c|c|c|c|c|c|c|c|c|c}
n & 1 & 2 & 3 & 4 & 5 & 6 & 7 & 8 & 9 & 10 \\
\hline\hline
\resizebox{5mm}{!}{$\sqrt[n]{R_n}$} & 7.00 & 5.83 
& 5.84 & 6.01 
& 6.22 & 6.42 & 6.61 & 6.78 & 6.94 & 7.08 
\end{array}
$
}

\resizebox{\textwidth}{!}{
$
\begin{array}{c||c|c|c|c|c|c|c|c|c}
n &  20 & 30 & 40 & 50 & 60 & 70 & 80 & 90 & 100\\
\hline\hline
\resizebox{5mm}{!}{$\sqrt[n]{R_n}$} & 8.04 & 8.56 & 8.89 
& 9.12 & 9.30 & 9.43 & 9.54 & 9.63 & 9.70
\end{array}
$
}

This also looks converging with the limit probably somewhere 
between $10$ and $11$.  Of course, it is reasonable to 
expect that both sequences converge to the same limit.

\subsection{$R_n$ vs $P_n$}\label{s7.2}
 
Let us write $\mathcal{R}(n)$ as the disjoint union
\begin{displaymath}
\mathcal{R}(n)= \mathcal{P}(n)\cup \mathcal{S}(n),
\end{displaymath}
where
\begin{displaymath}
\mathcal{S}(n):=\{X\in \mathcal{R}(n)\,:\, (0,0)\not\in X\}. 
\end{displaymath}
Then $|\mathcal{R}(n)|=|\mathcal{P}(n)|+|\mathcal{S}(n)|$
by the Rule of Sum. Mapping $X\in \mathcal{S}(n)$ to
$X\cup\{(0,0)\}$ defines an injective map from 
$\mathcal{S}(n)$ to $\mathcal{P}(n)$. Therefore we have
\begin{equation}\label{eq:rnpm}
|\mathcal{S}(n)|\leq |\mathcal{P}(n)| 
\end{equation}
and thus
\begin{displaymath}
|\mathcal{P}(n)|\leq R_n\leq 2 |\mathcal{P}(n)|.
\end{displaymath}
Consequently, $R_n$ and $P_n$ have the same rate of growth.

\subsection{Lower bound}\label{s7.3}

In this section we provide a lower bound for the sequence $R_n$,

\begin{remark}
{\em Microsoft Copilot asserts that 
$R_n\geq \frac{1}{111}\cdot 10^n$, using methods similar
to the ones described below, but based on a computation
for $L=160$, the method below uses $L=47$. I was not able to 
verify this $10^n$ lower bound using SageMath
as the corresponding computation simply took too long.
Therefore below I present a weaker $9^n$ lower bound which
I was able to verify.
}
\end{remark}

Our goal is to prove the following:

\begin{theorem}\label{thm:lbound}
For every integer $n\ge 0$, one has
\begin{equation}\label{eq:lower-9}
 R_n\ge \frac1{12}\,9^n.
\end{equation}
\end{theorem}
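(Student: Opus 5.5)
Lower bound plan. Throughout I work with the nonnegative quantities $r(n,k)$ and $p(n,k)$ from Propositions~\ref{prop:rec1} and~\ref{prop:rec2}. The remark preceding the theorem mentions a parameter $L=47$; I take this to be a finite computation range, followed by a bootstrapping step.

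The guiding idea is to keep only part of recurrence~\eqref{prop:rec1.2} and obtain a linear recursive lower bound with constant coefficients. Since every term is nonnegative, \eqref{prop:rec1.2} gives
\begin{displaymath}
R_n\;\geq\;\sum_{k\ge1} r(n,k)\;\geq\;\sum_{j=0}^{n-1}P_j\,R_{n-j-2},
\end{displaymath}
with the convention $R_{-1}=1$. This is just the decomposition by the lowest point $(0,m)$ of the first column, keeping only the cases $m\geq 1$. For $j\le L$ the values $P_j$ are explicit numbers (Figure~\ref{fig22} and the SageMath computation). The tail $j>L$ can simply be dropped, which leaves the truncated inequality
\begin{displaymath}
R_n\ \ge\ \sum_{j=0}^{\min(L,n-1)}P_j\,R_{n-j-2}.
\end{displaymath}
The growth of this inequality is governed by the smallest positive root $x_L$ of $1=\sum_{j=0}^{L}P_jx^{j+2}$. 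Because $P_j$ grows roughly like $10^j$, the sum $\sum_j P_j 9^{-j-2}$ diverges, so for $L$ large enough (presumably $L=47$) we get $\sum_{j\le L}P_j 9^{-j-2}\ge 1$. This crossing threshold is the quantity I would verify numerically.

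With such an $L$ fixed, the proof is by strong induction. First I check $R_n\ge 9^n/12$ directly for $0\le n\le L+1$ from the computed values; for $n=0$ this reads $2\ge 1/12$, and the inequality holds comfortably for small $n$. For the induction step with $n>L+1$, the inductive hypothesis gives
\begin{displaymath}
R_n\ \ge\ \tfrac1{12}9^{n}\sum_{j=0}^{L}P_j9^{-j-2}\ \ge\ \tfrac1{12}9^n.
\end{displaymath}
The constant $1/12$ is chosen so that the base range works, and in particular the smallest ratio $R_n/9^n$ over the checked range must be at least $1/12$. For this I would look at the values $\sqrt[n]{R_n}$, which stay below $9$ until fairly large $n$, so the constant is dictated there.

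I expect the main obstacle to be the numerical side, and two things need care. First, whether $\sum_{j\le L}P_j9^{-j-2}\ge1$ actually holds at a modest $L$. Since $P_j/9^j$ only starts to increase for large $j$, $L$ may need to be around $47$, and exact integer arithmetic via Theorem~A is then required. Second, the base range must reach up to $L+1$. If the plain truncated inequality turns out too weak, I would first try enlarging the retained part, for example also using $r(n,0)=R_{n-1}$ from \eqref{prop:rec1.1}, which gives
\begin{displaymath}
R_n\ge R_{n-1}+\sum_j P_jR_{n-j-2},
\end{displaymath}
since the $r(n,0)$ term is disjoint from the terms with $k\ge1$.
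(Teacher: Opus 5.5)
Your outline matches the paper's: a truncated constant-coefficient lower recurrence, $9^n$ as a subsolution, and strong induction over a computer-checked base range. Your starting inequality $R_n\ge\sum_{j=0}^{n-1}P_jR_{n-j-2}$ is correct. The difference is which part of
\[
R_n=R_{n-1}+P_n+\sum_{j=0}^{n-1}P_jR_{n-j-2}
\]
you keep. You discard $R_{n-1}+P_n$. The paper instead discards the convolution and then sums Proposition~\ref{prop:rec2} over $k$ (Lemma~\ref{lem:P-exact}) to get $P_n=R_{n-1}+\sum_t W_tR_{n-t-2}$ with $W_t=\sum_p(p+1)b(t,p)$. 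The weight $p+1$ arises because the inner sum over $q$ in \eqref{recurrence:b} makes each $b(t,p)$ count for $p+1$ values of $k$. Combining the two gives the exact identity $R_n=2R_{n-1}+\sum_t(W_t+P_t)R_{n-t-2}$. The paper drops only the $P_t$ part, while you drop the $W_t$ part and the $2R_{n-1}$.

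This is a large loss, because $W_t/P_t$ rises from $2$ at $t=0$ to about $5.2$ at $t=20$, and by extrapolation to roughly $6$ near $t=47$. At $L=47$ the paper's quantity $\frac{2}{9}+\sum_{t\le 47}W_t9^{-t-2}$ is about $1.04$; this is the content of \eqref{eq:F47-negative}. Your $\sum_{j\le 47}P_j9^{-j-2}$ is only about $0.16$, or about $0.27$ with your $R_{n-1}$ fallback. So your guess $L=47$ fails. Extrapolating the tabulated ratios $R_{n+1}/R_n$, your sum first exceeds $1$ only around $L\approx 65$, and your base range must then run up to $n\approx 66$.

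That is still a finite computation, and Theorem~A makes it cheap, since values up to $n=100$ are available. The base cases also hold throughout that range. The tightest one is $n=13$, where $12R_{13}/9^{13}\approx 1.03$, not ``fairly large $n$''. So your route goes through once the computation at the larger $L$ is actually done. It avoids Lemma~\ref{lem:P-exact} at the price of a larger truncation and base range.

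Do not use the heuristic that $P_j$ ``grows like $10^j$'' to justify that the series diverges. That growth is unproven, and it is stronger than the bound you are proving. Only the explicit finite inequality at your chosen $L$ belongs in the proof.
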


The proof is, essentially, due to Microsoft Copilot.
The computer assisted part is checked independently using
SageMath.

For $t\ge 0$, define $\displaystyle W_t:=\sum_{p\ge 1} (p+1)b(t,p)$.
Since $b(t,p)=0$, for $p>t+1$, the sum is finite.

\begin{lemma}\label{lem:P-exact}
For every $n\ge 1$, we have
\begin{equation}\label{eq:P-exact}
 P_n = R_{n-1} + \sum_{t=0}^{n-1} W_t\,R_{n-t-2}.
\end{equation}
\end{lemma}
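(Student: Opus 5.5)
The plan is to sum the recurrence of Proposition~\ref{prop:rec2} over $k$ and then exchange the order of summation. By Theorem~A and its proof we have $r=a$ and $p=b$, so $P_n=\sum_{k=1}^{n+1}p(n,k)$ and $R_j=\sum_i r(j,i)$. With the conventions $r(-1,0)=r(-2,0)=1$ this also holds for $j=-1,-2$, giving $R_{-1}=R_{-2}=1$. Moreover $W_t=\sum_{q}(q+1)p(t,q)$, where the term $q=0$ contributes nothing because $p(t,0)=0$.

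Summing \eqref{prop:rec2.1} over $1\le k\le n+1$, the first term gives
\begin{displaymath}
\sum_{k=1}^{n+1} r(n-1,k-1)=R_{n-1}.
\end{displaymath}
For the triple sum, fix $m$ and rewrite the inner double sum over $(k,r,q)$ as a sum over pairs $(r,q)$. The constraints are $1\le k\le n+1$, $0\le r\le \min(m-1,k-1)$ and $k-1-r\le q\le n-m+1$. For fixed $r\in\{0,\dots,m-1\}$ and $q\in\{0,\dots,n-m+1\}$, these amount to
\begin{displaymath}
r+1\le k\le \min(n+1,\,q+r+1).
\end{displaymath}
The one point to check is that the cap $n+1$ never binds. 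This holds because $q+r+1\le (n-m+1)+(m-1)+1=n+1$. Hence exactly $q+1$ values of $k$ occur for each pair $(r,q)$.

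The contribution of a fixed $m$ therefore factors as
\begin{displaymath}
\Big(\sum_{r=0}^{m-1} r(m-2,r)\Big)\Big(\sum_{q=0}^{n-m+1}(q+1)\,p(n-m,q)\Big)=R_{m-2}\,W_{n-m}.
\end{displaymath}
Here we use that $r(m-2,\cdot)$ is supported on $\{0,\dots,m-1\}$; for $m=1$ this is $r(-1,\cdot)$, supported at $0$. We also use that $p(n-m,\cdot)$ is supported on $\{1,\dots,n-m+1\}$. Substituting $t=n-m$, so that $t$ runs from $0$ to $n-1$, yields \eqref{eq:P-exact}.

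The only delicate step is the bookkeeping of the summation ranges: verifying that the bound $k\le n+1$ is automatic and that the boundary cases $m=1$ and $m=2$ are correctly covered by the conventions for $r(-1,\cdot)$ and $r(0,\cdot)$. No other obstacle is expected.
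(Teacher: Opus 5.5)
Your proposal is correct and is essentially the paper's argument: sum the recurrence for $p(n,k)=b(n,k)$ over $k$, observe that each pair $(r,q)$ is counted for exactly $q+1$ values of $k$, and factor the $m$-th term as $R_{m-2}W_{n-m}$. Your explicit check that $q+r+1\le n+1$, which shows the cap $k\le n+1$ never removes any of these values, is a detail the paper leaves implicit.
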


\begin{proof}
Sum \eqref{recurrence:b} over all $k\ge 1$.
The first term contributes
\[
 \sum_{k\ge 1} a(n-1,k-1)=\sum_{r\ge 0} a(n-1,r)=R_{n-1}.
\]
Now fix $m,r,p$ in the double sum.  The integer $k$ contributes precisely when
\[
 r\le k-1\le r+p,
\]
that is, for exactly the $p+1$ values
\[
 k=r+1,r+2,\dots,r+p+1.
\]
Therefore each fixed pair $(r,p)$ contributes a multiplicity factor $p+1$ after summation over $k$.  Summing first over $r$ then gives $R_{m-2}$, and writing $t=n-m$ yields
\[
 P_n = R_{n-1} + \sum_{m=1}^n R_{m-2}\sum_{p\ge 1}(p+1)b(n-m,p)
 =R_{n-1}+\sum_{t=0}^{n-1} W_t\,R_{n-t-2},
\]
as claimed.
\end{proof}

\begin{corollary}\label{cor:R-lower-rec}
For every $n\ge 1$, one has
\begin{equation}\label{eq:R-lower-full}
 R_n \ge 2R_{n-1} + \sum_{t=0}^{n-1} W_t\,R_{n-t-2}.
\end{equation}
In particular, for every fixed integer $L\ge 0$ and every $n\ge L+2$, we have
\begin{equation}\label{eq:R-lower-trunc}
 R_n \ge 2R_{n-1} + \sum_{t=0}^L W_t\,R_{n-t-2}.
\end{equation}
\end{corollary}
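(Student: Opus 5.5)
We need R_n ≥ 2R_{n-1} + Σ W_t R_{n-t-2}. We know R_n = P_n + |S(n)| and, by Lemma~\ref{lem:P-exact}, P_n = R_{n-1} + Σ W_t R_{n-t-2}. So it suffices to show |S(n)| ≥ R_{n-1}.

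The plan is to combine Lemma~\ref{lem:P-exact} with the decomposition $\mathcal{R}(n)=\mathcal{P}(n)\sqcup\mathcal{S}(n)$ from Subsection~\ref{s7.2}. Since $R_n=P_n+|\mathcal{S}(n)|$ and Lemma~\ref{lem:P-exact} gives $P_n=R_{n-1}+\sum_{t=0}^{n-1}W_tR_{n-t-2}$, inequality \eqref{eq:R-lower-full} reduces to the single estimate $|\mathcal{S}(n)|\geq R_{n-1}$, for $n\geq 1$.

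For this estimate, I would use the set $\mathcal{R}(n)_0$ from the proof of Proposition~\ref{prop:rec1}. These are the elements of $\mathcal{R}(n)$ with empty first column. Each such element avoids $(0,0)$, so $\mathcal{R}(n)_0\subset\mathcal{S}(n)$. The proof of Proposition~\ref{prop:rec1} gives a bijection $\mathcal{R}(n)_0\cong\mathcal{R}(n-1)$ by deleting the empty column and shifting left. Hence $|\mathcal{S}(n)|\geq r(n,0)=R_{n-1}$.

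An alternative is to go through the state graph. The zero bottom row is always admissible (see the proof of Proposition~\ref{prop31}\eqref{prop31.1}), so $Y\mapsto(Y,\varnothing)$ injects $\mathcal{R}(n-1)$ into $\mathcal{S}(n)$. Either argument suffices, and I would use the first since it is already proved in the paper. There is no real obstacle. The only point to double-check is that the reduction matches the indexing of Lemma~\ref{lem:P-exact}, with the convention $R_{-1}=1$ coming from $r(-1,0)=1$ so that the $t=n-1$ term is included.

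For the truncated version \eqref{eq:R-lower-trunc}, all terms $W_tR_{n-t-2}$ are non-negative, because $b$ (equal to $p$ by Theorem~A) and $R$ count sets. Dropping the terms with $t>L$ therefore only decreases the right-hand side. The condition $n\geq L+2$ guarantees $L\leq n-1$ (indeed $L\leq n-2$), so the truncated sum is a sub-sum of the full one with all indices $n-t-2\geq 0$.
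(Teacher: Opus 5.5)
Your proof is correct and is essentially the paper's argument: both reduce the claim to $R_n\ge R_{n-1}+P_n$, substitute Lemma~\ref{lem:P-exact}, and truncate using non-negativity. The only difference is in how $R_n\ge R_{n-1}+P_n$ is obtained. The paper sums \eqref{recurrence:a2} over $k\ge 1$ to get $R_n=R_{n-1}+P_n+\sum_{j=0}^{n-1}P_jR_{n-j-2}$ and discards the last sum, whereas you read the same decomposition combinatorially via $\mathcal{R}(n)_0\subseteq\mathcal{S}(n)$; the discarded sum counts exactly $\mathcal{S}(n)\setminus\mathcal{R}(n)_0$.
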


\begin{proof}
Summing \eqref{recurrence:a2} 
over all $k\ge 1$ and adding the special term $a(n,0)=R_{n-1}$ gives
\[
 R_n = R_{n-1} + P_n + \sum_{j=0}^{n-1} P_j R_{n-j-2}.
\]
All terms are nonnegative, hence $R_n \ge R_{n-1} + P_n$.
Replace $P_n$ using \eqref{eq:P-exact} to obtain \eqref{eq:R-lower-full}.  
The truncated inequality \eqref{eq:R-lower-trunc} follows by 
discarding the nonnegative terms with $t>L$.
\end{proof}

Let us now set $L=47$. Using the defining recurrence for
$b(n,k)$, one computes the exact integers $W_t$ for $0\le t\le 47$:
\begin{gather*}
W_{0}=2,\; W_{1}=10,\; W_{2}=61,\; W_{3}=417, \;
W_{4}=3069,\; W_{5}=23792,\; W_{6}=191729,\\ W_{7}=1592215, \;
W_{8}=13544540,\; W_{9}=117514054,\; W_{10}=1036498196,\\ 
W_{11}=9270803468, \; W_{12}=83923822932,\; W_{13}=767698692000,\\ 
W_{14}=7087279559385,\; W_{15}=65962045933923, \;
W_{16}=618375778857162,\\ 
W_{17}=5834883859214502,\; W_{18}=55380727815966261,\; 
W_{19}=528440108649254193, 
\end{gather*}
\begin{gather*}
W_{20}=5066863647277382769,\; 
W_{21}=48798993733336365360,\\ W_{22}=471904895014440160956,\; 
W_{23}=4580704144611974549252, \\
W_{24}=44619216594372104854304,\; 
W_{25}=436028318197971420102568,\\ 
W_{26}=4273788952292896870660328,\; 
W_{27}=42007759647196604443899208, \\
W_{28}=413985448734460309113392408,\; 
W_{29}=4089858352463024931919779008,\\ 
W_{30}=40498057056226978339012990953,\; 
W_{31}=401886777330433575319948408443, 
\end{gather*}
\begin{gather*}
W_{32}=3996343833415466695413701072038,\\ 
W_{33}=39816410241921345855018229262518,\\ 
W_{34}=397425795703252401024720411455543,\\ 
W_{35}=3973771342288571047788883369035379, \\
W_{36}=39798250184133255692570181051276767,\\
W_{37}=399212091112896726572027912994238096,\\ 
W_{38}=4010411552498593848908082035772086089, 
\end{gather*}
\begin{gather*}
W_{39}=40344938685751664890290577708518542799, \\
W_{40}=406420400488580295940123687670896750452,\\ 
W_{41}=4099413005919968859471552690203422611126,\\ 
W_{42}=41400232013530068435300464857128490968696,\\ 
W_{43}=418596518041416171957321505623350092945200, \\
W_{44}=4237195258436912452721733225720491584886352,\\ 
W_{45}=42936933575880059491791692606286077976763008,\\ 
W_{46}=435546123810287228552654386737526592379323484,\\ 
W_{47}=4422521027852755507048330045052028456829333620.
\end{gather*}

Associated with the truncated linear recurrence 
\eqref{eq:R-lower-trunc} is the polynomial
\begin{equation}\label{eq:F47-def}
 F_{47}(x):=x^{49}-2x^{48}-\sum_{t=0}^{47} W_t x^{47-t}.
\end{equation}
Evaluating at $x=9$ gives the exact integer
\begin{equation}\label{eq:F47-negative}
 F_{47}(9)=-2179856339179759119326891523960197013600439716<0.
\end{equation}
Consequently,
\begin{equation}\label{eq:9-subsolution}
 9^{49}\le 2\cdot 9^{48}+\sum_{t=0}^{47} W_t 9^{47-t}.
\end{equation}
Multiplying \eqref{eq:9-subsolution} by $9^{n-49}$ gives, for every $n\ge 49$,
\begin{equation}\label{eq:9-subsolution-general}
 9^n \le 2\cdot 9^{n-1}+\sum_{t=0}^{47} W_t 9^{n-t-2}.
\end{equation}
Thus the sequence $9^n$ is a subsolution for the linear recurrence \eqref{eq:R-lower-trunc}.

Exact computation gives
\[
 12R_n-9^n\ge 0 \qquad (0\le n\le 48).
\]
Moreover, the minimum value in this range is attained at $n=0$ and equals
\begin{equation}\label{eq:min-initial}
 \min_{0\le n\le 48} \bigl(12R_n-9^n\bigr) = 23 > 0.
\end{equation}
Hence
\begin{equation}\label{eq:initial-bound}
 R_n\ge \frac1{12}9^n \qquad (0\le n\le 48).
\end{equation}

We are now ready to prove Theorem~\ref{thm:lbound}.

\begin{proof}[Proof of Theorem~\ref{thm:lbound}.]
We prove \eqref{eq:lower-9} by induction on $n$.  The initial range $0\le n\le 48$ is exactly \eqref{eq:initial-bound}.
Now let $n\ge 49$ and assume inductively that
\[
 R_m\ge \frac1{12}9^m \qquad (0\le m<n).
\]
Then, by the truncated lower recurrence \eqref{eq:R-lower-trunc} with $L=47$,
\[
 R_n \ge 2R_{n-1}+\sum_{t=0}^{47} W_t R_{n-t-2}.
\]
Applying the induction hypothesis to each term on the right gives
\[
 R_n \ge \frac1{12}\left(2\cdot 9^{n-1}+\sum_{t=0}^{47} W_t 9^{n-t-2}\right).
\]
Finally, we use \eqref{eq:9-subsolution-general} to conclude that
$R_n\ge \frac1{12}9^n$.
This completes the induction and proves \eqref{eq:lower-9}.  
\end{proof}

\begin{remark}
{\em The above  proof of Theorem~\ref{thm:lbound} 
is computer-assisted in only two places:
\begin{itemize}
\item the exact integers $W_0,\dots,W_{47}$ are computed from the recurrences;
\item the exact inequalities \eqref{eq:F47-negative} and \eqref{eq:min-initial} are verified by exact integer arithmetic.
\end{itemize}
Both are checked via an independent SageMath computation.
}
\end{remark}

\subsection{Upper bound}\label{s7.4}
The analysis in this subsection is, essentially, due to Microsoft Copilot.
We refer to \cite{FS} as a general reference for 
details on analysis of generating functions.

Fix a real number $c>1$.  Define
\[
 X_n:=\sum_{k\ge 0} c^k a(n,k),
 \qquad
 Y_n:=\sum_{k\ge 1} c^k b(n,k)
 \qquad (n\ge 0),
\]
and set also
\[
 X_{-1}:=\sum_{k\ge 0} c^k a(-1,k)=1.
\]
Since all coefficients are nonnegative, we immediately have
\begin{equation}\label{eq:RXcompare}
 A(n)\le X_n,
 \qquad
 B(n)\le Y_n
 \qquad (n\ge 0).
\end{equation}
It will be convenient to shift the $X$-sequence by one index and write
\[
 U_n:=X_{n-1} \qquad (n\ge 0),
 \qquad
 V_n:=Y_n \qquad (n\ge 0).
\]
Thus $U_0=X_{-1}=1$ and
\[
 U_1=X_0=a(0,0)+ca(0,1)=1+c,
 \qquad
 V_0=Y_0=cb(0,1)=c.
\]

\begin{lemma}\label{lem:weighted-ineq}
For every $n\ge 1$, we have
\begin{equation}\label{eq:U-ineq}
 U_{n+1}\le U_n+V_n+\sum_{j=0}^{n-1} V_j\,U_{n-1-j},
\end{equation}
and
\begin{equation}\label{eq:V-ineq}
 V_n\le cU_n+\frac{c^2}{c-1}\sum_{j=0}^{n-1} U_j\,V_{n-1-j}.
\end{equation}
\end{lemma}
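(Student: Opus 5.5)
The plan is to substitute the recurrences of Subsection~\ref{s1.2} into the definitions of $U_n=X_{n-1}$ and $V_n=Y_n$, swap the order of summation, and then bound each piece. Two facts do all the work: all the $a(\cdot,\cdot)$ and $b(\cdot,\cdot)$ are nonnegative (by Theorem~A they are the counts $r,p$), and $c>1$, so an unweighted sum $\sum_i a(m,i)$ is at most $X_m$.

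For \eqref{eq:U-ineq}, write $U_{n+1}=X_n=a(n,0)+\sum_{k\ge1}c^k a(n,k)$.
\begin{itemize}
\item By \eqref{recurrence:a1}, $a(n,0)=\sum_{i}a(n-1,i)\le X_{n-1}=U_n$.
\item For $k\ge1$, insert \eqref{recurrence:a2}. The term $\sum_k c^k b(n,k)$ is exactly $V_n$.
\item The double sum becomes $\sum_{j=0}^{n-1}\bigl(\sum_k c^k b(j,k)\bigr)\sum_{i=0}^{n-j-1}a(n-j-2,i)$. The inner range $0\le i\le n-j-1$ covers the whole support of $a(n-j-2,\cdot)$, so the last factor equals $A(n-j-2)$. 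Hence it is at most $X_{n-j-2}=U_{n-1-j}$, which is also valid in the edge cases $j=n-1,n-2$ thanks to the conventions $a(-1,0)=a(-2,0)=1$.
\end{itemize}
This gives \eqref{eq:U-ineq}.

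For \eqref{eq:V-ineq}, insert \eqref{recurrence:b} into $V_n=\sum_{k\ge1}c^k b(n,k)$. The first term gives $\sum_k c^k a(n-1,k-1)=cX_{n-1}=cU_n$ exactly. In the triple sum, I first drop the constraint $r\le\min(m-1,k-1)$ down to $r\le k-1$; this only enlarges a sum of nonnegative terms. Then I interchange the order of summation, fixing $m,r,q$ and summing over $k$. The surviving constraints are $r+1\le k$ and $k-1-r\le q$, so $k$ runs from $r+1$ to $r+q+1$. The geometric bound
\[
\sum_{k=r+1}^{r+q+1}c^k\le \frac{c^{r+q+2}}{c-1}=\frac{c^2}{c-1}\,c^r c^q
\]
then turns each $(m,r,q)$ contribution into $\frac{c^2}{c-1}\,c^r a(m-2,r)\,c^q b(n-m,q)$. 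Summing over $r$ and $q$ factors this as $\frac{c^2}{c-1}X_{m-2}Y_{n-m}=\frac{c^2}{c-1}U_{m-1}V_{n-m}$. Reindexing with $j=m-1$, where $0\le j\le n-1$, yields $\frac{c^2}{c-1}\sum_{j=0}^{n-1}U_jV_{n-1-j}$.

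The only delicate point is the $k$-interchange in the triple sum. One must identify the exact range of $k$ after dropping the $\min$, and check that the boundary cases $m=1$ (where $a(-1,0)=1$) and $q$ up to $n-m+1$ agree with the support conventions, so that $X_{-1}=U_0=1$ appears correctly. Everything else is routine bookkeeping.
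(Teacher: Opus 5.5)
Your proof is correct and follows the paper's argument essentially step by step. You bound $a(n,0)$ and the inner $i$-sums by $X_{n-1}$ and $X_{n-j-2}=U_{n-1-j}$ using nonnegativity and $c>1$; then, in the triple sum, you fix $(m,r,q)$, note that $r+1\le k\le r+q+1$, and use $\sum_{k=r+1}^{r+q+1}c^k\le\frac{c^2}{c-1}c^rc^q$ to factor the contribution as $\frac{c^2}{c-1}X_{m-2}Y_{n-m}$. There is one harmless slip in your edge-case remark: for $0\le j\le n-1$ the index $n-j-2$ never goes below $-1$, so only $X_{-1}=U_0=1$ enters and the convention $a(-2,0)=1$ is never used.
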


\begin{proof}
We first prove \eqref{eq:U-ineq}. We have
\[
 a(n,0)=A(n-1)\le X_{n-1}=U_n.
\]
For $k\ge 1$, multiply \eqref{recurrence:a2} by $c^k$ and sum over $k\ge 1$:
\[
 \sum_{k\ge 1} c^k a(n,k)
 =\sum_{k\ge 1} c^k b(n,k)
 +\sum_{j=0}^{n-1}\Bigl(\sum_{k\ge 1} c^k b(j,k)\Bigr)
 \Bigl(\sum_{i=0}^{n-j-1} a(n-j-2,i)\Bigr).
\]
The inner sum over $i$ is bounded by $X_{n-j-2}=U_{n-1-j}$, hence
\[
 \sum_{k\ge 1} c^k a(n,k)
 \le V_n+\sum_{j=0}^{n-1} V_j U_{n-1-j}.
\]
Adding the contribution of $k=0$ gives
\[
 X_n\le X_{n-1}+V_n+\sum_{j=0}^{n-1} V_j U_{n-1-j},
\]
that is,
\[
 U_{n+1}\le U_n+V_n+\sum_{j=0}^{n-1} V_j U_{n-1-j}.
\]
This proves \eqref{eq:U-ineq}.

We now prove \eqref{eq:V-ineq}.  Start from \eqref{recurrence:b}, 
multiply by $c^k$, and sum over $k\ge 1$.
The first term is
\[
 \sum_{k\ge 1} c^k a(n-1,k-1)
 =c\sum_{r\ge 0} c^r a(n-1,r)
 =cX_{n-1}=cU_n.
\]
For the second term, fix $m,r,p$.  The constraints in 
\eqref{recurrence:b} imply that the admissible values of $k$ satisfy
\[
 r+1\le k\le r+p+1.
\]
Therefore
\[
 \sum_{k=r+1}^{r+p+1} c^k
 =c^{r+1}\frac{c^{p+1}-1}{c-1}
 \le \frac{c^{r+1}c^{p+1}}{c-1}
 =\frac{c^2}{c-1}c^r c^p.
\]
Using this estimate and summing over all $m,r,p$ gives
\[
 Y_n
 \le cX_{n-1}
 +\frac{c^2}{c-1}
 \sum_{m=1}^n
 \Bigl(\sum_{r\ge 0} c^r a(m-2,r)\Bigr)
 \Bigl(\sum_{p\ge 1} c^p b(n-m,p)\Bigr).
\]
Hence
\[
 Y_n\le cX_{n-1}+\frac{c^2}{c-1}\sum_{m=1}^n X_{m-2}Y_{n-m}.
\]
Replacing $X_{m-2}$ by $U_{m-1}$ and then relabelling $j=m-1$ yields
\[
 V_n\le cU_n+\frac{c^2}{c-1}\sum_{j=0}^{n-1} U_j V_{n-1-j},
\]
as claimed.
\end{proof}

Define sequences $\widehat U_n$, $\widehat V_n$ by
\[
 \widehat U_0=1,
 \qquad
 \widehat U_1=1+c,
 \qquad
 \widehat V_0=c,
\]
and for every $n\ge 1$,
\begin{equation}\label{eq:U-majorant}
 \widehat U_{n+1}=\widehat U_n+\widehat V_n+\sum_{j=0}^{n-1} \widehat V_j\,\widehat U_{n-1-j},
\end{equation}
\begin{equation}\label{eq:V-majorant}
 \widehat V_n=c\widehat U_n+\frac{c^2}{c-1}\sum_{j=0}^{n-1} \widehat U_j\,\widehat V_{n-1-j}.
\end{equation}
Because the right-hand sides are monotone in all variables and the initial inequalities are equalities, an induction on $n$ yields
\begin{equation}\label{eq:majorization}
 U_n\le \widehat U_n,
 \qquad
 V_n\le \widehat V_n
 \qquad (n\ge 0).
\end{equation}

Now introduce the ordinary generating functions
\[
 \widehat U(z):=\sum_{n\ge 0} \widehat U_n z^n,
 \qquad
 \widehat V(z):=\sum_{n\ge 0} \widehat V_n z^n.
\]

\begin{lemma}\label{lem:GF-system}
The generating functions $\widehat U(z)$ and $\widehat V(z)$ satisfy
\begin{equation}\label{eq:GF1}
 \widehat U(z)=1+z\widehat U(z)+z\widehat V(z)+z^2\widehat U(z)\widehat V(z),
\end{equation}
\begin{equation}\label{eq:GF2}
 \widehat V(z)=c\widehat U(z)+\frac{c^2}{c-1}z\widehat U(z)\widehat V(z).
\end{equation}
\end{lemma}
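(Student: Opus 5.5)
This is a direct coefficient comparison: multiply the defining recurrences \eqref{eq:U-majorant} and \eqref{eq:V-majorant} by $z^n$, sum over the indices where they hold, and add the initial terms by hand. Two facts do the bookkeeping. The convolution $\sum_{j=0}^{n-1}\widehat V_j\widehat U_{n-1-j}$ is the coefficient of $z^{n-1}$ in $\widehat U(z)\widehat V(z)$. Also, $\widehat U$ and $\widehat V$ are formal power series with nonnegative coefficients, so all manipulations are identities in $\mathbb{R}[[z]]$ and no convergence issue arises.

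For \eqref{eq:GF2}, multiply \eqref{eq:V-majorant} by $z^n$ and sum over $n\ge 1$. This gives
\[
\widehat V(z)-\widehat V_0=c\bigl(\widehat U(z)-\widehat U_0\bigr)+\frac{c^2}{c-1}\,z\,\widehat U(z)\widehat V(z).
\]
Here the convolution sum $\sum_{n\ge1}z^n\sum_{j=0}^{n-1}\widehat U_j\widehat V_{n-1-j}$ equals $z\widehat U(z)\widehat V(z)$. Since $\widehat V_0=c=c\widehat U_0$, the constant terms cancel, and \eqref{eq:GF2} follows. Equivalently, \eqref{eq:V-majorant} also holds at $n=0$ with an empty sum, so one can sum over all $n\ge 0$ directly.

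For \eqref{eq:GF1}, multiply \eqref{eq:U-majorant} by $z^{n+1}$ and sum over $n\ge1$. The left side is $\widehat U(z)-1-(1+c)z$. On the right, the term $\widehat U_n$ gives $z(\widehat U(z)-1)$, the term $\widehat V_n$ gives $z(\widehat V(z)-c)$, and the convolution gives $z^2\widehat U(z)\widehat V(z)$, because its $n=0$ term is an empty sum. Collecting terms, the leftover constants are $-1-(1+c)z$ on the left and $-z-cz$ on the right. These cancel up to the constant $1$, which gives exactly \eqref{eq:GF1}. The main point to check, and essentially the only one, is this consistency: the initial value $\widehat U_1=1+c$ must agree with what the recurrence would give at $n=0$ with the convention $\widehat V_{-1}=0$. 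It does, since $\widehat U_0+\widehat V_0=1+c$.
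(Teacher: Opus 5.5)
Your proposal is correct and follows the paper's proof. Both multiply \eqref{eq:U-majorant} and \eqref{eq:V-majorant} by powers of $z$, identify the convolutions with $z\widehat U(z)\widehat V(z)$, and let the initial values $\widehat U_0=1$, $\widehat U_1=1+c$, $\widehat V_0=c$ cancel the boundary terms. The only cosmetic difference is that you multiply \eqref{eq:U-majorant} by $z^{n+1}$ rather than $z^n$, which avoids the division by $z$ on the left-hand side.
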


\begin{proof}
Multiply \eqref{eq:U-majorant} by $z^n$ and sum over $n\ge 1$.  The left-hand side is
\[
 \sum_{n\ge 1} \widehat U_{n+1} z^n
 =\frac{\widehat U(z)-\widehat U_0-\widehat U_1 z}{z}.
\]
The first two terms on the right-hand side give
\[
 \sum_{n\ge 1} \widehat U_n z^n=\widehat U(z)-\widehat U_0,
 \qquad
 \sum_{n\ge 1} \widehat V_n z^n=\widehat V(z)-\widehat V_0.
\]
The convolution term becomes
\[
 \sum_{n\ge 1}\sum_{j=0}^{n-1}\widehat V_j\widehat U_{n-1-j}z^n
 =z\widehat U(z)\widehat V(z).
\]
Hence
\[
 \frac{\widehat U(z)-\widehat U_0-\widehat U_1 z}{z}
 =\widehat U(z)-\widehat U_0+\widehat V(z)-\widehat V_0+z\widehat U(z)\widehat V(z).
\]
Substituting $\widehat U_0=1$, $\widehat U_1=1+c$, $\widehat V_0=c$ and simplifying gives \eqref{eq:GF1}.

Similarly, multiply \eqref{eq:V-majorant} by $z^n$ and sum over $n\ge 1$.  Then
\[
 \sum_{n\ge 1}\widehat V_n z^n=\widehat V(z)-\widehat V_0,
\qquad
 c\sum_{n\ge 1}\widehat U_n z^n=c\bigl(\widehat U(z)-\widehat U_0\bigr),
\]
and
\[
 \sum_{n\ge 1}\sum_{j=0}^{n-1}\widehat U_j\widehat V_{n-1-j}z^n
 =z\widehat U(z)\widehat V(z).
\]
Thus
\[
 \widehat V(z)-\widehat V_0
 =c\bigl(\widehat U(z)-\widehat U_0\bigr)
 +\frac{c^2}{c-1}z\widehat U(z)\widehat V(z).
\]
Using $\widehat U_0=1$ and $\widehat V_0=c$ yields \eqref{eq:GF2}.
\end{proof}

From now on, we set
\[
 c=\frac32,
 \qquad \text{ then }\qquad
 \frac{c^2}{c-1}=\frac92.
\]
In this case, \eqref{eq:GF2} becomes
\[
 \widehat V(z)=\frac32\,\widehat U(z)+\frac92 z\widehat U(z)\widehat V(z).
\]
Hence
\begin{equation}\label{eq:V-in-terms-of-U}
 \widehat V(z)=\frac{\frac32\,\widehat U(z)}{1-\frac92 z\widehat U(z)}.
\end{equation}
Substituting \eqref{eq:V-in-terms-of-U} into \eqref{eq:GF1} and simplifying gives a quadratic equation for $\widehat U(z)$:
\begin{equation}\label{eq:quadratic-U}
 6z^2\widehat U(z)^2-9z\widehat U(z)^2+4z\widehat U(z)+2\widehat U(z)-2=0.
\end{equation}
Equivalently,
\[
 \bigl(9z-6z^2\bigr)\widehat U(z)^2-(2+4z)\widehat U(z)+2=0.
\]
Solving for the branch regular at $z=0$ and satisfying $\widehat U(0)=1$ yields
\begin{equation}\label{eq:explicit-U}
 \widehat U(z)=\frac{1+2z-\sqrt{1-14z+16z^2}}{3z(3-2z)}.
\end{equation}
The dominant singularity therefore comes from the vanishing of
\[
 1-14z+16z^2=0,
\]
whose smaller positive root is
\begin{equation}\label{eq:rho-1}
 \rho=\frac{7-\sqrt{33}}{16}.
\end{equation}
Hence
\[
 \rho^{-1}=7+\sqrt{33}\approx 12.74456.
\]
This already implies that the majorant sequence $\widehat U_n$ grows at most exponentially with any base strictly larger than $7+\sqrt{33}$.
Altogether, we have proved the following:

\begin{theorem}\label{thm:ub}
Let $\varepsilon>0$. Then, for all $n\gg 0$, we have
$R_n< (7+\sqrt{33}+\varepsilon)^n$.
\end{theorem}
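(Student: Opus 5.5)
The plan is to chain the inequalities already in hand and then read off the growth of the majorant from the explicit generating function \eqref{eq:explicit-U}. By Theorem~A and \eqref{eq:RXcompare}, $R_n=A(n)\le X_n=U_{n+1}$. Lemma~\ref{lem:weighted-ineq} holds for every $c>1$, so we may take $c=\frac32$. Then \eqref{eq:majorization} gives $U_{n+1}\le \widehat U_{n+1}$. It therefore suffices to show that $\widehat U_n\le C_\varepsilon\,(7+\sqrt{33}+\varepsilon/2)^n$ for some constant $C_\varepsilon$. The shift by one index and the constant are then absorbed for $n\gg 0$, giving the strict bound $R_n<(7+\sqrt{33}+\varepsilon)^n$.

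For the growth of $\widehat U_n$, I would argue via the radius of convergence of $\widehat U(z)$. First I need to confirm that the formal power series determined by the recurrences \eqref{eq:U-majorant}--\eqref{eq:V-majorant} coincides with the analytic function \eqref{eq:explicit-U} near $0$.

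\begin{itemize}
\item The system in Lemma~\ref{lem:GF-system} determines the coefficients uniquely as formal power series.
\item Eliminating $\widehat V$ gives the quadratic \eqref{eq:quadratic-U}. Of its two roots, only the branch with the minus sign is regular at $0$ with value $1$; the other branch has a pole at $z=0$.
\item So the formal solution equals \eqref{eq:explicit-U}.
\end{itemize}

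Next I would check that \eqref{eq:explicit-U} is analytic in the disc $|z|<\rho$, where $\rho=(7-\sqrt{33})/16$.

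\begin{itemize}
\item The square root $\sqrt{1-14z+16z^2}$ is analytic there, since both roots of $16z^2-14z+1$ are real and positive with the smaller one equal to $\rho$.
\item The denominator $3z(3-2z)$ vanishes only at $z=0$, where the singularity is removable by the choice of branch, and at $z=3/2>\rho$.
\item One should also confirm that $\widehat V=\frac32\widehat U/(1-\frac92 z\widehat U)$ stays consistent in this disc. This is not strictly needed for $\widehat U$ itself.
\end{itemize}

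Hence the radius of convergence of $\widehat U$ is at least $\rho$. By Cauchy--Hadamard, $\limsup \widehat U_n^{1/n}\le\rho^{-1}=7+\sqrt{33}$, which yields $\widehat U_n\le (7+\sqrt{33}+\varepsilon/2)^n$ for large $n$.

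The main obstacle is the rigorous identification of the formal solution with the analytic branch, together with verifying that no other singularity, in particular the removable one at $z=0$, lies closer than $\rho$. I would handle this by expanding $1+2z-\sqrt{1-14z+16z^2}$ to first order: $\sqrt{1-14z+16z^2}=1-7z+O(z^2)$, so the numerator is $9z+O(z^2)$. This is divisible by $z$, and dividing by $9z$ gives $\widehat U(0)=1$, matching $\widehat U_0=1$. Positivity of the coefficients is not even needed, because Cauchy--Hadamard only requires analyticity in the disc.
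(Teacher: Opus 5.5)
Your proposal is correct and follows essentially the same route as the paper. You chain $R_n=A(n)\le X_n=U_{n+1}\le\widehat U_{n+1}$ via Lemma~\ref{lem:weighted-ineq} and the majorization with $c=\frac32$, then bound $\widehat U_n$ using the explicit branch \eqref{eq:explicit-U}, which is analytic on $|z|<\rho=(7-\sqrt{33})/16$. Your added details (the power series is the minus-sign root because the other root has a pole at $0$, the singularity at $z=0$ is removable, and Cauchy--Hadamard) spell out what the paper compresses into its remark about the dominant singularity.
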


\subsection{Consequence}\label{s7.5}

Combining Theorem~\ref{thm:lbound} with 
Theorem~\ref{thm:ub}, we have:

\begin{corollary}\label{cor:growth}
For $n\gg 0$, we have 
$9^n < R_n< 12.75^n$. 
\end{corollary}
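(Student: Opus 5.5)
The corollary follows by combining Theorem~\ref{thm:lbound} and Theorem~\ref{thm:ub}. The plan is to treat the two inequalities separately and to check in each case that the constants from those theorems can be absorbed once $n$ is large enough.

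\emph{Lower bound.} By Theorem~\ref{thm:lbound}, $R_n\ge \frac1{12}9^n$ for all $n\ge 0$. This does not give the strict inequality $9^n<R_n$ directly, because of the factor $\frac1{12}$. I would first try to pick a base $\beta>9$ with $R_n\ge C\beta^n$ for some constant $C>0$; then $C\beta^n>9^n$ as soon as $(\beta/9)^n>1/C$, which holds for $n\gg 0$. To obtain such a $\beta$, I would redo the subsolution argument of Subsection~\ref{s7.3} with $x=9+\delta$ for a small $\delta>0$. The polynomial $F_{47}$ is continuous and $F_{47}(9)<0$ by \eqref{eq:F47-negative}, so $F_{47}(9+\delta)<0$ for $\delta$ small enough. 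Hence $(9+\delta)^n$ is still a subsolution of \eqref{eq:R-lower-trunc}. Choosing $C$ small enough to cover the finitely many initial values $0\le n\le 48$, the same induction as in the proof of Theorem~\ref{thm:lbound} gives $R_n\ge C(9+\delta)^n$ for all $n$. This is the main obstacle: strictness needs this slight strengthening of Theorem~\ref{thm:lbound}, not just a citation of it.

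\emph{Upper bound.} Set $\varepsilon=12.75-(7+\sqrt{33})$. Since $7+\sqrt{33}\approx 12.74456$, we have $\varepsilon>0$. Theorem~\ref{thm:ub} then gives $R_n<(7+\sqrt{33}+\varepsilon)^n=12.75^n$ for all $n\gg 0$.

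Both inequalities hold for all sufficiently large $n$, and taking the larger of the two thresholds proves the corollary.
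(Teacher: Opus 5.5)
Your proof is correct and follows the same route as the paper. The paper also gets strictness on the left from the fact that $F_{47}(9)<0$ forces the largest root of $F_{47}$ to exceed $9$, which absorbs the factor $\frac{1}{12}$, and it reads off the upper bound directly from the $7+\sqrt{33}+\varepsilon$ estimate; you just make explicit the choice of $9+\delta$ and of the constant $C$ for $0\le n\le 48$, which the paper leaves implicit.
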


\begin{proof}
The only non-trivial thing here to note is that 
from the proof of Theorem~\ref{thm:lbound} we see
that the maximal root of $F_{47}(x)$ 
is strictly bigger than $9$, however, we
do not know exactly what it is. Therefore  our sequence
grows exponentially with some base strictly greater
than $9$, so that we can remove the factor $\frac{1}{12}$
and make the inequality on the left hand side strict for $n\gg 0$.
\end{proof}

\section{Further connections}\label{s9}

\subsection{Connection to naturally labeled partial orders}\label{s9.1}

The sequence A006455 in \cite{OEIS} enumerates naturally labeled partial orders
on $\underline{n}$, for $n\in\mathbb{Z}_{>0}$. A partial order on 
$\underline{n}$ is {\em naturally labeled} provided that it is a subset of
the usual (strict) linear order on $\underline{n}$. This usual linear order
consists of all pairs $(x,y)\in \underline{n}^2$ such that $x<y$.

Sending $(x,y)\in \underline{n}^2$ such that $x<y$ to 
$(x-1,n-1-y)\in\mathcal{Q}(n-2)$ defines a bijection between the 
usual linear order on $\underline{n}$ and $\mathcal{Q}(n-2)$.
Given a naturally labeled partial order on $\underline{n}$,
its transitivity translates to the following condition for 
the corresponding subset $Y$ of $\mathcal{Q}(n-2)$: if
$(a,b)$ and $(c,d)$ are in $Y$ and $(\max(a,c),\max(b,d))=n-1$,
then $(\min(a,c),\min(b,d))$ is in $Y$.
All elements in $\mathcal{R}(n-2)$ satisfy this condition and
hence can be interpreted as naturally labeled partial orders
on $\underline{n}$.

We note that the sequence A006455 starts off as follows:
\begin{displaymath}
1,\, 2,\, 7,\, 40,\, 357,\, 4824,\, 96428 
\end{displaymath}
and hence quickly diverges from $R_{n-2}$.
The growth of A006455 is explicitly described  in \cite{BPS},
it is superexponential with the 
leading term $2^{n^2/4}$. 
%
%
%

\subsection{Connection to Catalan numbers}\label{s9.3}

From the computed values of $a$, one could observe that
$a(n,n+1)$ seems to be a Catalan number. This connection is explained
in this subsection with all details.

For any $n\in\mathbb{Z}_{\geq 0}$, 
let $\mathcal{R}'(n)$ denote the set of all 
$Y=(Y_n,Y_{n-1},\dots,Y_0)\in\mathcal{R}(n)$ satisfying
$Y_0=\{0,1,2,\dots,n\}$. It turns out that 
the sets $\mathcal{R}'(n)$, for $n\in\mathbb{Z}_{\geq 0}$, 
are enumerated by Catalan numbers (sequence 
A000108 in \cite{OEIS}). For example, here 
are the five elements of $\mathcal{R}'(2)$:
\begin{displaymath}
\begin{array}{ccc}0&&\\0&0&\\1&1&1\end{array}\qquad 
\begin{array}{ccc}0&&\\0&1&\\1&1&1\end{array}\qquad 
\begin{array}{ccc}0&&\\1&1&\\1&1&1\end{array}\qquad 
\begin{array}{ccc}1&&\\0&0&\\1&1&1\end{array}\qquad 
\begin{array}{ccc}1&&\\1&1&\\1&1&1\end{array}.
\end{displaymath}
A significant amount of details in the arguments below
is due to Microsoft Copilot, however, the final version
did require some correction and insertion of more details.

\begin{proposition}\label{prop:catalan}
We have $R'_n:=|\mathcal{R}'(n)|=\frac{1}{n+2}\binom{2n+2}{n+1}$.  
\end{proposition}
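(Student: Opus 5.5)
The plan is to avoid a direct bijection with Dyck paths. Instead, I would extract a Catalan-type recurrence from the first-column recurrences of Section~\ref{s3new}, namely Proposition~\ref{prop:rec1} and Proposition~\ref{prop:rec2}.

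\textbf{Step 1: reduce to the first column.} Condition~$(\star)$ and the set $\mathcal{Q}(n)$ are both invariant under the swap $(x,y)\mapsto(y,x)$. So transposition gives a bijection between $\mathcal{R}'(n)$ (full bottom row) and the elements of $\mathcal{R}(n)$ with full first column, that is, $\mathcal{R}(n)_{n+1}$. Hence $R'_n=r(n,n+1)$, which equals $a(n,n+1)$ by Subsection~\ref{s3new.3}. Set $c_n:=r(n,n+1)$ for $n\ge 0$ and $c_{-1}:=r(-1,0)=1$, which is consistent with the conventions of Subsection~\ref{s3new.1}.

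\textbf{Step 2: specialize the recurrences to $k=n+1$.} In \eqref{prop:rec1.2} with $k=n+1$, every $p(j,n+1)$ with $j\le n-1$ vanishes, because $n+1>j+1$. This gives $c_n=p(n,n+1)$. Next, in \eqref{prop:rec2.1} with $k=n+1$, the first term is $r(n-1,n)=c_{n-1}$. In the triple sum, the inner range $q\ge k-1-r=n-r$ together with $q\le n-m+1$ forces $r\ge m-1$. Combined with $r\le \min(m-1,n)$, this leaves only $r=m-1$ and $q=n-m+1$. The summand then becomes $r(m-2,m-1)\,p(n-m,n-m+1)=c_{m-2}\,c_{n-m}$, using Step~2's identity $c_j=p(j,j+1)$ for $j\ge0$. (For $m=1$ the factor $r(-1,0)=1=c_{-1}$ is consistent with this.) Therefore, for $n\ge1$,
\begin{displaymath}
c_n=c_{n-1}+\sum_{m=1}^{n}c_{m-2}\,c_{n-m}.
\end{displaymath}

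\textbf{Step 3: identify the Catalan recurrence.} Put $d_k:=c_{k-1}$, so that $d_0=1$ and $d_1=c_0=r(0,1)=1$. The recurrence becomes
\begin{displaymath}
d_{n+1}=d_nd_0+\sum_{i=0}^{n-1}d_i\,d_{n-i}=\sum_{i=0}^{n}d_i\,d_{n-i},
\end{displaymath}
after substituting $i=m-1$. This is the standard Catalan recurrence, so $d_k=\frac{1}{k+1}\binom{2k}{k}$. Hence $R'_n=d_{n+1}=\frac{1}{n+2}\binom{2n+2}{n+1}$.

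\textbf{Main obstacle.} The step I expect to need the most care is the boundary bookkeeping in Step~2. One must confirm that the summation limits in \eqref{prop:rec2.1} really collapse to the single term $r=m-1$, $q=n-m+1$. One must also check that the conventions for $r(-1,0)$ and $r(m-2,\cdot)$ when $m=1$ match $c_{-1}=1$. A useful sanity check is the computed values $a(n,n+1)=1,2,5,14,42,\dots$ from Section~\ref{s5}, which are the Catalan numbers.
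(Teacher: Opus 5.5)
Your proof is correct, but it takes a different route from the paper.

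The paper works inside $\mathcal{R}'(n)$ directly. It shows that every row above the full bottom row is empty or a suffix, and encodes $Y$ by its threshold sequence. It then characterizes admissible thresholds in Lemma~\ref{lem:comparison} and splits according to $h_1$: row $1$ is empty, full, or starts at $1\le h_1\le n-1$, and the last case produces two non-interacting blocks.

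You instead make the transposition symmetry explicit; the paper leaves it implicit when it links $a(n,n+1)$ to $R'_n$. You then observe that at $k=n+1$ the general first-column recurrences of Propositions~\ref{prop:rec1} and~\ref{prop:rec2} collapse to a closed recurrence along the diagonal. Your recurrence $c_n=2c_{n-1}+\sum_{m=2}^{n}c_{m-2}c_{n-m}$ is literally \eqref{eq:bnrec} with $t=m-1$. The decomposition behind it differs, however. Transposed back, you split by the lowest point $(0,m)$, $m\ge 1$, of the first column (if any), whereas the paper splits by the leftmost point of row $1$. Already for $n=2$ these give different partitions of $\mathcal{R}'(2)$.

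Your route buys brevity. It also explains directly why the diagonal $a(n,n+1)$ in Section~\ref{s5} is Catalan: the diagonal of the $(a,b)$-recurrence is self-contained. Its cost is reliance on Proposition~\ref{prop:rec2}, whose general proof is the most delicate in the paper. Still, the only case you need is easy to confirm by hand. A full first column forces the cyan and magenta columns to be full and the orange entry to be $1$, which leaves exactly $r(m-2,m-1)\,p(n-m,n-m+1)$. Also, $c_n=p(n,n+1)$ needs no recurrence at all, since a full first column contains the origin.

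The paper's route, in turn, is self-contained and gives a structural description of $\mathcal{R}'(n)$ through threshold sequences. The paper reuses exactly this in Proposition~\ref{prop:anotherconnection}. There the prescribed bottom row is a positional condition rather than a count, so the first-column recurrences do not directly apply.
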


\begin{proof}
It is easy to check that $R'_0=1$, $R'_1=2$ and $R'_2=5$.
To prove the claim, we will show that the sequence
$R'_n$ satisfies the usual recurrence for
Catalan numbers.

Fix $n\ge 0$ and let $Y=(Y_n,Y_{n-1},\dots,Y_0)\in \mathcal{R}'(n)$.
For each $j\in\{1,2,\dots,n\}$, consider the $j$-th horizontal row
\[
H_j:=\{x\in\{0,1,\dots,n-j\} : (x,j)\in Y\}.
\]
Our first observation
is that, for any element $j\in\{1,2,\dots,n\}$, 
the set $H_j$ is either empty or a suffix of $\{0,1,\dots,n-j\}$.
Equivalently, there exists a number
\[
h_j\in\{0,1,\dots,n-j\}\cup\{\infty\}
\]
such that
\[
H_j=\{x\in\{0,1,\dots,n-j\}: x\ge h_j\},
\]
with the convention that $H_j=\varnothing$ when $h_j=\infty$.
Indeed, let $(x,j)\in Y$ and $x'\ge x$ with $x'+j\le n$. 
Since $(x',0)\in Y$, Condition~$(\star)$ forces $(x',j)\in Y$.
This means that our set $Y$ is completely determined by its 
{\em threshold sequence}
$(h_n,\dots,h_2,h_1)$, where all $h_j$ are defined above.
Now we make the following observation.

\begin{lemma}\label{lem:comparison}
Let $Y=(Y_n,\dots,Y_0)\subseteq \mathcal{Q}(n)$ be such that
$Y_0=\{0,\dots,n\}$, and let $(h_n,\dots,h_1)$ be its threshold sequence.
As above, we assume that each $H_j$ is either empty or a suffix of $\{0,1,\dots,n-j\}$.
Then $Y\in \mathcal{R}(n)$ if and only if, for all pairs $1\le i<j\le n$, one has
\begin{equation}\label{eq:comp}
 h_i\le n+1-j \quad\Longrightarrow\quad h_i\le h_j.
\end{equation}
Equivalently, $h_i>h_j$ implies $h_i+j\ge n+2$.
\end{lemma}

\begin{proof}
We first assume $Y\in \mathcal{R}(n)$ and prove \eqref{eq:comp}. 
Suppose $i<j$ and $h_i\le n+1-j$. If $h_j=\infty$, then automatically $h_i\le h_j$, so there is nothing to prove. Assume therefore that $h_j<\infty$. The points $(h_i,i)$ and $(h_j,j)$ both lie in $Y$, and
\[
\max(h_i,h_j)+\max(i,j)\le h_i+j\le n+1
\]
whenever $h_i>h_j$. Applying Condition~$(\star)$ to these two points would then force
\[
(\min(h_i,h_j),\min(i,j))=(h_j,i)\in Y,
\]
contradicting the minimality of $h_i$. Hence $h_i\le h_j$. This proves \eqref{eq:comp}.

Conversely, assume the thresholds satisfy \eqref{eq:comp}. 
We need to show that $Y\in \mathcal{R}(n)$. 
Let $(a,i),(c,j)\in Y$ with $i\le j$ and assume
\begin{equation}\label{eq:assumtion}
\max(a,c)+j\le n+1.
\end{equation}
We need to show that $(\min(a,c),i)\in Y$ and, additionally,  
that $(\max(a,c),j)\in Y$ whenever the latter lies in $\mathcal{Q}(n)$.

If $a\le c$, then $(\min(a,c),i)=(a,i)$ is already in $Y$. 
If $a>c$, then the fact that $a\ge h_i$ combined with 
the assumption in \eqref{eq:assumtion} gives
\[
 h_i\le a\le n+1-j.
\]
By \eqref{eq:comp}, we have $h_i\le h_j$. 
Since $(c,j)\in Y$, we know $c\ge h_j$, hence $c\ge h_i$, 
so $(c,i)\in Y$. This proves that $(\min(a,c),i)\in Y$.

For the second point, if $a\le c$, then $(\max(a,c),j)=(c,j)$ 
is already in $Y$. If $a>c$, then $c\ge h_j$ because $(c,j)\in Y$, 
and therefore $a>c\ge h_j$, so $(a,j)\in Y$ whenever $(a,j)\in \mathcal{Q}(n)$. 
Thus $(\max(a,c),j)\in Y$ whenever it belongs to $\mathcal{Q}(n)$.
This completes the proof.
\end{proof}

Let $Y\in\mathcal{R}'(n)$ and $(h_n,\dots,h_1)$ be its
threshold sequence. 
Since row $1$ has available $x$-coordinates $0,1,\dots,n-1$, we have
$h_1\in\{0,1,\dots,n-1,\infty\}$.
We distinguish three cases.

{\bf Case 1: $h_1=\infty$.} The row with the $y$-coordinate $1$ is empty. 
Then the remaining rows, including the bottom row truncated at the 
penultimate symbol, satisfy exactly the conditions for an element
in $\mathcal{R}'(n-1)$. Hence this case contributes $r'_{n-1}$ objects.

{\bf Case 2: $h_1=0$.}
The row with the $y$-coordinate $1$ is full. Again, deleting the bottom 
row gives an element of $\mathcal{R}'(n-1)$ (and vice versa). Hence this case also
contributes $r'_{n-1}$ objects.

{\bf Case 3: $1\le h_1\le n-1$.}
Applying Lemma~\ref{lem:comparison} with $i=1$, we obtain:
for every $j$ with $1< j\le n+1-h_1$, we have
$h_j\ge h_1$ or $h_j=\infty$.
In particular, when $j=n+1-h_1$, 
the row with the $y$-coordinate 
$j$ has total width $n-j=h_1-1$, so it cannot contain 
a threshold $\ge h_1$. This yields $h_{n+1-t}=\infty$.
Hence we can split the rows (ignoring the bottom row) 
into two independent blocks:
\begin{itemize}
\item the \emph{lower block}, consisting of all
rows with the $y$-coordinates $1,2,\dots,n-h_1$,
note that the thresholds in all these rows are at least $h_1$;
\item the \emph{upper block}, consisting of all
rows with the $y$-coordinates  $n+2-h_1,\dots,n$.
\end{itemize}
There is no interaction between the two blocks, because 
if $1\le i\le n-h_1$ and $n+2-h_1\le j\le n$, 
then every finite $h_i$ satisfies $h_i\ge h_1$, and hence
\[
 h_i+j\ge h_1+(n+2-h_1)=n+2,
\]
so Condition~$(\star)$ is vacuous across the two blocks.
Here is a pictorial illustration of this phenomenon,
for $n=9$ and $h_1=4$, with the non-trivial parts of the
two blocks colored in {\color{violet}violet} and {\color{teal}teal}.
\begin{displaymath}
\xymatrix@R=0.2mm@C=0.2mm{
{\color{teal}\bullet}&&&&&&&&&\\
{\color{teal}\bullet}&{\color{teal}\bullet}&&&&&&&&\\
{\color{teal}\bullet}&{\color{teal}\bullet}&{\color{teal}\bullet}&&&&&&&&\\
0&0&0&0&&&&&&\\
0&0&0&0&{\color{violet}\bullet}&&&&&\\
0&0&0&0&
{\color{violet}\bullet}&{\color{violet}\bullet}&&&&\\
0&0&0&0&{\color{violet}\bullet}&{\color{violet}\bullet}&{\color{violet}\bullet}&&&\\
0&0&0&0&
{\color{violet}\bullet}&{\color{violet}\bullet}&{\color{violet}\bullet}&
{\color{violet}\bullet}&&\\
0&0&0&0&{\color{violet}1}&{\color{violet}1}&
{\color{violet}1}&{\color{violet}1}&{\color{violet}1}&\\
{\color{teal}1}&{\color{teal}1}&{\color{teal}1}&{\color{teal}1}&1&1&1&1&1&1\\
}
\end{displaymath}
It follows that the possible elements in the upper block
(combined with an appropriate part of the bottom row) are in bijection with
the elements in $\mathcal{R}(h_1-1)$ while the possible elements in
the non-trivial part of the lower block are in bijection 
with the elements in $\mathcal{R}(n-h_1-1)$.
Therefore this case contributes $R'_{n-h_1-1}\,R'_{h_1-1}$ elements.

Combining the three cases yields the recurrence
\begin{equation}\label{eq:bnrec}
R'_n=2R'_{n-1}+\sum_{t=1}^{n-1} R'_{t-1}R'_{n-t-1}\qquad (n\ge 1),
\end{equation}
which is one of the ways to write the usual Catalan recurrence.
The claim of the proposition follows.
\end{proof}

\subsection{Connection to A137842}\label{s9.4}

Let $\mathcal{R}''(n)$ denote the set of all 
elements $Y\in \mathcal{R}(n)$, where 
$Y=(Y_n,\dots,Y_0)$,
such that $Y_0=\big((2\mathbb{Z}+1)\times \{0\}\big)\cap \mathcal{Q}(n)$.
In other words, the bottom row of $Y$ consists of all 
elements of $\mathcal{Q}(n)$ whose first coordinate is odd
and second coordinate is zero. Set $R''_n:=|\mathcal{R}''(n)|$.
It is easy to check by a direct computation that 
the first values of $R''_n$ are $1,\,1,\,2,\,4,\,10,\,24,\,66$.

\begin{proposition}\label{prop:anotherconnection}
The numbers $(R''_n)_{n\ge 0}$ satisfy $R''_0=R''_1=1$,
and, for every $n\ge 2$,
\begin{equation}\label{eq:mainrec}
 R''_n
 =
 2R''_{n-1}
 +
 \sum_{i=1}^{\lfloor (n-2)/2\rfloor} R''_{2i}\,R''_{n-2i-1}.
\end{equation}
\end{proposition}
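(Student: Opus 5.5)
The plan is to copy the proof of Proposition~\ref{prop:catalan}: first pin down what a single row of $Y\in\mathcal{R}''(n)$ can look like, then split according to the first row $H_1=Y_1$, and check that the generic case splits into two independent blocks. The initial values $R''_0=R''_1=1$ are immediate from the definition. Throughout I write $O_n:=\{x\le n: x \text{ odd}\}$ for the bottom row.

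\textbf{Step 1 (shape of a row).} Apply Condition~$(\star)$ to a pair $(a,0)$, $a\in O_n$, and $(c,j)\in Y$ with $\max(a,c)\le n+1-j$. There are two consequences.
\begin{itemize}
\item If $c<a$, then $(c,0)\in Y$ is forced, so $c$ must be odd.
\item If $a>c$ and $a\le n-j$, then $(a,j)\in Y$ is forced.
\end{itemize}
Consequently every row $H_j$ is empty or an ``odd suffix'' $\{x\ge h_j\}\cap O_{n-j}$. The exceptions occur only near the right end of the row, where no odd $a$ with $c<a\le n+1-j$ exists; there an even entry may be added. I will tabulate these exceptions precisely for each parity of $n-j$. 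I will then prove the analogue of Lemma~\ref{lem:comparison}: the thresholds $h_i$ satisfy \eqref{eq:comp}, and conversely any threshold data satisfying \eqref{eq:comp} gives an element of $\mathcal{R}''(n)$. The argument is the same as for Lemma~\ref{lem:comparison}, with $\min/\max$ replaced by their odd analogues.

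\textbf{Step 2 (case split on $H_1$).} I distinguish three cases.
\begin{itemize}
\item \emph{Extreme case: $H_1=\varnothing$.} Here I aim for a bijection with $\mathcal{R}''(n-1)$.
\item \emph{Extreme case: $H_1$ is the maximal possible odd set.} Here too I aim for a bijection with $\mathcal{R}''(n-1)$.
\item \emph{Generic case: $h_1$ strictly between.} As in Case~3 of the Catalan proof, \eqref{eq:comp} with $i=1$ forces $h_j\ge h_1$ for all rows $j\le n+1-h_1$. The row of height $n+1-h_1$ is too short to contain such a threshold, so it is empty. This splits $\mathcal{Q}(n)$ into a lower-right block and an upper-left block with no interaction under Condition~$(\star)$.
\end{itemize}

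In the first extreme case the bijection with $\mathcal{R}''(n-1)$ is the main obstacle. Naively deleting a row or a column destroys the ``odd bottom row'' normalization, because shifting by one column swaps parity. So I must choose the deletion carefully. My first attempt: remove the bottom row and the empty row $1$ together, together with the first column, which is forced to be $0$ in low rows since $0$ is even. This should shift the picture by two in $y$ and by one in $x$, and I will check whether that restores an odd-indexed bottom row. If that fails, I will instead use the parity exceptions from Step~1 to identify the relevant row of $\mathcal{R}''(n-1)$ directly.

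In the generic case parity is what kills half the terms. Because $h_1$ must be odd, the upper-left block sits over an initial segment of length $h_1-1=2i$ of the bottom row. That segment starts at $x=0$ and has alternating parity, so its pattern is again of type $\mathcal{R}''$: the block should be in bijection with $\mathcal{R}''(2i)$. The lower-right block, with its part of the bottom row re-anchored at $x=h_1$, should be in bijection with $\mathcal{R}''(n-2i-1)$. The upper limit $i\le\lfloor (n-2)/2\rfloor$ comes from $h_1\le n-1$ together with the exclusion of the extreme case.

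\textbf{Step 3 (conclusion).} Summing the three cases with the Rules of Sum and Product gives \eqref{eq:mainrec}. I will sanity-check the recurrence against the listed values $1,1,2,4,10,24,66$, in particular $R''_4=2\cdot4+R''_2R''_1=10$ and $R''_5=2\cdot 10+R''_2R''_2=24$, to confirm the block sizes and index boundaries before writing details. I expect the main difficulty to be the parity bookkeeping at the right ends of rows and the two extreme-case bijections, rather than the block decomposition itself.
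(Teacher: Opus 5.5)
Your architecture matches the paper's proof. The shared outline is: rows are odd suffixes, you split on $H_1$, the two extreme cases each contribute $R''_{n-1}$, and in the generic case $h_1=2i+1$ the blocks are counted by $R''_{2i}$ and $R''_{n-2i-1}$. However, Step~1 contains a false claim, and the case $H_1=\varnothing$, which you single out as the main obstacle, is not actually handled.

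There are no exceptional even entries. Suppose $(c,j)\in Y$ with $j\ge 1$ and $c$ even. Then $a=c+1$ is odd with $a\le n-j+1$ and $a\le n$, so $(a,0)\in Y$. Your own first bullet then forces $(c,0)\in Y$, which is impossible. Hence $Y$ has no point with even first coordinate at all; this is the first step of the paper's proof. Every $H_j$ is then exactly empty or an odd suffix. The analogue of Lemma~\ref{lem:comparison} goes through with odd thresholds and the ordinary $\min$ and $\max$; no ``odd analogues'' are needed.

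Consequently parity never interferes in the extreme cases, because no horizontal shift is needed. Your proposed deletion of rows $0$, $1$ and column $0$ does not work. What remains is $\{(x,y): x\ge 1,\ y\ge 2,\ x+y\le n\}$, a copy of $\mathcal{Q}(n-3)$. Its bottom row is the unconstrained row $H_2$, which after the one-column shift sits at even positions. So this is not a map to $\mathcal{R}''(n-1)$, and your fallback is not an argument.

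The right bijection deletes only the empty row $1$ and the position $(n,0)$, and moves rows $2,\dots,n$ down by one:
\begin{itemize}
\item row $j$ of $\mathcal{Q}(n)$ and row $j-1$ of $\mathcal{Q}(n-1)$ have the same range $0\le x\le n-j$;
\item the truncated bottom row is precisely the odd pattern of $\mathcal{Q}(n-1)$;
\item both the triggering inequality and the membership test in Condition~$(\star)$ shift consistently;
\item $(n,0)$ cannot interact with any row $j\ge 2$, since $n+j>n+1$.
\end{itemize}
In threshold language, $h_1=\infty$ simply makes every condition involving row $1$ vacuous.

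For $H_1$ odd-full, delete the bottom row. Then $H_1$ becomes the odd bottom row of $\mathcal{Q}(n-1)$, and every instance of $(\star)$ between $Y_0$ and a higher row follows from the corresponding instance with $H_1$.

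The same parity slip reappears in the generic case. Re-anchoring the lower block at $x=h_1=2i+1$ turns odd columns into even ones. Shift by $2i$ instead; column $2i$ is empty. Then the part of rows $1,\dots,n-2i$ lying in columns $x\ge 2i$, with $H_1$ as its bottom row, is exactly an element of $\mathcal{R}''(n-2i-1)$.
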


\begin{proof}
The proof is very similar to that of Proposition~\ref{prop:catalan}.
To start with, from Condition~$(\star)$ it follows immediately that
$Y\in \mathcal{R}''(n)$ contains no points for which the
first coordinate is even.

The next step is that, similarly to Proposition~\ref{prop:catalan},
each row $Y_i$, for $Y\in \mathcal{R}''(n)$, is either empty or 
an odd suffix (contains all odd points starting from some position). 
Therefore we can split $\mathcal{R}''(n)$ according
to the minimal element  in the row $Y_1$. 

If $Y_1$ is empty, we delete it and the last element in the bottom row
and see that such elements contribute $R''_{n-1}$. If $Y_1$ is 
odd-full, we delete the bottom row and see that such elements 
again  contribute $R''_{n-1}$.

It remains to consider the case when the smallest element in $Y_1$
is $(2i+1,1)$, for some $i\geq 1$. Exactly the same argument as in
Proposition~\ref{prop:catalan} splits $Y$ into an upper and a lower block
where the upper has $R''_{2i}$ possibilities while the lower has
$R''_{n-2i-1}$ possibilities. The application of the Rule of Product
followed by the Rule of Sum finishes the proof.
\end{proof}

Using the recurrence, we can compute more initial terms of
$R''_{n}$:
\[
1,\ 1,\ 2,\ 4,\ 10,\ 24,\ 66,\ 172,\ 498,\ 1360,\ 4066,\ 11444,\ 34970,\ 100520,\dots
\]
This matches the sequence A137842 on \cite{OEIS}
(see also its even-odd splits  A027307, A032349).
The sequence A137842 is defined as follows:
number of paths from $(0,0)$ if $n$ is even, or from $(2,1)$ 
if $n$ is odd, to $(3n,0)$ that stay in first quadrant 
(but may touch horizontal axis) and where each step is 
$(2,1)$, $(1,2)$ or $(1,-1)$. We can split such paths 
according to the first touch of the horizontal axis,
which gives the same recurrence as in 
Proposition~\ref{prop:anotherconnection}.
Therefore $R''_{n}$ is, indeed, given by the sequence A137842.

Of course, instead of looking at odd points in the bottom row,
we can look at even points, but this ultimately 
gives the same sequence, up to shift.

\subsection{Convex topologies on an integer interval}\label{s9.5}

The sequence A234268 in \cite{OEIS}
is defined as the number of convex topologies 
on an $n$-point totally ordered set. A topology is called {\em convex}
if it has a base of convex sets, or equivalently, if each point 
has a neighborhood base of convex sets, see \cite{CR}.
For a finite totally ordered set, convex subsets are exactly the intervals.
As mentioned in Subsection~\ref{s2.9}, there is a natural bijection
between the elements of $\mathcal{Q}(n)$ and intervals 
of a finite totally ordered $n+1$-element set. Via this bijection,
this totally ordered $n+1$-element set itself corresponds to the origin
in $\mathcal{Q}(n)$.

Consequently, mapping a topology to the set of intervals contained in 
it defines a bijection between the set of convex topologies 
on an $n+1$-point totally ordered set and $\mathcal{P}(n)$.
Here we emphasize two points:
\begin{itemize}
\item the origin must be included due to axioms of topology,
therefore we have a bijection with $\mathcal{P}(n)$ and not 
with $\mathcal{R}(n)$.
\item the principal point of the connection is that Condition~$(\star)$,
on the topological side, represents the fact that the union of
two intervals is an interval precisely when the original two intervals
intersect or are adjacent, that is, precisely when Condition~$(\star)$
is triggered.
\end{itemize}
Theorem~A thus provides a recurrence for $|\mathcal{P}(n)|$.
Since we established that $|\mathcal{P}(n)|$ and $|\mathcal{R}(n)|$
have the same growth, see Subsection~\ref{s7.2}, 
our result from Section~\ref{s7}
provide information for the growth of A234268.

The initial values of A234268 available at \cite{OEIS} are:
\begin{displaymath}
1,\, 4,\, 21,\, 129,\, 876,\, 6376,\, 48829,\, 388771,\, 3191849,\, 26864936.
\end{displaymath}
Theorem~A allows for quick computation of further values of this sequence,
see Section~\ref{s5} for details.

\section{Lattice structure on $\mathcal{R}(n)$}\label{s12}

\subsection{$\mathcal{R}(n)$ as a poset with respect to inclusions}\label{s12.1}

The set $\mathbf{2}^{\mathcal{Q}(n)}$ has the natural structure of a poset
with respect to inclusions. This poset is a lattice, where the meet is given
by intersection and the join is given by union. The set 
$\mathcal{R}(n)$ inherits the structure of a poset 
from $\mathbf{2}^{\mathcal{Q}(n)}$. Directly from the definitions it is 
clear that $\mathcal{R}(n)$ is closed with respect to intersections,
in particular, it is a meet-subsemilattice of $\mathbf{2}^{\mathcal{Q}(n)}$.

Given $X\in \mathbf{2}^{\mathcal{Q}(n)}$, set
\begin{displaymath}
\overline{X}:=\bigcap_{X\subset Y\in \mathcal{R}(n)} Y.
\end{displaymath}
Then $\overline{X}\in \mathcal{R}(n)$ by the previous paragraph.
Moreover, $\overline{\cdot}:\mathbf{2}^{\mathcal{Q}(n)}\to \mathcal{R}(n)$
is a closure operator. The set $\mathcal{R}(n)$ is not closed under 
the union of sets, in general
(for example, $\{(1,0)\},\{(0,1)\}\in \mathcal{R}(2)$ while
$\{(1,0),(0,1)\}\not\in \mathcal{R}(2)$). However, 
for $X,Y\in \mathcal{R}(n)$, the element $\overline{X\cup Y}$
is the join of $X$ and $Y$. In particular, $\mathcal{R}(n)$ is a lattice.
This lattice is not distributive in general. For example, for
$\mathcal{R}(1)$, we have 
\begin{displaymath}
\{(0,0)\}\cap\overline{\{(1,0)\}\cup\{(0,1)\}}=
\{(0,0)\}\neq \varnothing =
\overline{\{(0,0)\}\cap\{(1,0)\}}\cap\overline{\{(0,0)\}\cap\{(0,1)\}}.
\end{displaymath}

The poset $\mathcal{R}(n)$ has $\varnothing$ as the minimum element
and $\mathcal{Q}(n)$ as the maximum element.

\subsection{Atoms}\label{s12.2}

Each singleton in $\mathbf{2}^{\mathcal{Q}(n)}$ clearly  belongs to 
$\mathcal{R}(n)$. Since these are exactly 
the atoms of $\mathbf{2}^{\mathcal{Q}(n)}$ it follows that they 
are also exactly the atoms of $\mathcal{R}(n)$.

As a lattice, $\mathcal{R}(n)$ is, clearly, both atomic and atomistic.
The join-irreducible elements are the atoms.

\subsection{Coatoms}\label{s12.3}

For $k=1,2,\dots,n$, define
\begin{displaymath}
\mathtt{C}(n,k)=\{(k,0),(k,1),\dots,(k,n-k)\},\quad 
\mathtt{R}(n,k)=\{(0,k),(1,k),\dots,(n-k,k)\}. 
\end{displaymath}
Note that, for $n=0$, we have $\mathcal{R}(0)=\mathcal{Q}(0)$
with $\varnothing$ being the unique coatom and 
$\mathcal{Q}(0)$ being the unique atom.

\begin{proposition}\label{prop:coatoms}
For $n>0$, the poset  $\mathcal{R}(n)$ has exactly $2n$ coatoms.
These are the elements $\mathcal{Q}(n)\setminus \mathtt{C}(n,i)$
and $\mathcal{Q}(n)\setminus \mathtt{R}(n,k)$, for $k=1,2,\dots,n$.
\end{proposition}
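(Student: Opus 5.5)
The plan has two parts. First, verify that the $2n$ listed sets lie in $\mathcal{R}(n)$ and are pairwise incomparable. Second, show that every proper element of $\mathcal{R}(n)$ is contained in one of them. Together these imply that the listed sets are exactly the maximal proper elements, i.e. the coatoms.

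\emph{Membership and incomparability.} Take $Y=\mathcal{Q}(n)\setminus\mathtt{C}(n,k)$ with $k\geq 1$, and $(a,b),(c,d)\in Y$. The first coordinates of $(\min(a,c),\min(b,d))$ and $(\max(a,c),\max(b,d))$ are among $\{a,c\}$, so neither equals $k$. The minimum always lies in $\mathcal{Q}(n)$, while the maximum is only required when it lies in $\mathcal{Q}(n)$. Hence Condition~$(\star)$ holds. The same argument works for rows. The removed sets $\mathtt{C}(n,k)$ and $\mathtt{R}(n,k')$ are pairwise distinct nonempty segments of distinct lines. Two such segments can meet in at most one point, so no removed set contains another, except possibly when both are singletons. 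For $k=n$ the singletons are $\{(n,0)\}$ and $\{(0,n)\}$, which are distinct. Hence the complements are pairwise incomparable.

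\emph{Local propagation rules.} These drive the main step. Suppose $(x,y)\in\mathcal{Q}(n)\setminus Y$ with $Y\in\mathcal{R}(n)$.
\begin{enumerate}[(i)]
\item If $x,y\geq1$, then $(x-1,y)$ and $(x,y-1)$ cannot both lie in $Y$. Indeed, $\max$-coordinates sum to $x+y\leq n\leq n+1$, so Condition~$(\star)$ would force their coordinatewise maximum $(x,y)$ into $Y$.
\item If $x+y\leq n-1$, then $(x+1,y)$ and $(x,y+1)$ cannot both lie in $Y$. The bound is $x+y+2\leq n+1$, and the minimum of these two points is $(x,y)$.
\end{enumerate}
More generally, $(x,y)$ is forced by any pair $(x',y)$, $(x,y')$ with $x'<x$, $y'<y$. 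It is also forced by any pair with $x'>x$, $y'>y$ and $x'+y'\leq n+1$.

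\emph{Main step.} Let $Y\neq\mathcal{Q}(n)$. I want to show that $Y$ misses an entire row $\mathtt{R}(n,k)$ or column $\mathtt{C}(n,k)$ with $k\geq1$. I argue by contraposition: assume $Y$ meets every row and every column of index $\geq1$, and deduce $Y=\mathcal{Q}(n)$.

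For each $k\geq1$, pick points $(k,y_k)\in Y$ and $(x_k,k)\in Y$. The pair $(k,y_k)$ and $(x_{k'},k')$ triggers $(\star)$ whenever $\max(k,x_{k'})+\max(y_k,k')\leq n+1$. I would then proceed by induction on $n$, possibly on a diagonal or upper-left subtriangle, using the generalized rules above. The base case is $(0,0)$: it is produced from $(1,0)$ and $(0,1)$, or more generally from any points in column~$1$ and row~$1$ whose coordinates are small enough. The inductive step fills in points in order of increasing $x+y$ via the min-rule, and points near the hypotenuse via the max-rule, using (i) and (ii) to rule out a missing point.

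I expect the main obstacle to be this closure claim. Points chosen in distant rows and columns, such as $(n,0)$ and $(0,n)$, do not interact directly, so one must chain interactions carefully. My first attempt would take a missing point $(x,y)$ with $x+y$ maximal. Repeated use of (i) and (ii), together with the generalized forcing rules, should then show that the whole row $y$ or the whole column $x$ is missing, with index $\geq1$. I would handle $(0,0)$ and the hypotenuse boundary cases separately.
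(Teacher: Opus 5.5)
Your logical frame is sound. If the $2n$ listed sets are proper, lie in $\mathcal{R}(n)$, are pairwise incomparable, and every proper $Y\in\mathcal{R}(n)$ lies in one of them, then they are exactly the coatoms. This also spares you the paper's propagation argument that adding any point of $\mathtt{C}(n,k)$ forces the whole column. Your membership check and your forcing rules are correct. The genuine gap is the main step, which carries all the content of the proposition: you never prove that a proper $Y$ misses a full row or column of index $\geq1$. The contrapositive/induction plan states no inductive claim, and your ``first attempt'' stops at ``should then show'', which is precisely what has to be proved. In particular, you never establish the fact that makes the maximal-sum idea work: a missing point of maximal $x+y$ lies on the hypotenuse. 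Only then do the row and the column through it end at that point.

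The missing argument uses only your own rules. Let $(x,y)\notin Y$ with $x+y$ maximal.

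If $x+y\leq n-1$, then $(x+1,y)$ and $(x,y+1)$ lie in $\mathcal{Q}(n)$ and, by maximality, in $Y$, contradicting your rule (ii). Hence $x+y=n$. So the column through $(x,y)$ is $\{(x,0),\dots,(x,y)\}$ and the row through it is $\{(0,y),\dots,(x,y)\}$.

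If some $(x',y)\in Y$ with $x'<x$, your generalized max-rule gives $(x,y')\notin Y$ for all $y'<y$. Then $\mathtt{C}(n,x)$ is entirely missing, and $x\geq1$. Otherwise the whole row $y$ is missing. This is $\mathtt{R}(n,y)$ if $y\geq1$; if $y=0$, then $x=n$ and $\mathtt{C}(n,n)=\{(n,0)\}$ is missing.

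With this inserted, your route is a complete proof, and it differs from the paper's. The paper starts from a missing point of \emph{minimal} $a+b$, which lies on an axis by rule (i), with the origin excluded separately. Say it is $(k,0)$ with $k\geq1$. The paper shows that either $\mathtt{C}(n,k)$ is missing, or, for $m$ minimal with $(k,m)\in Y$, pairing $(k,m)$ with the points $(n+1-m,i)$ removes the whole column $\mathtt{C}(n,n+1-m)$.

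There is also a small slip in your incomparability step. ``Two segments meet in at most one point'' excludes a containment only when the \emph{smaller} set is not a singleton; your ``both singletons'' is not the right exception. So you must also note that $(n,0)$ lies in no $\mathtt{R}(n,k')$ and $(0,n)$ lies in no $\mathtt{C}(n,k)$ for $k,k'\geq1$, which is immediate.
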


The proof below is a significant rewrite of an
original proposal  by Microsoft Copilot which contained quite a few
gaps.

\begin{proof}
We assume that $n>0$.

Let us start by proving that each element of the form 
$\mathcal{Q}(n)\setminus \mathtt{C}(n,i)$
and of the form $\mathcal{Q}(n)\setminus \mathtt{R}(n,k)$, for $k=1,2,\dots,n$,
is a coatom in $\mathcal{R}(n)$. We prove this for the elements
$Y^{(k)}:=\mathcal{Q}(n)\setminus \mathtt{C}(n,k)$, for
$\mathcal{Q}(n)\setminus \mathtt{R}(n,k)$ the proof is similar.

To start with, we claim that 
$Y^{(k)}\in \mathcal{R}(n)$. Indeed, take $(x,y),(u,v)\in Y^{(k)}$.  
Then neither point has first coordinate equal to $k$.  
Therefore both $\min(x,u)\neq k$ and $\max(x,u)\neq k$ and thus
both 
\[
(\min(x,u),\min(y,v))\quad\text{and}\quad (\max(x,u),\max(y,v))
\]
(if the latter belongs to $\mathcal{Q}(n)$) also lie outside 
$\mathtt{C}(n,k)$, hence belong to $Y^{(k)}$.  This shows that $Y^{(k)}\in \mathcal{R}(n)$.

If $k=n$, then the corresponding $\mathcal{Q}(n)\setminus \mathtt{C}(n,k)$
is a coatom in $\mathbf{2}^{\mathcal{Q}(n)}$, hence also in
$\mathcal{R}(n)$. Now assume $k<n$ and let $Z\in \mathcal{R}(n)$ 
be such that $Y^{(k)}\subsetneq Z\subseteq \mathcal{Q}(n)$.  
Then $Z$ contains $(k,j)\in \mathtt{C}(n,k)$, for some $j$.
We claim that this forces $\mathtt{C}(n,k)\subseteq Z$, hence $Z=\mathcal{Q}(n)$,
which is a contradiction.
Indeed, starting from $(k,j)$ one can propagate both upward and downward.
\begin{itemize}
\item If $j<n-k$, then $(k-1,j+1)\in Y_k\subseteq Z$, and
\[
\max\bigl((k,j),(k-1,j+1)\bigr)=(k,j+1)\in \mathcal{Q}(n).
\]
Moreover
\[
\max(k,k-1)+\max(j,j+1)=k+(j+1)\le n+1.
\]
Hence Condition~$(\star)$ forces $(k,j+1)\in Z$.
\item If $j>0$, then $(k+1,j-1)\in Y_k\subseteq Z$ 
(note that $(k+1,j-1)\in \mathcal{Q}(n)$ because
$(k,j)\in \mathcal{Q}(n)$ implies $k+j\le n$), and
\[
\min\bigl((k,j),(k+1,j-1)\bigr)=(k,j-1).
\]
Also
\[
\max(k,k+1)+\max(j,j-1)=(k+1)+j\le n+1.
\]
Therefore $(k,j-1)\in Z$.
\end{itemize}
By iterating these two steps, every point of $\mathtt{C}(n,k)$ belongs to $Z$.  
This shows that $Y^{(k)}$ is, indeed,  coatom in $\mathcal{R}(n)$.

Now let us show that every coatom  in $\mathcal{R}(n)$ is of one of the above forms.
Let $Y\in \mathcal{R}(n)$ be a coatom and set $M:=\mathcal{Q}(n)\setminus Y$.
Then $M\neq\varnothing$. Choose $(a,b)\in M$ with $a+b$ minimal.
We claim that either  $a=0$ or $b=0$. Indeed, if $a>0$ and $b>0$,
then both $(a-1,b)$ and $(a,b-1)$ are in $Y$, so 
Condition~$(\star)$ forces $(a,b)\in Y$, a contradiction.

By symmetry, we may therefore assume that our minimal missing point 
is of the form $(k,0)$ with $0\le k\le n$. Assume that the missing point is
$(0,0)$. We know that, adding $(0,0)$, we get a new element in 
$\mathcal{R}(n)$. Since $Y$ is a coatom, this forces the equality
$Y=\mathcal{Q}(n)\setminus\{(0,0)\}$. From 
Condition~$(\star)$, we have $\mathcal{Q}(n)\setminus\{(0,0)\}\in 
\mathcal{R}(n)$ only if $n=0$ and this case is excluded from our consideration.
Therefore our minimal missing point 
is of the form $(k,0)$ with $1\le k\le n$. 

If $k=n$, then $Y\subset Y^{(n)}$ and since both $Y$ and $Y^{(n)}$ are coatoms,
we obtain $Y=Y^{(n)}$. Therefore it remains to consider the case $k<n$.
In this case let us assume that $Y\not\subset Y^{(k)}$, that is, there is
some minimal $m$ such that $(k,m)\in Y$. In other words,
we know that $(k,m)\in Y$, for some $0<m\leq n-k$, and that 
$(k,i)\not\in Y$, for all $0\leq i<m$. From Condition~$(\star)$
it follows that $(n+1-m,i)\not\in Y$, for all $0\leq i<m$.
Here is the illustration:
\begin{displaymath}
\xymatrix@R=0.3mm@C=0.3mm{
\bullet&&&&&&&\\
\bullet&\bullet&&&&&&\\
\bullet&\bullet&\bullet&&&&&\\
\bullet&\bullet&\bullet&\bullet&&&&\\
\bullet&\bullet&{\color{violet}1}&\bullet&\bullet&&&\\
\bullet&\bullet&{\color{teal}0}&{\color{cyan}0}
&{\color{cyan}0}&{\color{cyan}0}&&\\
\bullet&\bullet&{\color{teal}0}&{\color{cyan}0}
&{\color{cyan}0}&{\color{cyan}0}&\bullet&\\
\bullet&\bullet&{\color{orange}0}&{\color{cyan}0}&
{\color{cyan}0}&{\color{cyan}0}&\bullet&\bullet\\
}
\end{displaymath}
Here the {\color{orange}orange} $0$ is the point $(k,0)$
and the {\color{violet}violet} $1$ is the point $(k,m)$
so that we have the {\color{teal}teal} $0$'s in between.
The {\color{cyan}cyan} part is then forced by Condition~$(\star)$
and the rightmost cyan column is exactly $\mathtt{C}(n,n+1-m)$.
Therefore $Y\subset Y^{(n+1-m)}$ and, consequently,
$Y=Y^{(n+1-m)}$ due to our assumption that $Y$ is a coatom.
This completes the proof.
\end{proof}

\subsection{Meet irreducible elements}\label{s12.5}

Since $\mathcal{R}(n)$ is finite, an element $Y\in \mathcal{R}(n)$ is
{meet-irreducible} if and only if $Y\neq \mathcal{Q}(n)$ and $Y$ 
has a unique cover. For arbitrary integers $1\le a\le b\le n$, define
\[
V_{a,b}:=\{(x,y)\in \mathcal{Q}(n): a\le x\le b,\ 0\le y\le n-b\}.
\]
For integers $0\le c\le d\le n$, define
\[
H_{c,d}:=\{(x,y)\in \mathcal{Q}(n): 0\le x\le n-d,\ c\le y\le d\}.
\]
Thus $V_{a,b}$ is a rectangle resting on the $x$-axis whose 
upper-right corner lies on the
boundary line $x+y=n$, while $H_{c,d}$ similarly 
rests on the $y$-axis.

\begin{theorem}\label{thm:meetirr}
The meet-irreducible elements of $\mathcal{R}(n)$ are exactly the sets
\[
\mathcal{Q}(n)\setminus V_{a,b}\qquad (1\le a\le b\le n)
\quad
\text{ and }
\quad
\mathcal{Q}(n)\setminus H_{c,d}\qquad (0\le c\le d\le n).
\]
In particular, the number of meet-irreducible elements is
$(n+1)^2$.
\end{theorem}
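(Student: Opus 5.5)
The plan is to use the standard description of meet-irreducible elements in a closure system. By Subsection~\ref{s12.1}, $\mathcal{R}(n)$ is the family of closed sets of the closure operator $\overline{\cdot}$. Hence $Y\in\mathcal{R}(n)$ is meet-irreducible if and only if there is a point $p\in\mathcal{Q}(n)\setminus Y$ such that $Y$ is a maximal element of $\{Z\in\mathcal{R}(n): p\notin Z\}$. Equivalently, $p\in\overline{Y\cup\{q\}}$ for every $q\notin Y$, and then the unique cover of $Y$ is $\overline{Y\cup\{p\}}$. So the theorem reduces to two tasks: determine, for each $p$, the maximal closed sets avoiding $p$, and check that the resulting list is exactly the one in the statement. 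I will also use the transposition $(x,y)\mapsto(y,x)$, which preserves Condition~$(\star)$ and hence acts on $\mathcal{R}(n)$. It sends $V_{a,b}$ to $H_{a,b}$ for $a\ge 1$. So only the $H_{0,d}$, which contain the origin, need separate treatment. The count is then immediate: there are $n(n+1)/2$ sets $V_{a,b}$ and $(n+1)(n+2)/2$ sets $H_{c,d}$, and these sets are pairwise distinct, giving $(n+1)^2$.

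\textbf{Step 1 (membership).} I will show $\mathcal{Q}(n)\setminus V_{a,b}\in\mathcal{R}(n)$, arguing as in the first part of the proof of Proposition~\ref{prop:coatoms}. Suppose both $(x,y)$ and $(u,v)$ lie outside $V_{a,b}$ and satisfy $\max(x,u)+\max(y,v)\le n+1$. I check case by case (whether $x<a$, $x>b$, or $y>n-b$) that neither the coordinatewise min nor the coordinatewise max can land in the rectangle. The key point is that a max landing in $V_{a,b}$ would need the first coordinate in $[a,b]$ and the second at most $n-b$, and the triggering inequality rules out mixing a point with $x>b$ and a point with $y>n-b$. The same argument, or transposition, handles $H_{c,d}$.

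\textbf{Step 2 (each listed set is meet-irreducible).} For $Y=\mathcal{Q}(n)\setminus V_{a,b}$, I take the distinguished point to be a corner of the rectangle; my first guess is $p=(a,n-b)$. I will show that $\overline{Y\cup\{q\}}\ni p$ for any $q=(x,y)\in V_{a,b}$. This uses the two propagation moves from the coatom proof. The min-move with $(x+1,y-1)$ lowers $y$, or shifts $x$ when combined with points just outside the rectangle to the left or right. The max-move with $(x-1,y+1)$ raises $y$. I apply these moves with auxiliary points chosen outside $V_{a,b}$, which therefore lie in $Y$, and walk $q$ to $p$ inside the rectangle. If walking to $(a,n-b)$ fails for some $q$, I will instead try the corner $(b,n-b)$ on the boundary line, or show that the whole of $V_{a,b}$ is generated. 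The case $H_{0,d}$, which contains the origin, is handled similarly using moves toward $(0,c)$ or $(n-d,d)$.

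\textbf{Step 3 (conversely, the main obstacle).} Let $Y$ be maximal among closed sets avoiding some $p$, and put $M=\mathcal{Q}(n)\setminus Y$. I must show $M$ is some $V_{a,b}$ or $H_{c,d}$. As in the coatom proof, a minimal point of $M$ in the product order must lie on an axis, since otherwise its two lower neighbours in $Y$ would force it into $Y$. I then intend to prove that $M$ is a down-set within the relevant rectangle, and that its shape is forced: rows (resp.\ columns) of $M$ are intervals, and Condition~$(\star)$ propagates missing points as in the cyan region of the coatom proof. Concretely, I will take $a$ minimal and $b$ maximal with $(a,0),(b,0)\in M$ (or use the transposed version). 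I then show $M\supseteq V_{a,b}$ by the propagation arguments. Next, $\mathcal{Q}(n)\setminus V_{a,b}$ is closed by Step 1 and avoids $p$, and it contains $Y$; maximality of $Y$ then forces equality. I expect this step to be the main difficulty. The delicate part is ruling out configurations where $M$ has points on both axes, or where $p$ is not a corner, and showing that they contradict maximality. To do this, I will exhibit a strictly larger closed set avoiding $p$, obtained by adding a suitable extreme point of $M$ and checking Condition~$(\star)$ locally.
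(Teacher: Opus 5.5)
Your overall framework is sound. $Y$ is meet-irreducible exactly when it is maximal among closed sets avoiding some point $p$, and transposition reduces the $H_{c,d}$ with $c\ge1$ to the $V_{a,b}$. Step~1 and the count are fine.

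In Step~2 your first guesses $p=(a,n-b)$, and $p=(0,0)$ for $H_{0,d}$, are the right distinguished points, but the moves need care. For most $q=(x,y)\in V_{a,b}$, the diagonal neighbours $(x+1,y-1)$ and $(x-1,y+1)$ from the coatom proof lie in $V_{a,b}$ themselves, so they are not available in $Y$. Moving $q$ left requires the points $(x-1,n-b+1)$ just above the rectangle, as in the paper; points to its left or right only move $q$ vertically. Your fallback $(b,n-b)$ would be wrong: for $a<b$, the closed set $\mathcal{Q}(n)\setminus V_{a+1,b}$ properly contains $Y$ and avoids $(b,n-b)$.

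\textbf{The gap is Step~3.} You choose $a,b$ from the extreme bottom-row points of $M$, which has nothing to do with $p$. You then need two facts that you never prove: $V_{a,b}\subseteq M$, and $p\in V_{a,b}$.
\begin{itemize}
\item The second is exactly the claim that $\mathcal{Q}(n)\setminus V_{a,b}$ avoids $p$. Without it, maximality yields nothing.
\item The first is not a consequence of $(\star)$ alone. Absence of points propagates only from suitably placed points known to lie in $Y$, like $(k,m)$ in the coatom proof. So it would have to come from maximality, and you give no mechanism for that.
\end{itemize}
Your proposed exclusions also fail. Configurations where $M$ meets both axes cannot be ruled out: for $\mathcal{Q}(n)\setminus H_{0,d}$ the set $M$ contains $(0,0)$, $(n-d,0)$ and $(0,d)$. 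Adding a single extreme point of $M$ to $Y$ generally destroys $(\star)$, because covers can add whole column segments. For example, the unique cover of $\mathcal{Q}(n)\setminus V_{a,b}$ adds all of $\{(a,j):0\le j\le n-b\}$.

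\textbf{The missing idea is to build the rectangle around $p=(x_0,y_0)$ itself.}
\begin{itemize}
\item Suppose $Y$ contained some $(x_0,y)$ with $y<y_0$ and some $(x,y_0)$ with $x<x_0$. Their coordinatewise maximum is $p$, with trigger sum $x_0+y_0\le n$, so $p\in Y$, a contradiction. Hence either the part of $p$'s column below $p$ lies in $M$, or the part of its row to the left of $p$ does.
\item Column case with $x_0\ge1$. Let $h$ be the least height with $(x_0,h)\in Y$, or $h=n-x_0+1$ if there is none, and put $b=n+1-h$. Then $x_0\le b\le n-y_0$, so $p\in V_{x_0,b}$.
\item Also $V_{x_0,b}\subseteq M$. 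Suppose $(x_1,y_1)\in Y$ with $x_0<x_1\le b$ and $y_1<h$. Such $x_1$ exists only if $h\le n-x_0$, so $(x_0,h)\in Y$. These two points trigger $(\star)$, since the sum is $x_1+h\le n+1$. Then $(x_0,y_1)\in Y$, contradicting the choice of $h$.
\item The row case, which includes $x_0=0$, is the transpose. It gives $H_{y_0,\,n+1-w}\subseteq M$ containing $p$, where $w$ is the least $x$ with $(x,y_0)\in Y$, or $w=n-y_0+1$ if there is none.
\end{itemize}
For the resulting rectangle $T$, the set $\mathcal{Q}(n)\setminus T$ is closed by Step~1, contains $Y$ and avoids $p$. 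Maximality then forces $Y=\mathcal{Q}(n)\setminus T$, and no configurations need to be excluded.

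This is shorter than the paper's Step~3. The paper instead classifies $M$ by its maximal elements, which all lie on $x+y=n$. A unique maximal element is handled by a hook analysis, and two maximal elements are separated by deleting whole rows or columns.
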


The statement of this theorem is due to Microsoft Copilot,
however, the ``proof'' which Copilot produced contained several
serious gaps and required a significant amount of additional work. 
At the same time, the general strategy of the proof follows the one proposed
by Copilot.

\begin{proof}
We split the proof into a number of steps.

\textbf{Step 1: the displayed sets belong to $\mathcal{R}(n)$.}
Fix some $1\le a\le b\le n$ and consider the set 
$Y:=\mathcal{Q}(n)\setminus V_{a,b}$. Let $(x,y),(u,v)\in Y$ and suppose
that we have the inequality $\max(x,u)+\max(y,v)\le n+1$.
We need to show that the two coordinate-wise 
extrema also lie in $Y$ whenever Condition~$(\star)$ applies.

The only way the coordinate-wise minimum or 
maximum could fail to lie in $Y$ would be for that point
to lie in the deleted rectangle $V_{a,b}$. 
But any point of $V_{a,b}$ has first coordinate in the
interval $[a,b]$ and second coordinate at most $n-b$. 
Since every point of $Y$ lies either strictly
left of the strip $a\le x\le b$, or strictly right of 
it, or strictly above the height $n-b$, a
coordinate-wise minimum or maximum of two points of $Y$ 
cannot enter the rectangle $V_{a,b}$ unless
one of the two original points was already in $V_{a,b}$.
Hence $Y\in \mathcal{R}(n)$. The verification for 
$\mathcal{Q}(n)\setminus H_{c,d}$ is similar.

\textbf{Step 2: each displayed set is meet-irreducible.}
We prove the claim for the vertical family 
$\mathcal{Q}(n)\setminus V_{a,b}$.
For the horizontal family 
$\mathcal{Q}(n)\setminus H_{c,d}$, 
the proof is similar. 

Let
\[
Y:=\mathcal{Q}(n)\setminus V_{a,b}\qquad (1\le a\le b\le n).
\]
Consider the set
\[
S_a:=\{(a,j):0\le j\le n-b\},
\]
which is the left-hand column of the deleted rectangle. We claim that
\[
Y^+:=Y\cup S_a
\]
belongs to $\mathcal{R}(n)$ and is the unique cover of $Y$.

That $Y^+\in \mathcal{R}(n)$ follows from Step~1 and the observation
that $Y^+=\mathcal{Q}(n)\setminus V_{a+1,b}$, if $a<b$, while, for $a=b$, 
we have $Y^+=\mathcal{Q}(n)$.

Now let $Z\in \mathcal{R}(n)$ satisfy $Y\subsetneq Z$. 
Then $Z$ contains at least one point of the deleted
rectangle $V_{a,b}$. We show that necessarily $S_a\subseteq Z$.
Let $(u,v)\in Z\cap V_{a,b}$.

If $u>a$, consider the point $(u-1,n-b+1)$. 
This point belongs to $Y$ (it lies just above the
deleted rectangle), and
\[
\max(u,u-1)+\max(v,n-b+1)=u+n-b+1\le b+n-b+1=n+1.
\]
Therefore Condition~$(\star)$ forces
\[
(u-1,v)=\min\bigl((u,v),(u-1,n-b+1)\bigr)\in Z.
\]
Repeating this step moves us left until we reach $(a,v)\in Z$.

\item From $(a,v)\in Z$, if $v<n-b$ then the point $(a-1,v+1)$ belongs to $Y$ (here we use $a\ge1$), and
\[
\max(a,a-1)+\max(v,v+1)=a+v+1\le a+(n-b)+1\le n+1.
\]
Hence Condition~$(\star)$ forces
\[
(a,v+1)=\max\bigl((a,v),(a-1,v+1)\bigr)\in Z.
\]
Thus we can propagate upward and obtain $(a,j)\in Z$ for every $j$ with $v\le j\le n-b$.

Similarly, if $v>0$, then $(b+1,v-1)\in Y$ when $b<n$, and
\[
\max(a,b+1)+\max(v,v-1)=b+1+v\le b+1+(n-b)=n+1,
\]
so Condition~$(\star)$ yields $(a,v-1)\in Z$. Repeating, 
we obtain $(a,j)\in Z$ for all $0\le j\le v$.
When $b=n$, necessarily $v=0$, so there is nothing further to prove in this direction.

The above implies $S_a\subseteq Z$. Therefore every proper closed superset 
of $Y$ contains $Y^+=Y\cup S_a$. Since $Y^+\in \mathcal{R}(n)$ 
it thus follows that the cover $Y^+$ of $Y$ is unique and therefore
$Y$ is meet-irreducible.

\textbf{Step 3: every meet-irreducible has one of the displayed forms.}

Let $Y\in \mathcal{R}(n)$ be meet-irreducible and put
$M:=\mathcal{Q}(n)\setminus Y$.
Since $Y\ne \mathcal{Q}(n)$, the set $M$ is nonempty.
Because $Y$ is meet-irreducible, it has a unique cover, 
equivalently, for every $p\in M$, the set
$\overline{Y\cup\{p\}}\in \mathcal{R}(n)$
contains the same minimal proper closed extension of $Y$. 
We claim that this forces $M$ to be a single axis-anchored rectangle.

To prove our claim,
consider $M$ as a poset with respect to the usual partial order
$(x,y)\leq (u,v)$ if and only if $x\leq u$ and $y\leq v$.

Assume that $M$ contains a unique maximal element $(u,v)$.
Then we have that both $\{(x,y)\in\mathcal{Q}(n)\,:\, x>u\}$
and $\{(x,y)\in\mathcal{Q}(n)\,:\, y>v\}$ belong to $Y$.
If $u+v<n$, then $(u+1,v)\in Y$ and $(u,v+1)\in Y$ implying 
$(u,v)\in Y$ by Condition~$(\star)$, a contradiction.
Therefore $u+v=n$.

If $u=n$, then $v=0$ and hence the whole $M$ is contained in the 
bottom row of $\mathcal{Q}$. From Condition~$(\star)$ it follows
directly that $M$ must be a suffix of the bottom row and therefore
have the form $V_{j,n}$, for some $j$. Similarly,
if $v=n$ and $u=0$, we get $M=H_{j,n}$, for some $j$.

Now assume $u,v\neq n$ and thus also $u,v\neq 0$.
Let us now look at the hook
\begin{displaymath}
\{(0,v),(1,v),\dots,(u,v)\}\cup 
\{(u,v),(u,v-1),\dots,(u,0)\}.
\end{displaymath}
Because of our assumptions, the whole triangular
part of $\mathcal{Q}(n)$ above this hook is non-empty
and belongs to $Y$. Similarly, the whole triangular
part of $\mathcal{Q}(n)$ to the right of this hook is non-empty
and belongs to $Y$. 

Assume first that some $(j,v)$ from this hook is contained
in $Y$ and choose $j$ to be maximal with this property.
Applying Condition~$(\star)$ to all $(j,s)$,
for $0\leq s<j$ and the
left column of the triangular part of $\mathcal{Q}(n)$ 
to the right of our hook, we obtain that 
$Y$ contains the whole column which has the first coordinate $j$.
Since $Y$ also contains the row with second coordinate $v+1$,
it follows that $M$ is contained in $V_{j+1,u}$. 
At the same time, if $Y$ contains some point in $V_{j+1,u}$,
applying Condition~$(\star)$ to it and $(j,v)$ we get 
that $Y$ contains $(j',v)$, for $j'>j$, a contradiction.
Therefore $Y=\mathcal{Q}(n)\setminus V_{j+1,u}$ in this case.
Similarly, if $Y$ contains some point from the column in our
hook, we obtain that $Y=\mathcal{Q}(n)\setminus H_{j,v}$,
for some $j$.

Now consider the case when the whole hook is contained in $M$.
We want to prove that in this case $M=H_{0,v}$.
Assume, towards contradiction, that $(x,y)\in Y$ is 
such that $x<u$ and $y<v$. Applying Condition~$(\star)$
to $(x,y)$ and the upper and the right bounds of our hook, 
we deduce that  $(x',y')\in Y$, for all $0\leq x'\leq x$
and $0\leq y'\leq y$. It is easy to see that Condition~$(\star)$
implies existence of a unique $(x,y)\in Y$  
such that $x<u$ and $y<v$ and, moreover, such that
$(x',y')\in Y$ implies $x'\leq x$ and $y'\leq y$, for
all $x'<u$ and $y'<v$. This means that our $Y$ looks as follows:
\begin{displaymath}
\xymatrix@R=0.3mm@C=0.3mm{
1&&&&&&&\\
1&1&&&&&&\\
1&1&1&&&&&\\
1&1&1&1&&&&\\
1&1&1&1&1&&&\\
0&0&0&0&0&{\color{violet}0}&&\\
1&1&{\color{teal}1}&0&0&0&1&\\
1&1&1&0&0&0&1&1\\
}
\end{displaymath}
Here $(x,y)$ is the {\color{teal}teal} $1$
while $(u,v)$ is the {\color{violet}violet} $0$.
The important thing here is that we can add to our
South-East rectangle of $1$'s one more column {\color{magenta}on the right}
or one more column {\color{orange}above}:
\begin{displaymath}
\xymatrix@R=0.3mm@C=0.3mm{
1&&&&&&&\\
1&1&&&&&&\\
1&1&1&&&&&\\
1&1&1&1&&&&\\
1&1&1&1&1&&&\\
0&0&0&0&0&{\color{violet}0}&&\\
1&1&{\color{teal}1}&{\color{magenta}1}&0&0&1&\\
1&1&1&{\color{magenta}1}&0&0&1&1\\
}\qquad
\xymatrix@R=0.3mm@C=0.3mm{
1&&&&&&&\\
1&1&&&&&&\\
1&1&1&&&&&\\
1&1&1&1&&&&\\
1&1&1&1&1&&&\\
{\color{orange}1}&{\color{orange}1}&{\color{orange}1}&0&0&{\color{violet}0}&&\\
1&1&{\color{teal}1}&0&0&0&1&\\
1&1&1&0&0&0&1&1\\
}
\end{displaymath}
This produces two elements in $\mathcal{R}(n)$
whose meet is $Y$, which means that $Y$ is not meet irreducible.
Hence we deduce that $Y=\mathcal{Q}(n)\setminus H_{0,v}$.

Assume now that $M$ contains at least two different
maximal elements $p:=(f,g)$ and $q:=(s,t)$ with respect to our 
partial order. Similarly to the above, we have $f+g=s+t=n$.
In particular, $f\neq s$ and $g\neq t$.
We can choose $p$ and $q$ such that $f$ is minimal possible
while $s$ is maximal possible. Define:
\begin{itemize}
\item $R_p$ to be the row of $\mathcal{Q}(n)$ containing $p$;
\item $R_q$ to be the row of $\mathcal{Q}(n)$ containing $q$;
\item $C_p$ to be the column of $\mathcal{Q}(n)$ containing $p$;
\item $C_q$ to be the column of $\mathcal{Q}(n)$ containing $q$.
\end{itemize}
Due to Condition~$(\star)$, we either have
$R_p\subset M$ or $C_p\subset M$ (or both)
and also we either have
$R_q\subset M$ or $C_q\subset M$ (or both).

Assume first that $R_p\subset M$ and $C_q\subset M$.
Define $Y_q:=\mathcal{Q}(n)\setminus R_p$
and $Y_p:=\mathcal{Q}(n)\setminus C_q$.
Both are, clearly, in $\mathcal{R}(n)$.
By construction, we also have 
$Y\subset Y_p$, $Y\subset Y_q$,
$p\in Y_p$, $q\in Y_q$, $p\not\in Y_q$
and $q\not\in Y_p$.
This implies that $Y$ has at least two different covers and hence
is not meet-irreducible. 

Assume next that $R_p\subset M$ but $C_q\not\subset M$.
Then $R_q\subset M$.
Define $Y_p:=\mathcal{Q}(n)\setminus R_q$
and $Y_q:=\mathcal{Q}(n)\setminus R_p$.
Both are in $\mathcal{R}(n)$.
By construction, we have 
$Y\subset Y_p$, $Y\subset Y_q$,
$p\in Y_p$, $q\in Y_q$, $p\not\in Y_q$
and $q\not\in Y_p$.
Hence $Y$ has at least two different covers and thus
is not meet-irreducible.  Similarly one deals with the case 
$R_p\not\subset M$ and $C_q\subset M$.

Finally, consider the case of both $R_p\not\subset M$ but $C_q\not\subset M$.
Then $C_p\subset M$ and $R_q\subset M$.
Define $Y_p:=\mathcal{Q}(n)\setminus R_q$
and $Y_q:=\mathcal{Q}(n)\setminus C_p$.
Both are, clearly, in $\mathcal{R}(n)$.
By construction, we also have 
$Y\subset Y_p$, $Y\subset Y_q$,
$p\in Y_p$, $q\in Y_q$, $p\not\in Y_q$
and $q\not\in Y_p$.
This implies that $Y$ has at least two different covers and hence
is not meet-irreducible.  

This completes the proof of the fact
that the elements in the formulation of our theorem are exactly 
the meet-irreducible elements.

\textbf{Step 4: enumeration.}
Note that 
the vertical and horizontal families are disjoint with the chosen 
parameter ranges. Indeed, the vertical family is indexed by 
$1\le a\le b\le n$, while the horizontal family is indexed by $0\le c\le d\le n$.
Hence the count of meet-irreducibles is exactly
\[
\frac{n(n+1)}2+\frac{(n+1)(n+2)}2=(n+1)^2.
\]
This completes the proof.
\end{proof}

\vspace{2mm}

\noindent
Department of Mathematics, Uppsala University, Box. 480,
SE-75106, Uppsala,\\ SWEDEN, email: {\tt mazor\symbol{64}math.uu.se}

\newpage

\section{Appendix: The SageMath code}

Here is the SageMath code written by Copilot for computation 
of $R_n$:

\begin{tcolorbox}[colback=gray!10,colframe=gray!50]
\begin{verbatim}
def compute_abRP(Max):
    A = {}
    B = {}

    for n in range(-2, Max + 1):
        A[n] = {}
        B[n] = {}

    A[-2][0] = ZZ(1)
    A[-1][0] = ZZ(1)
    A[0][0]  = ZZ(1)
    A[0][1]  = ZZ(1)

    B[0][1] = ZZ(1)

    def a_get(n, k):
        if k < 0 or n < -2:
            return ZZ(0)

        if n >= 0 and k > n + 1:
            return ZZ(0)

        if n < 0 and k != 0:
            return ZZ(0)

        return A.get(n, {}).get(k, ZZ(0))

    def b_get(n, k):
        if k <= 0:
            return ZZ(0)
        if n < 0:
            return ZZ(0)
        if k > n + 1:
            return ZZ(0)

        return B.get(n, {}).get(k, ZZ(0))

    def R(n):
        if n < 0:
            return a_get(n, 0)
        return sum(a_get(n, k) for k in range(0, n + 2))

    def P(n):
        if n < 0:
            return ZZ(0)
        return sum(b_get(n, k) for k in range(1, n + 2))
\end{verbatim}
\end{tcolorbox}
\newpage

\begin{tcolorbox}[colback=gray!10,colframe=gray!50]
\begin{verbatim}

    for n in range(1, Max + 1):

        for k in range(1, n + 2):
            val = a_get(n - 1, k - 1)

            extra = ZZ(0)
            for m in range(1, n + 1):
                upper_r = min(m - 1, k - 1)
                for r in range(0, upper_r + 1):
                    aval = a_get(m - 2, r)
                    if aval == 0:
                        continue

                    p_start = k - 1 - r
                    p_end   = n - m + 1

                    inner = ZZ(0)
                    for p in range(p_start, p_end + 1):
                        inner += b_get(n - m, p)

                    extra += aval * inner

            val += extra
            if val != 0:
                B[n][k] = val

        A[n][0] = R(n - 1)

        for k in range(1, n + 2):
            val = b_get(n, k)

            extra = ZZ(0)
            for j in range(0, n):
                bjk = b_get(j, k)
                if bjk == 0:
                    continue

                inner = ZZ(0)
                for i in range(0, n - j):
                    inner += a_get(n - j - 2, i)

                extra += bjk * inner

            val += extra
            if val != 0:
                A[n][k] = val

    return A, B, R, P
\end{verbatim}
\end{tcolorbox}
\newpage

\begin{tcolorbox}[colback=gray!10,colframe=gray!50]
\begin{verbatim}

def print_abRP(Max):
    A, B, R, P = compute_abRP(Max)

    print("Values for n = 0,...,%s" % Max)
    print("")

    for n in range(0, Max + 1):
        a_row = [A[n].get(k, ZZ(0)) for k in range(0, n + 2)]
        b_row = [B[n].get(k, ZZ(0)) for k in range(0, n + 2)]

        print("n = %s" % n)
        print("  a(%s, .) = %s" % (n, a_row))
        print("  b(%s, .) = %s" % (n, b_row))
        print("  R(%s) = %s" % (n, R(n)))
        print("  P(%s) = %s" % (n, P(n)))
        print("")

Max = 10
print_abRP(Max)
\end{verbatim}
\end{tcolorbox}
 

\begin{thebibliography}{999999}
%

\bibitem[BPS96]{BPS}
Brightwell, G.; Pr{\"o}mel, H.-J.; Steger, A.
The average number of linear extensions of a partial order.
J. Combin. Theory Ser. A {\bf 73} (1996), no. 2, 193--206.

\bibitem[CR15]{CR}
Clark, T.; Richmond, T.
The number of convex topologies on a finite totally ordered set.
Involve {\bf 8} (2015), no. 1, 25--32.

\bibitem[DS]{Sage}
{Developers, The~Sage}.
{S}agemath, the {S}age {M}athematics {S}oftware {S}ystem.
({V}ersion 10.9),
{2026}, {{\tt https://www.sagemath.org}}.

\bibitem[FS09]{FS}
Flajolet, P. and Sedgewick, R.
Analytic Combinatorics.
Cambridge University Press, 2009.

\bibitem[Ga72]{Ga}
Gabriel, P.
Unzerlegbare Darstellungen. I
Manuscripta Math. {\bf 6} (1972), 71–103; 
correction, ibid. {\bf 6} (1972), 309.

\bibitem[KS26]{KS}
Krause, H.; Stoye, B.
Multisets of finite intervals and a universal category of 
poset representations.
Preprint arXiv:2601.22649.

\bibitem[OEIS]{OEIS} 
The On-Line Encyclopedia of Integer Sequences.
OEIS Foundation Inc. (2026), www.oeis.org.

\end{thebibliography}
\end{document}